\documentclass[reqno,12pt,  letterpaper]{amsart}
\usepackage{amsmath,amssymb,amsthm,graphicx,mathrsfs,url}
\usepackage[usenames,dvipsnames]{color}
\usepackage[colorlinks=true,linkcolor=Red,citecolor=Green]{hyperref}


\usepackage{mathtools}
\DeclarePairedDelimiter\ceil{\lceil}{\rceil}

\def\?[#1]{\textbf{[#1]}\marginpar{\Large{\textbf{??}}}}

\setlength{\textheight}{8.50in} \setlength{\oddsidemargin}{0.00in}
\setlength{\evensidemargin}{0.00in} \setlength{\textwidth}{6.08in}
\setlength{\topmargin}{0.00in} \setlength{\headheight}{0.18in}
\setlength{\marginparwidth}{1.0in}
\setlength{\abovedisplayskip}{0.2in}
\setlength{\belowdisplayskip}{0.2in}
\setlength{\parskip}{0.05in}

\DeclareGraphicsRule{*}{mps}{*}{}

\newtheorem{thm}{Theorem}
\newtheorem{prop}{Proposition}[section]

\newtheorem{lem}[prop]{Lemma}
\newtheorem{cor}[prop]{Corollary}
\newtheorem{rem}[prop]{Remark}
\numberwithin{equation}{section}

\DeclareMathOperator{\supp}{supp}

\newcommand{\mc}{\mathcal}
\newcommand{\rr}{\mathbb{R}}

\newcommand{\cc}{\mathbb{C}}

\newcommand{\la}{\lambda}
\newcommand{\eps}{\varepsilon}

\newcommand{\pl}{\partial}
\newcommand{\x}{\times}

\newcommand{\bbar}{\overline}

\newcommand{\cjd}{\rangle}
\newcommand{\cjg}{\langle}

\newcommand{\ndemi}{\frac{n}{2}}

\newcommand\jap[1]{\langle #1 \rangle}

\renewcommand\Re{\mathrm{Re}\,  }

\newcommand{\R}{\mathbb{R}}

\newcommand{\C}{\mathbb{C}}
\newcommand{\N}{\mathbb{N}}

\newcommand{\RR}{\mathbb{R}}

\newcommand{\CC}{\mathbb{C}}
\newcommand{\NN}{\mathbb{N}}
\newcommand\Id{\operatorname{Id}}
\newcommand\sskkip[1]{}

\renewcommand{\Re}{\hbox{Re}\,}
\renewcommand{\Im}{\hbox{Im}\,}

\newcommand{\p}{\partial}


\newcommand\MMkb{M^2_{k, b}}

\newcommand{\SC}{\ensuremath{\mathrm{sc}}}

\newcommand\Lsharp{L^\#}

\newcommand{\zf}{\mathrm{zf}}
\newcommand{\bfo}{\mathrm{bf}_0}
\newcommand{\rbo}{\mathrm{rb}_0}
\newcommand{\lbo}{\mathrm{lb}_0}
\newcommand{\lb}{\mathrm{lb}}
\newcommand{\rb}{\mathrm{rb}}

\newcommand{\bfc}{\mathrm{bf}}

\newcommand\mcB{\mathcal{B}}

\newcommand\Omegakbh{\Omega_{k,b}^{1/2}}

\newcommand{\Nscstar}[1][\mbox{}]{{^{\SC}N^*_{#1}}}
\newcommand\Diagb{\text{Diag}_b}

 
\title[Eigenvalue bounds for non-self-adjoint Schr\"odinger operators]{Eigenvalue bounds for non-self-adjoint Schr\"odinger operators with non-trapping metrics}

\author{Colin Guillarmou}
\address{Laboratoire de Math\'ematiques d'Orsay\\ 
Universit\'e Paris-Sud\\  
 Bat. 425, 91405 Orsay Cedex \\
France}
\email{colin.guillarmou@math.u-psud.fr}

\author{Andrew Hassell}
\address{Department of Mathematics\\
Australian National University\\
Canberra ACT 0200 Australia}
\email{hassell@maths.anu.edu.au}

\author{Katya Krupchyk}
\address{Department of Mathematics\\
University of California, Irvine\\ 
CA 92697-3875, USA }

\email{katya.krupchyk@uci.edu}

\begin{document}

\begin{abstract}

We study eigenvalues of non-self-adjoint Schr\"odinger operators on non-trapping asymptotically conic manifolds of dimension $n\ge 3$.  Specifically, we are concerned with the following two types of estimates. The first one deals with Keller type bounds on individual eigenvalues of the Schr\"odinger operator with a complex potential in terms of the $L^p$-norm of the potential, while the second one is a  Lieb--Thirring type bound controlling sums of powers of eigenvalues in terms of the $L^p$-norm of the potential. We extend the results of Frank (2011), Frank--Sabin (2017), and Frank--Simon (2017) on the Keller and Lieb--Thirring type bounds from the case of Euclidean spaces to that of non-trapping asymptotically conic manifolds. In particular, our results are valid for 
the operator $\Delta_g+V$ on $\rr^n$ with $g$ being a non-trapping compactly supported (or suitably short range) perturbation of the  Euclidean metric and $V\in L^p$ complex valued. 

\end{abstract}

\maketitle

\section{Introduction and statement of results}
The purpose of this paper is to establish bounds of Keller and Lieb--Thirring type for eigenvalues of non-self-adjoint Schr\"odinger operators on non-trapping asymptotically conic manifolds. Before stating our results, let us proceed to describe these two types of bounds in the more familiar Euclidean setting, motivating the significance of extending them to the case of asymptotically conic manifolds.  

\subsection{Keller and Lieb--Thirring type bounds in the Euclidean case}
Recently there have been numerous works devoted to the study of eigenvalues of  the Schr\"odinger operator  $\mathcal{P}=\Delta+V$ in $L^2(\R^n)$, with $\Delta$ being the nonnegative Laplace operator and $V$ being  a complex-valued potential.  Of particular interest here is the problem of obtaining quantitative information concerning the localization and distribution of the eigenvalues of $\mathcal{P}$ under the sole assumption that $V\in L^p(\R^n)$,  for some $1 \le p<\infty$.  Here we may remark that the spectrum of $\mathcal{P}$ in $\C\setminus [0,\infty)$ consists then of isolated eigenvalues of finite algebraic multiplicity, see \cite[Proposition B.2]{Frank_2015}. 

The following two types of results are of particular interest for this problem.   The first one deals with Keller type bounds on the individual eigenvalues of $\mathcal{P}$ in terms of the $L^ p$-norm of the potential, \cite{Keller_61}. If $V$ is real-valued, so that $\mathcal{P}$ admits a natural self-adjoint realization, then the eigenvalues of $\mathcal{P}$ in $\C\setminus [0,\infty)$ are negative and by the variational principle and Sobolev's inequalities, for any eigenvalue $\lambda<0$ of $\mathcal{P}$, we have the scale-invariant bounds, 
\begin{equation}
\label{eq_intro_1}
|\lambda|^{\gamma}\le C_{\gamma,n}\int_{\R^n} |V(x)|^{\gamma+\frac{n}{2}}dx
\end{equation}
for every $\gamma\ge\frac{1}{2}$ if $n=1$ and every $\gamma>0$ if $n\ge 2$. Here the constant $C_{\gamma,n}>0$ depends on $\gamma$ and $n$ only, see \cite{Frank_Simon}, \cite{Keller_61}, \cite{Lieb_Thirring}.   

If the potential $V$ is complex-valued, the problem is more involved due to the lack of variational techniques and the absence of a spectral resolution theorem. In dimension  $n=1$ the bound \eqref{eq_intro_1}  with  $\gamma=\frac{1}{2}$ was proved  by Abramov, Aslanyan, and Davies in \cite{Abramov_Aslanyan_Davies}. In dimensions  $n\ge 2$,  Frank \cite{Frank_2011} established  the bound \eqref{eq_intro_1} for  all eigenvalues $\lambda\in \C\setminus [0,\infty)$ and for all  $0<\gamma\le \frac{1}{2}$, see also \cite{Frank_Simon}. The work \cite{Frank_2015} gives a replacement of the bound  \eqref{eq_intro_1}  for all $\gamma>\frac{1}{2}$. 
We refer to  \cite{Cuenin}, \cite{Cuenin_Kenig}, \cite{Enblom_A},  \cite{Laptev_Safronov}, \cite{Mizutani_critical}, for some other recent works on bounds on the individual eigenvalues for non-self-adjoint operators of Schr\"odinger type.

The second type of result is concerned with bounds on sums of powers of absolute values of eigenvalues of $\mathcal{P}$, generalizing the 
classical Lieb--Thirring bounds \cite{Lieb_Thirring}  to the non-self-adjoint case. If $V$ is real-valued then the Lieb--Thirring  inequality has the following form,
\begin{equation}
\label{eq_intro_2}
\sum |\lambda|^{\gamma}\le C_{\gamma,n}\int_{\R^n}V_-(x)^{\gamma+\frac{n}{2}}dx,
\end{equation}
where $V_-=\max(-V,0)$,  $\gamma\ge \frac{1}{2}$ if $n=1$, $\gamma>0$ if $n=2$, and $\gamma\ge 0$ if $n\ge 3$.   The summation in the left hand side in \eqref{eq_intro_2} extends over all negative eigenvalues of $\mathcal{P}$, counted with their multiplicities.  The situation in the non-self-adjoint case is less clear. In particular, B\"ogli \cite{Bogli_Sabine}  established that for any $p>n$, there exists  a non-real potential $V\in L^p(\R^n)\cap L^\infty(\R^n)$ such that the Schr\"odinger operator $\mathcal{P}$ has infinitely many non-real eigenvalues accumulating at every point of the essential spectrum $[0,\infty)$, thus showing that inequalities like \eqref{eq_intro_2} cannot hold in the non-self-adjoint case for $p>n$.  A possible modification of Lieb--Thirring's inequality \eqref{eq_intro_2} to the non-self-adjoint case was suggested in  \cite{Demuth_Hansmann_Katriel_LT}, and is as follows, 
\begin{equation}
\label{eq_intro_2_new}
\sum \frac{d(\lambda)^{\gamma+\frac{n}{2}}}{|\lambda|^{\frac{n}{2}}}\le C_{\gamma,n}\int_{\R^n}|V(x)|^{\gamma+\frac{n}{2}}dx, 
\end{equation}
where
\begin{equation}
\label{eq_intro_2_new_d}
d(\lambda)=\text{dist}(\lambda, [0,\infty)).
\end{equation}
We refer to \cite{Demuth_Hansmann_Katriel_2009},  \cite{Demuth_Hansmann_Katriel_2app}, \cite{Frank_Laptev_Lieb_Seiringer}, \cite{Frank_Sabin}, \cite{Sambou}  for some of the important contributions to generalizations of Lieb--Thirring's inequality \eqref{eq_intro_2}  to the setting of complex potentials.

A crucial idea of Frank \cite{Frank_2011} in establishing bounds \eqref{eq_intro_1} on the individual eigenvalues of the Schr\"odinger operator  $\mathcal{P}$ with a complex-valued potential was to make use of the uniform $L^p$ resolvent estimates for $\Delta$ of Kenig, Ruiz, Sogge \cite{Kenig_Ruiz_Sogge}. Recently, this approach was extended to the case of  non-self-adjoint Schr\"odinger operators with  inverse-square potentials by Mizutani \cite{Mizutani}, to the case of magnetic Schr\"odinger and Pauli operators with complex electromagnetic potentials by Cuenin and Kenig  \cite{Cuenin_Kenig}, and to the case of the Dirac and fractional Schr\"odinger operators with complex potentials by Cuenin \cite{Cuenin}.

Developing the idea of Frank \cite{Frank_2011}  further, Frank and Sabin  \cite{Frank_Sabin} obtained some very interesting uniform weighted bounds for the resolvent of $\Delta$ in suitable Schatten classes, and applied these bounds to derive uniform estimates on the sums of eigenvalues of non-self-adjoint Schr\"odinger operators, thus obtaining some results towards proving  the conjectured Lieb--Thirring inequality \eqref{eq_intro_2_new} in the case of complex potentials.  Recently, this approach was extended  by Cuenin \cite{Cuenin} to the case of the Dirac and fractional Schr\"odinger operators with complex potentials. 

\subsection{Asymptotically conic manifolds}
Notice that in all the works described above the principal part of the operators considered has constant coefficients. It is nevertheless of significant interest to extend both types of results to the case of complex potential perturbations of the  Laplace--Beltrami operator $\Delta_g$ considered on $\R^n$ or more generally, on a  class of complete non-compact Riemannian manifolds.   

The class we consider here is the class of \emph{asymptotically conic manifolds}, whose Riemannian metric outside a compact set is asymptotic to the end of a metric cone. Metric cones are Riemannian manifolds of the form $N \times (0, \infty)_r$ with metric $dr^2 + r^2 G$, for some metric $G$ on $N$. They were studied by Cheeger \cite{Ch} and Cheeger-Taylor \cite{CT} but have a long history going back to Sommerfeld \cite{Sommerfeld}. As defined by Melrose \cite{Melrose} (who used the term `scattering metric'), 
$(M,g)$ is \emph{asymptotically conic} if $M$ is the interior of a smooth compact manifold with boundary $\overline{M}$ and $g$ is a smooth metric on $M$ satisfying the following property: 
there exists a smooth boundary defining function\footnote{A boundary defining function is a non-negative smooth function $x$ such that $\pl\bbar{M}=x^{-1}(\{0\})$ and $dx|_{\pl M}$ does not vanish on $\pl \bbar{M}$.} $x$ on $\overline{M}$ such that $(M,g)$ is isometric outside a compact set to a collar $(0,\epsilon)_x\times \p \overline{M}$ equipped with the metric of the form  
\begin{equation}
\label{asym_con_metric_intr_1}
\frac{dx^2}{x^4}+\frac{h(x)}{x^2}=\frac{dx^2}{x^4}+\frac{\sum_{j,k}h_{jk}(x,y)dy_jdy_k}{x^2}
\end{equation}
for some smooth one-parameter family of metrics $h$ on the boundary $\p \overline{M}$.  If $y=(y_1,\dots, y_{n-1})$ stand for local coordinates on $\p  \overline{M}$ and $(x,y)$ are the corresponding local coordinates on $M$ near $\p \overline{M}$, the function $r=1/x$ near $x=0$ can be thought of as a ``radial" variable near infinity and $y=(y_1,\dots, y_{n-1})$ can be regarded as $n-1$ ``angular" variables. Rewriting \eqref{asym_con_metric_intr_1} in the $(r,y)$ coordinates,  
\begin{equation}
\label{asym_con_metric_intr}
g=dr^2+ r^2 h(r^{-1})= dr^2+ r^2 \sum h_{jk}(r^{-1},y)dy^jdy^k,
\end{equation}
and we observe that the metric $g$ is asymptotic to an exact conic metric $dr^2+r^2h(0)$ on $(r_0,\infty)_r\times \p \overline{M}$ as $r\to \infty$.
The most important example of an asymptotically conic manifold is Euclidean space $M=\R^n$
equipped with a short range perturbation of the Euclidean metric $(\delta_{ij})$, which is of the form 
\begin{equation}
\label{eq_short_range}
g_{ij}=\delta_{ij}+|z|^{-2} k_{ij}\bigg(\frac{z}{|z|},\frac{1}{|z|}\bigg), \quad |z|\to \infty,
\end{equation}
where $k_{ij}$ are smooth on $\mathbb{S}^{n-1}\times [0,1)$ --- see \cite{Melrose_Zworski}.

 Let  $z=(z_1,\dots, z_n)$ be local coordinates away from $\p \overline{M}$. We say that $M$ is \emph{non-trapping} if every geodesic $z(s)$ in $M$ reaches $\p \overline{M}$ as $s\to \pm \infty$.  This places restrictions on the compactification $\bbar{M}$. 
For example, a compact perturbation of the Euclidean metric is non-trapping provided that it is sufficiently small in $C^2$ --- see \cite{HTW}. However, a non-trapping asymptotically conic metric $g$ may be far from asymptotically Euclidean. Indeed, there is such a non-trapping metric $g$ on $\mathbb{R}^n$ for every limiting metric $h(0)$ on the sphere $\mathbb{S}^{n-1}$, identified with $\pl \bbar{M}$ in this case.

In terms of the Weyl calculus, the symbol of the Laplacian for an asymptotically conic metric on $\R^n$ is in the calculus corresponding to the metric on $T^*\R^n$
$$
\frac{dz^2}{\jap{z}^2} + \frac{d\xi^2}{\jap{\xi}^2}.
$$
This class of symbols was studied by Parenti \cite{Parenti}, Cordes \cite{Cordes}, Schrohe \cite{Schrohe}, H\"ormander \cite[Equation (19.3.11) and Theorem 19.3.1$'$]{Ho3} and others. Melrose \cite{Melrose} adopted a different point of view, working from the outset on the compactification $\bbar{M}$ (which can be any manifold with boundary) and introducing the  \emph{scattering calculus} as the natural class of pseudodifferential operators associated with the \emph{scattering Lie algebra} of vector fields on $\bbar{M}$. He seems to have been the first to  exploit the fact that in this calculus one has propagation of singularities at spatial infinity \emph{at all finite frequencies}. Using the scattering calculus, the second author in collaboration with Vasy, Wunsch, the first author, and Sikora, worked out detailed properties of the spectral measure --- see \cite{GuHaSi_III, HaVa, HaWu2008}.

Let us remark on why we elect to work with the class of non-trapping asymptotically conic manifolds. 
On the one hand, it is a sufficiently \emph{general} class which  includes compactly supported or suitable short-range perturbations of Euclidean space as well as geometrically interesting examples such as metrics with strictly negative curvature, which are not present in the class of asymptotically Euclidean manifolds. On the other hand, it is sufficiently \emph{restricted} to allow to obtain detailed results on the resolvent and spectral measure, analogous in some sense to that for flat Euclidean space.

\subsection{Main results}

Throughout the paper, we let $(M,g)$ be an asymptotically conic non-trapping manifold of dimension $n\ge 3$.    
Since $g$ is complete, the Laplacian $\Delta_g$ associated with the metric $g$ is nonnegative self-adjoint on $L^2(M)$ with domain $H^2(M)$. The spectrum of $\Delta_g$ is purely absolutely continuous and is given by $\text{Spec}(\Delta_g)=[0,\infty)$: the absence of singular continuous spectrum follows for example from \cite{FrHi} using a Mourre estimate, and the absence of embedded $L^2$-eigenvalues follows from adapting \cite[Theorem 17.2.8]{Ho3} as in  \cite[Section 10]{Melrose}.

Our starting point is the following uniform $L^p$ resolvent estimates of the Kenig--Ruiz--Sogge type for the Laplace operator $\Delta_g$ on an asymptotically  conic non-trapping manifold, established in the work \cite{GuHa} of the first two authors. 

\begin{thm}
\label{thm_GuHa}
Let $(M,g)$ be an asymptotically conic non-trapping manifold of dimension $n\ge 3$.  Then for all $p\in [\frac{2n}{n+2}, \frac{2(n+1)}{n+3}]$, there is a constant $C>0$ such that for all $z\in \C$ and for all $f\in L^p(M)$, we have
\begin{equation}
\label{eq_KRS_manifold}
\| (\Delta_g -z)^{-1}f\|_{L^{p'}(M)}\le C|z|^{n(\frac{1}{p}-\frac{1}{2})-1}\|f\|_{L^p(M)}.
\end{equation}
Here  $\frac{1}{p}+\frac{1}{p'}=1$. 
\end{thm}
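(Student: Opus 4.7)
The plan is to rescale to a semiclassical problem and then combine a scattering-calculus parametrix with Stein--Tomas type estimates for the spectral measure of $\sqrt{\Delta_g}$. Setting $h = |z|^{-1/2}$ and $\sigma = z/|z|$, so that $\sigma$ lies on the unit circle in $\C$, the estimate \eqref{eq_KRS_manifold} is equivalent to the uniform bound
\begin{equation*}
\|(h^2 \Delta_g - \sigma)^{-1} f\|_{L^{p'}(M)} \le C\, h^{-n(\frac{2}{p} - 1)} \|f\|_{L^p(M)},
\end{equation*}
in $h > 0$ and in $\sigma$ on the unit circle. I would treat the high-energy regime $h \le 1$ and the low-energy regime $h \ge 1$ separately.

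In the high-energy regime, the strategy is to construct a global parametrix for $(h^2 \Delta_g - \sigma)^{-1}$ in Melrose's scattering calculus. The non-trapping assumption guarantees that bicharacteristics of the semiclassical principal symbol reach the scattering face at $\partial \overline M$ in finite time, so a combination of propagation of singularities in the interior and radial point estimates at $x=0$ yields a parametrix realised as a semiclassical Fourier integral operator. Dyadically decomposing around the characteristic set $\{|\xi|_g^2 = \sigma\}$ reduces the problem to an $L^p \to L^{p'}$ estimate for the semiclassical spectral measure of $\sqrt{\Delta_g}$ in a shell of width comparable to $h$. The sharp Stein--Tomas type bound
\begin{equation*}
\|dE_{\sqrt{\Delta_g}}(\lambda)\|_{L^p(M) \to L^{p'}(M)} \le C\, \lambda^{n(\frac{1}{p} - \frac{1}{p'}) - 1}, \quad p \in [1, \tfrac{2(n+1)}{n+3}],
\end{equation*}
can then be established by a $TT^*$ argument and stationary phase applied to this parametrix, in the spirit of Sogge's argument on compact manifolds but with careful treatment of the scattering end. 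The full resolvent estimate follows by integrating the spectral measure against the multiplier $(\mu^2 - z)^{-1}$ and using a Stein complex interpolation, as in the work of Kenig--Ruiz--Sogge and its sharpening by Frank.

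For $h \ge 1$, one splits into cases: if $z$ lies in a compact subset of $\C \setminus [0,\infty)$ then standard elliptic theory and Sobolev embedding give the bound with a constant independent of $z$; if instead $z \to 0$, one invokes the low-energy resolvent construction for scattering metrics together with Hardy--Sobolev estimates on the conic end, where the assumption $n \ge 3$ ensures that $\Delta_g^{-1} \colon L^p(M) \to L^{p'}(M)$ is bounded in the required range.

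The main obstacle I expect is obtaining the sharp spectral-measure bound uniformly as $\sigma$ approaches the positive real axis, which is where the resolvent becomes singular. It is precisely here that the non-trapping hypothesis is used decisively: propagation along the geodesic flow rules out any logarithmic loss in the limiting resolvent and produces polynomial norm bounds of exactly the correct order in $h$. A secondary but substantive technical difficulty is the matching of the interior Fourier integral parametrix with the boundary parametrix from the scattering calculus near $\partial \overline M$, so that the resulting estimates are genuinely uniform on the whole manifold and do not deteriorate at infinity.
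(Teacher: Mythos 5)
The paper does not prove this theorem: it is cited verbatim from \cite{GuHa}, the earlier work of Guillarmou and Hassell. Your proposal recovers the broad outline of that argument---semiclassical rescaling, spectral measure Stein--Tomas bounds from \cite{GHS}, a $TT^*$ duality argument, and complex interpolation---but there is a genuine gap at the step where the resolvent bound is deduced from the spectral measure bound. Integrating the spectral measure estimate against the multiplier $(\lambda^2-z)^{-1}$ gives the resolvent bound only when $z$ is bounded away from the positive real axis, where the integral is absolutely convergent (this is exactly Proposition~\ref{prop:resolvent-away-from-spectrum1} of the present paper, adapted to Schatten norms). The critical case, $z$ approaching $(0,\infty)$, is not handled by this step: the multiplier $(\lambda^2 - (z\pm i0))^{-1}$ is singular on the support of the spectral measure and the integral fails to converge absolutely, so the operator-norm bound on the spectral measure alone does not suffice.

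You invoke ``Stein complex interpolation, as in the work of Kenig--Ruiz--Sogge and its sharpening by Frank,'' but on $\R^n$ that argument runs through explicit pointwise kernel bounds for $(\Delta - z)^s$. On a general asymptotically conic manifold such pointwise bounds fail, because of conjugate points of the geodesic flow---this is precisely the obstruction described in Section~\ref{sec_strategy_main} of the present paper as the reason \eqref{eq_int_pointwise_res} has no manifold analogue. The decisive additional ingredient in \cite{GuHa} is the microlocal operator partition of unity $\Id = \sum_i Q_i(\eta)$ of \cite{GHS}, which splits the limiting resolvent into terms $Q_i(\eta)^*(\Delta_g - (z\pm i0))^{-1}Q_j(\eta)$. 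Depending on whether $Q_i(\eta)^*$ is not-incoming or not-outgoing related to $Q_j(\eta)$, or instead the microsupports are close, each piece is controlled either by rapid kernel decay (non-stationary phase along the bicharacteristic flow, using the non-trapping hypothesis) or by Stein interpolation of the analytic family $Q_i(\eta)^*\phi(\Delta_g/z)(\Delta_g - (z\pm i0))^s Q_j(\eta)$, using the explicit oscillatory structure of the microlocalized spectral measure kernel as in \eqref{eq_10_2_partition}--\eqref{eq_10_4_partition}. This microlocal partition of unity, rather than the scattering-calculus parametrix construction or the dyadic shell decomposition in frequency that you propose, is what makes the on-spectrum estimate go through; without it, your outline stalls exactly at the point you yourself identify as the main obstacle.
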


As explained in \cite{GuHa}, when $z\in (0,+\infty)$, the operator in \eqref{eq_KRS_manifold} may be taken to be either the outgoing or incoming resolvent $(\Delta_g-(z\pm i0))^{-1}$, defined by 
\[
(\Delta_g-(z\pm i0))^{-1}=\lim_{\delta\to 0^+}(\Delta_g-(z\pm i\delta))^{-1}
\] 
as a map $x^{1/2+\eps}L^2(M)\to x^{-1/2-\eps}L^2(M)$ for all 
$\eps>0$, where $x$ is the boundary defining function, thanks to   the limiting absorption principle,   see \cite{HaVa}, \cite{Melrose} for details.

The main technical contribution of the present paper is the following weighted uniform Schatten class estimate for the resolvent of $\Delta_g$, generalizing a result of Frank and Sabin \cite[Theorem 12]{Frank_Sabin}, obtained in the Euclidean setting.  This result is the key ingredient which allows us to extend the Lieb-Thirring type bounds of Frank-Sabin \cite{Frank_Sabin} and Frank \cite{Frank_2015} to our setting.
Below, $\mc{C}_q(L^2(M))$ denotes the Schatten space of order $q$ (see Section \ref{schattennormest} for definition).
\begin{thm}
\label{thm_resolvent_Schatten_laplacian}
Let $(M,g)$ be an asymptotically conic non-trapping manifold of dimension $n\ge 3$.   Let $p\in [\frac{n}{2}, \frac{n+1}{2}]$. 
Then there exists $C>0$  such that  for all $z\in \C\setminus \{0\}$ and all $W_1,W_2 \in L^{2p}(M)$, we have $W_1(\Delta_g-z)^{-1} W_2\in \mathcal{C}_{q}(L^2(M))$, $q=\frac{p(n-1)}{n-p}\in [n-1,n+1]$, and 
\begin{equation}
\label{toprove}
\|W_1(\Delta_g-z)^{-1} W_2\|_{\mathcal{C}_{q}(L^2(M))}\le C |z|^{-1+\frac{n}{2p}} \|W_1\|_{L^{2p}(M)} \|W_2\|_{L^{2p}(M)}.
\end{equation}
\end{thm}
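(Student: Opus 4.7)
My plan is to adapt the approach of Frank and Sabin \cite{Frank_Sabin} to the manifold setting, using Theorem~\ref{thm_GuHa} as the key new ingredient in place of the Euclidean Kenig--Ruiz--Sogge bound. The overall strategy is complex interpolation of an analytic family of operators in the Schatten scale, between an operator-norm endpoint supplied by Theorem~\ref{thm_GuHa} and a higher-Schatten endpoint obtained from kernel bounds.

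First I would establish the operator-norm (i.e., $\mathcal{C}_\infty$) version of \eqref{toprove} at $p=(n+1)/2$. Theorem~\ref{thm_GuHa} applied with exponent $r=2(n+1)/(n+3)$, followed by H\"older's inequality, yields
\[
\|W_1 (\Delta_g - z)^{-1} W_2\|_{L^2 \to L^2} \le C \, |z|^{-1/(n+1)} \|W_1\|_{L^{n+1}(M)} \|W_2\|_{L^{n+1}(M)}.
\]
Next, I would establish a complementary Schatten endpoint using a direct computation based on pointwise bounds for the resolvent kernel $K_z(x,y)$. The scattering-calculus description of $(\Delta_g-z)^{-1}$ from \cite{HaVa,GuHa} gives
\[
|K_z(x,y)| \le C \, |z|^{(n-2)/2} F\big(|z|^{1/2} d_g(x,y)\big),
\]
where $F(t) \sim t^{2-n}$ as $t\to 0$ and $F(t)$ decays like $t^{-(n-1)/2}$ as $t \to \infty$ along non-trapping geodesics. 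Inserting this into the Hilbert--Schmidt formula (or into a higher Schatten $\mathcal{C}_{q_0}$ norm via a Kato--Seiler--Simon-type bound for the functional calculus of $\Delta_g$), and estimating the resulting integral via H\"older / Hardy--Littlewood--Sobolev, yields the complementary endpoint.

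With the two endpoints in hand, Stein's analytic interpolation theorem applied to the family
\[
T_\zeta = |W_1|^{a(\zeta)} U_1 \, (\Delta_g - z)^{-1} \, U_2 |W_2|^{a(\zeta)},
\]
where $U_j$ are the phase factors in the polar decomposition of $W_j$ and $a$ is an appropriate affine function chosen so that the boundary lines of the strip correspond to the two endpoints above, delivers \eqref{toprove} on the full range $p \in [n/2, (n+1)/2]$.

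The main obstacle I anticipate is the Schatten endpoint opposite to $\mathcal{C}_\infty$. In dimension $n \ge 4$ the kernel $K_z$ is too singular on the diagonal for a direct Hilbert--Schmidt computation with weights merely in $L^{n+1}$; one must instead work with a higher Schatten $\mathcal{C}_{q_0}$ (for some $q_0 > 2$), or decompose the kernel into near-diagonal and far-from-diagonal pieces, to get useful bounds. A secondary issue is ensuring uniformity of the constants for $|z|$ both small and large, and handling the boundary behavior of $g$ near $\partial \overline{M}$; I expect this to be manageable via a scaling argument exploiting the asymptotically conic nature of $(M,g)$ together with the scattering-calculus tools of \cite{HaVa,GuHa}.
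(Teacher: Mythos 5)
Your strategy of interpolating between an operator-norm bound (from Theorem~\ref{thm_GuHa}) and a Hilbert--Schmidt/Schatten endpoint obtained from a pointwise kernel bound is conceptually the right shape of argument, and it is essentially what Frank--Sabin do in the Euclidean case. However, the Schatten endpoint you propose rests on a pointwise bound for the full resolvent kernel
\[
|K_z(m,m')| \le C\,|z|^{(n-2)/2}\, F\big(|z|^{1/2}d_g(m,m')\big),\qquad F(t)\sim t^{-(n-1)/2}\ \text{as}\ t\to\infty,
\]
uniformly in $z$ including $z$ near the positive real axis, and this bound is \emph{false} in general on a non-trapping asymptotically conic manifold. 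The obstruction is the presence of conjugate points for the geodesic flow: even without trapping, geodesics can refocus, and at conjugate points the stationary-phase decay rate $t^{-(n-1)/2}$ degrades. The paper points this out explicitly as the reason a direct transposition of \cite[Theorem 12]{Frank_Sabin} fails, and it is the whole reason the microlocal partition of unity $\sum_i Q_i(\eta) = \Id$ is introduced: the dispersive bound $(1+\lambda d(m,m'))^{-(n-1)/2}$ only holds after sandwiching the spectral measure between $Q_i(\eta)^*$ and $Q_i(\eta)$ from a single (or nearby) microlocal piece, as in \eqref{eq_sec_measure_1}. Without that microlocalization the far-from-diagonal decay you are invoking is unavailable.

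Two secondary points. First, in the ``main obstacle'' paragraph you correctly flag the near-diagonal singularity $t^{2-n}$ as too strong for a naive Hilbert--Schmidt estimate with $L^{n+1}$ weights when $n\ge 4$; the standard fix (used both by Frank--Sabin and by the paper) is to replace the resolvent by a holomorphic family of complex powers $(\Delta_g - z)^{s}$ with $\Re s$ ranging in $[-(n+1)/2,\,-(n-1)/2]$, and to apply the Stein interpolation of Proposition~\ref{prop:Frank-Sabin}. Your $T_\zeta$ family does something cruder (only varying the weights, not the power of the resolvent), which is not enough to tame the diagonal singularity. Second, because the microlocalized kernel bounds are only available for $z$ on the real axis (incoming/outgoing limits), the paper has to run a separate Phragm\'en--Lindel\"of argument to propagate the estimate from $\RR\setminus\{0\}$ to all of $\CC\setminus\{0\}$; your proposal silently assumes uniformity in $z$ across the whole cut plane, which is precisely what fails. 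The core missing idea is the microlocal partition of unity $Q_i(\eta)$ and the case analysis of Proposition~\ref{prop:partition of unity}, which is where all the genuine geometric work happens.
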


\begin{rem} When $z\in (0,+\infty)$, the operator in \eqref{toprove} may be taken to be either the outgoing or incoming resolvent $(\Delta_g-(z\pm i0))^{-1}$.
\end{rem}

In what follows we shall write $ E_{\sqrt{\Delta_g}}(\lambda)=1_{(-\infty,\lambda)}(\sqrt{\Delta_g})$, $\lambda>0$,
for  the spectral projection of $\sqrt{\Delta_g}$, and remark that the spectral measures $d(E_{\sqrt{\Delta_g}}(\lambda) u, u)_{L^2(M)}$ are absolutely continuous with respect to the Lebesgue measure, for any $u\in L^2(M)$.  Let us write 
\[
dE_{\sqrt{\Delta_g}}(\lambda):=\frac{d}{d\lambda} E_{\sqrt{\Delta_g}}(\lambda).
\]
The proof of Theorem \ref{thm_resolvent_Schatten_laplacian} is based on the following weighted Schatten norm estimates on the spectral measure $dE_{\sqrt{\Delta_g}}(\lambda)$ of $\sqrt{\Delta_g}$,  which extend the corresponding estimates of   Frank and Sabin \cite[Theorem 2]{Frank_Sabin}, obtained in the Euclidean setting. We believe that these estimates may be of some independent interest. 

\begin{thm}
\label{prop:spectral measure Schatten estimate}
Let $(M,g)$ be an asymptotically conic non-trapping manifold of dimension $n\ge 3$. Let $p\in [1, \frac{n+1}{2}]$. 
Then there exists $C>0$  such that  for all $\lambda>0$ and all $W_1,W_2 \in L^{2p}(M)$, we have $W_1 dE_{\sqrt{\Delta_g}}(\lambda) W_2\in \mathcal{C}_{q}(L^2(M))$, $q=\frac{p(n-1)}{n-p}\in [1,n+1]$, and 
\begin{equation}
\| W_1 dE_{\sqrt{\Delta_g}}(\lambda) W_2 \|_{\mathcal{C}_q (L^2(M))} \leq C \lambda^{-1 + \frac{n}{p}} \| W_1 \|_{L^{2p}(M)} \| W_2 \|_{L^{2p}(M)}. 
\label{spec-meas-bound}\end{equation}
\end{thm}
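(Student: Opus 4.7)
The plan is to follow the Frank--Sabin strategy \cite{Frank_Sabin} of interpolating between the two endpoints of the interval $p \in [1, \frac{n+1}{2}]$, using as inputs the microlocal description of the spectral measure $dE_{\sqrt{\Delta_g}}(\lambda)$ on non-trapping asymptotically conic manifolds developed in \cite{GuHa} and \cite{GuHaSi_III}. Specifically, I would invoke (i) the uniform pointwise kernel bound
$$|dE_{\sqrt{\Delta_g}}(\lambda)(z,w)| \le C\lambda^{n-1}, \quad z,w\in M,\ \lambda>0,$$
which follows from the parametrix construction in \cite{GuHaSi_III}, and (ii) the Stein--Tomas-type restriction estimate
$$\|dE_{\sqrt{\Delta_g}}(\lambda)\|_{L^{2(n+1)/(n+3)}(M) \to L^{2(n+1)/(n-1)}(M)} \le C \lambda^{(n-1)/(n+1)},$$
also established in the same microlocal framework.

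At the endpoint $p=1$ (where $q=1$), the factorization $dE_{\sqrt{\Delta_g}}(\lambda) = A(\lambda) A(\lambda)^*$ arising from Stone's formula furnishes an operator $A(\lambda)$ from $L^2$ of a compact boundary parameter space into $L^2(M)$, whose Schwartz kernel inherits the pointwise bound $|A(\lambda)(z,y)| \le C \lambda^{(n-1)/2}$. A direct Hilbert--Schmidt computation then yields
$$\|W_1 A(\lambda)\|_{\mathcal{C}_2} \cdot \|A(\lambda)^* W_2\|_{\mathcal{C}_2} \le C\lambda^{n-1} \|W_1\|_{L^2(M)}\|W_2\|_{L^2(M)},$$
which bounds the trace norm $\|W_1 dE_{\sqrt{\Delta_g}}(\lambda) W_2\|_{\mathcal{C}_1}$ since $W_1 dE_{\sqrt{\Delta_g}}(\lambda) W_2 = (W_1 A(\lambda))(A(\lambda)^* W_2)$. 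At the endpoint $p=\frac{n+1}{2}$ (where $q=n+1$), the Stein--Tomas bound above is converted into a Schatten estimate by the $TT^*$-plus-duality argument of \cite{Frank_Sabin}: using the identity
$$\|W_1 dE_{\sqrt{\Delta_g}}(\lambda) W_2\|_{\mathcal{C}_{n+1}}^2 = \|W_1 dE_{\sqrt{\Delta_g}}(\lambda) |W_2|^2 dE_{\sqrt{\Delta_g}}(\lambda) \overline{W_1}\|_{\mathcal{C}_{(n+1)/2}},$$
one iterates and applies H\"older in Schatten ideals together with the operator-norm restriction inequality.

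The full range $p \in [1, \frac{n+1}{2}]$ then follows by Stein's complex interpolation applied to the analytic family $s \mapsto |W_1|^{s} dE_{\sqrt{\Delta_g}}(\lambda) |W_2|^{s}$ of operators in the Schatten scale, since the parameters $\frac{1}{2p}$, $\frac{1}{q} = \frac{n-p}{p(n-1)}$, and the $\lambda$-exponent $-1 + \frac{n}{p}$ all vary linearly in the interpolation parameter between their values at the two endpoints. The most delicate step is the endpoint $p = \frac{n+1}{2}$: in the Euclidean case this passage from Stein--Tomas to Schatten rests on the explicit Fourier restriction/extension picture for $dE_\lambda$ on a sphere, whereas on a non-trapping asymptotically conic manifold the underlying ``restriction operator'' $A(\lambda)^*$ has the intricate Legendrian/Fourier integral operator structure of \cite{GuHaSi_III}, with separate incoming and outgoing microlocal components and boundary terms. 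One must verify that the non-trapping hypothesis delivers uniform-in-$\lambda$ control on each piece of the parametrix and that the $TT^*$ argument closes when the pieces are recombined; this is where the non-trapping assumption enters essentially.
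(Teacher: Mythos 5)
Your high-level outline (prove endpoints $p=1$ and $p=\frac{n+1}{2}$, then interpolate) matches the paper, and your treatment of the $p=1$ endpoint via the Poisson-operator factorization $dE_{\sqrt{\Delta_g}}(\lambda)=(2\pi)^{-1}P(\lambda)P(\lambda)^*$ and a Hilbert--Schmidt computation is essentially correct (though what you actually need and have is the bound $\|P(\lambda)(m,\cdot)\|_{L^2(\partial M)}\le C\lambda^{(n-1)/2}$ for a.e.\ $m$, rather than a pointwise kernel bound). However, there are two genuine gaps at the $p=\frac{n+1}{2}$ endpoint, which is the crucial one.

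First, the passage from the operator-norm Stein--Tomas estimate $\|dE(\lambda)\|_{L^{\frac{2(n+1)}{n+3}}\to L^{\frac{2(n+1)}{n-1}}}\lesssim\lambda^{\frac{n-1}{n+1}}$ to the Schatten-class bound $\|W_1 dE(\lambda) W_2\|_{\mathcal{C}_{n+1}}$ is not achieved by iterating the identity $\|T\|_{\mathcal{C}_{2q}}^2=\|T^*T\|_{\mathcal{C}_q}$ together with Schatten--H\"older. That iteration never leaves the scale of operator norms unless it lands on a Hilbert--Schmidt or trace endpoint where a kernel bound is available, and the operator-norm restriction estimate alone does not supply one. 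What Frank and Sabin actually do (and what the paper adapts) is Stein complex interpolation on the analytic family $T_s=\chi_+^s(\lambda-\sqrt{\Delta_g})$, using an $L^2\to L^2$ operator-norm bound at $\Re s=0$ and an $L^1\to L^\infty$ pointwise kernel bound at $\Re s=-\frac{n+1}{2}$; the Schatten estimate at $s=-1$ then falls out of Proposition~\ref{prop:Frank-Sabin}. This step is missing from your argument.

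Second, and more fundamentally, the kernel bound needed at $\Re s=-\frac{n+1}{2}$ relies on derivative estimates of the form $|\partial_\lambda^k dE(\lambda)(m,m')|\lesssim\lambda^{n-1-k}(1+\lambda d(m,m'))^{-\frac{n-1}{2}+k}$. On a non-trapping asymptotically conic manifold these estimates fail for the full spectral measure because of conjugate points in the geodesic flow; they only hold after sandwiching by the pseudodifferential microlocalizers $Q_i(\eta)$. The paper therefore must decompose $W_1dE(\lambda)W_2=\sum_{i,j}W_1 Q_i(\eta)^*dE(\lambda)Q_j(\eta)W_2$, prove the Schatten bound for the diagonal pieces $i=j$ using the analytic family $T_s=Q_i(\eta)^*\phi(\sqrt{\Delta_g}/\lambda)\chi_+^s(\lambda-\sqrt{\Delta_g})Q_i(\eta)$, and then control the off-diagonal pieces $i\ne j$ by the $T^*T$ structure of the spectral measure together with the Ky Fan inequality. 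Your proposal gestures at the Legendrian parametrix of \cite{GuHaSi_III} as the source of delicacy, but omits the microlocal partition of unity entirely; without it the critical endpoint estimate cannot be closed.
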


\begin{rem}  If the non-trapping assumption is dropped, the estimates in Theorem~\ref{prop:spectral measure Schatten estimate}, and therefore also Theorem~\ref{thm_resolvent_Schatten_laplacian}, may fail. Instead, the estimates will hold for all $\lambda \leq \lambda_0$ for a constant $C$ which depends on $\lambda_0$. A ``metric bottle'' example illustrating this, for which the best $C(\lambda_0)$ grows exponentially in $\lambda_0$, is given in \cite[Remark 8.8]{GHS}. 
\end{rem}

Let us now consider the Schr\"odinger operator $\Delta_g +V$ with a complex valued potential $V\in L^{p}(M)$, $\frac{n}{2}\le p<\infty$. As explained in Section \ref{sec_4_individual}, this operator has a natural $m$-sectorial realization on $L^2(M)$, and the spectrum of  $\Delta_g +V$ in $\C\setminus [0,\infty)$ consists of isolated eigenvalues of finite algebraic multiplicity.  

As an application of Theorem \ref{thm_GuHa}, we have the following generalization of the results of Frank \cite{Frank_2011}, \cite{Frank_2015}, and Frank and Simon \cite{Frank_Simon} concerning Keller type bounds on the individual eigenvalues of non-self-adjoint Schr\"odinger operators in the Euclidean setting to that of an asymptotically conic non-trapping manifold, see also \cite{FaKreVe}. 
\begin{thm}
\label{thm_indiviual_asymptotically conic}
Let $(M,g)$ be an asymptotically conic non-trapping manifold of dimension $n\ge 3$.  
\begin{itemize}
\item[(i)] Let $V\in L^{\gamma+\frac{n}{2}}(M)$ for some $0< \gamma\le \frac{1}{2}$. Then any eigenvalue $\lambda\in \C$ of the operator $\Delta_g+V$ satisfies 
\begin{equation}
\label{eq_8_0}
|\lambda|^\gamma\le C_{\gamma, n} \|V\|_{L^{\gamma+\frac{n}{2}}(M)}^{\gamma+\frac{n}{2}},
\end{equation}
where the constant $C_{\gamma, n} >0$ depends on $\gamma$ and $n$ only. 
\item[(ii)] If $V\in L^{\frac{n}{2}}(M)$ is such that  $\|V\|_{L^{\frac{n}{2}}(M)}$ is sufficiently small, then the operator $\Delta_g+V$ has no eigenvalues. 

\item[(iii)] Let $V\in L^{\gamma+\frac{n}{2}}(M)$ for some $\gamma> \frac{1}{2}$. Then any eigenvalue $\lambda\in \C$ of the operator $\Delta_g+V$ satisfies 
\begin{equation}
\label{eq_8_0_iii}
d(\lambda)^{\gamma-\frac{1}{2}}|\lambda|^{\frac{1}{2}}\le C_{\gamma, n}\|V\|^{\gamma+\frac{n}{2}}_{L^{\gamma+\frac{n}{2}}(M)},
\end{equation}
where $d(\lambda)$ is given by \eqref{eq_intro_2_new_d} and the constant $C_{\gamma, n} >0$ depends on $\gamma$ and $n$ only. 
\end{itemize}
\end{thm}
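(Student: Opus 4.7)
The plan is to use the standard Birman--Schwinger reduction, combined with the uniform resolvent estimate of Theorem \ref{thm_GuHa}, exactly along the lines of Frank's arguments in \cite{Frank_2011}, \cite{Frank_2015}. Factor $V = V_1 V_2$ with $V_2 = |V|^{1/2}$ and $V_1 = \operatorname{sgn}(V) |V|^{1/2}$, so that $V_1, V_2 \in L^{2s}(M)$ with $s = \gamma + n/2$ and $\|V_1\|_{L^{2s}} \|V_2\|_{L^{2s}} = \|V\|_{L^s}$. If $\lambda \in \mathbb{C} \setminus \{0\}$ is an eigenvalue of $\Delta_g + V$ with eigenfunction $u$, set $\phi = V_2 u$. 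Using the $m$-sectoriality of $\Delta_g+V$ and Sobolev embedding one checks $\phi \in L^2(M)$, $\phi \not\equiv 0$, and $\phi = - V_2 (\Delta_g - \lambda)^{-1} V_1 \phi$ (interpreting the resolvent at $\lambda \in (0,\infty)$ via the limiting absorption principle, using the outgoing/incoming resolvent as in the remark after Theorem \ref{thm_resolvent_Schatten_laplacian}). Consequently $\| V_2 (\Delta_g - \lambda)^{-1} V_1 \|_{L^2 \to L^2} \ge 1$, and the task reduces to bounding this operator norm from above in terms of $\|V\|_{L^s}$, $|\lambda|$, and $d(\lambda)$.

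For parts (i) and (ii), I pick $p = 2s/(s+1) \in [\tfrac{2n}{n+2}, \tfrac{2(n+1)}{n+3}]$, which corresponds to $0 \le \gamma \le 1/2$. Then H\"older gives $V_1 : L^2 \to L^p$ and $V_2 : L^{p'} \to L^2$ each with norm $\|V\|_{L^s}^{1/2}$, so Theorem \ref{thm_GuHa} yields
\[
\| V_2 (\Delta_g - \lambda)^{-1} V_1 \|_{L^2 \to L^2} \le C \|V\|_{L^s} |\lambda|^{n(1/p - 1/2) - 1} = C \|V\|_{L^s} |\lambda|^{-\gamma/s}.
\]
Combined with the Birman--Schwinger lower bound, this gives $|\lambda|^\gamma \le C \|V\|_{L^s}^s$, which is \eqref{eq_8_0}. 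For $\gamma = 0$ (so $s = n/2$, $p = \tfrac{2n}{n+2}$), the exponent of $|\lambda|$ is zero; hence if $\|V\|_{L^{n/2}}$ is small enough that $C\|V\|_{L^{n/2}} < 1$, we obtain a contradiction, proving (ii).

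For part (iii), the value of $p = 2s/(s+1)$ corresponding to $\gamma > 1/2$ lies outside the KRS range. I will instead interpolate, by Riesz--Thorin, between the endpoint estimate of Theorem \ref{thm_GuHa} at $p_0 = \tfrac{2(n+1)}{n+3}$, giving $\|(\Delta_g - \lambda)^{-1}\|_{L^{p_0}\to L^{p_0'}} \le C |\lambda|^{-1/(n+1)}$, and the trivial spectral-theorem bound $\|(\Delta_g - \lambda)^{-1}\|_{L^2 \to L^2} \le d(\lambda)^{-1}$. Choosing the interpolation parameter $\theta = (2\gamma-1)/(n+2\gamma) \in (0,1)$, one verifies that the resulting exponents $p_\theta, q_\theta$ are conjugate, that H\"older with $V_j \in L^{2s}$ fits precisely (i.e., $1/p_\theta - 1/2 = 1/(2s)$), and that
\[
\| V_2 (\Delta_g - \lambda)^{-1} V_1 \|_{L^2 \to L^2} \le C \|V\|_{L^s} |\lambda|^{-1/(n+2\gamma)} d(\lambda)^{-(2\gamma-1)/(n+2\gamma)}.
\]
The Birman--Schwinger lower bound then gives $|\lambda|^{1/(n+2\gamma)} d(\lambda)^{(2\gamma-1)/(n+2\gamma)} \le C\|V\|_{L^s}$; raising to the power $s = (n+2\gamma)/2$ yields exactly \eqref{eq_8_0_iii}. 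The main technical obstacle will be making the Birman--Schwinger step fully rigorous for $\lambda \in (0,\infty)$ (embedded eigenvalues), where one must verify that the eigenfunction $u$ pairs appropriately against the weighted incoming/outgoing resolvent bounds; this is handled using the $m$-sectorial structure of $\Delta_g + V$ discussed in Section \ref{sec_4_individual} and the weighted mapping properties of $(\Delta_g - (\lambda \pm i0))^{-1}$ from \cite{Melrose, HaVa}.
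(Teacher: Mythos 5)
Your computations are correct and would yield the theorem, but you organize the argument via the Birman--Schwinger reduction whereas the paper works directly with the eigenfunction (following Frank--Simon): writing $\psi=-(\Delta_g-\lambda)^{-1}(V\psi)$, applying H\"older and the KRS bound to get $\|\psi\|_{L^{p'}}\le C|\lambda|^{n(1/p-1/2)-1}\|V\|_{L^{\gamma+n/2}}\|\psi\|_{L^{p'}}$, and cancelling. The two routes are essentially equivalent bookkeeping: your factorization $V=V_1V_2$ with the $L^2\to L^2$ bound on $V_2(\Delta_g-\lambda)^{-1}V_1$, and the paper's single H\"older on $V\psi\in L^p$, lead to the same exponents and constants. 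For part (iii), the interpolation you do inline is exactly the paper's Corollary~\ref{cor_Guillarmou_Hassell_2014}; your $\theta$-bookkeeping checks out. A small point in your favor: the Birman--Schwinger framing makes part (ii) immediate without a separate argument at $\lambda=0$, whereas the paper handles $\lambda=0$ via $(\Delta_g-i\varepsilon)^{-1}$ separately.

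The one place you are vaguer than the paper is the embedded case $\lambda\in(0,\infty)$. You gesture at the $m$-sectorial structure and weighted limiting-absorption mapping properties, but the paper's actual device is different and more concrete: it sets $\psi_\varepsilon=f_\varepsilon(\Delta_g)\psi$ with $f_\varepsilon(t)=(t-\lambda)/(t-\lambda-i\varepsilon)$, uses the fact that $\Delta_g$ has purely absolutely continuous spectrum (so $E_{\{\lambda\}}=0$) together with dominated convergence to get $\psi_\varepsilon\to\psi$ in $L^2$, observes that $\psi_\varepsilon=-(\Delta_g-\lambda-i\varepsilon)^{-1}(V\psi)$ is uniformly bounded in $L^{p'}$ by the KRS estimate, and passes to a weak-$*$ limit with lower semicontinuity. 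Your sketch does not pin down this mechanism, and in particular does not invoke the absolute continuity of $\mathrm{Spec}(\Delta_g)$, which is the key input making $\psi_\varepsilon\to\psi$ work. The same point also underlies your claim $\phi=V_2 u\not\equiv 0$: if $\phi=0$ then $\Delta_g u=\lambda u$ with $u\in L^2$, which is ruled out precisely because $\Delta_g$ has no eigenvalues. Worth making both of these explicit if you write this up.
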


\begin{rem} Parts (i) and (ii) of Theorem \ref{thm_indiviual_asymptotically conic}
 have been established   in \cite[Proposition 7.2 ]{GuHa},   without specifying the radius of the disk, containing the eigenvalues of $\Delta_g+V$, in part (i).   
 \end{rem}

As a consequence of Theorem \ref{thm_resolvent_Schatten_laplacian}, we obtain the following analog of a result of Frank and Sabin \cite[Theorem 16]{Frank_Sabin}, concerning Lieb-Thirring type inequalities for the sums of eigenvalues of $\Delta_g+V$ in the case of a short range potential  $V\in L^p(M)$,  $p=\frac{n}{2}+\gamma$, where  $0\le \gamma\le \frac{1}{2}$. 
\begin{thm}
\label{thm_main_sums_asymp} 
Let $(M,g)$ be an asymptotically conic non-trapping manifold of dimension $n\ge 3$, and let $V\in L^p(M)$ with $p$ such that 
\[
\frac{n}{2}\le p\le \frac{n+1}{2}. 
\]
Let us denote by $\lambda_j$  the eigenvalues of $\Delta_g+V$ in $ \C\setminus [0,\infty)$, repeated according to their algebraic multiplicities. 
The following estimates then hold:  
 \begin{itemize}
\item[(i)] If $p=\frac{n}{2}$, we have  
\begin{equation}
\label{eq_11_0_asym}
\sum_{j}  \frac{\emph{\Im}\sqrt{\lambda_j}}{1+|\lambda_j|}< \infty,
\end{equation}
where the branch of the square root is chosen to have positive imaginary part. 

\item[(ii)] If $\frac{n}{2}<p \le \frac{n+1}{2}$, then 
\begin{equation}
\label{eq_11_0_1_asym}
\sum_{j}  \frac{d(\lambda_j)}{|\lambda_j|^{(1-\varepsilon)/2}}\le C_{\varepsilon,p,n} \|V\|_{L^p(M)}^{\frac{(1+\varepsilon) p}{2p-n}},
\end{equation}
for all $\varepsilon$ satisfying 
\[
\begin{cases} 
\varepsilon\ge 0, & \frac{n}{2}<p< \frac{n^2}{2n-1},\\
\varepsilon>\frac{p(2n-1)-n^2}{n-p}\ge  0, & \frac{n^2}{2n-1} \le p\le \frac{n+1}{2}.
\end{cases}
\]
\end{itemize}

\end{thm}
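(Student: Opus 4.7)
The plan is to combine the Birman--Schwinger principle with the Schatten resolvent bound of Theorem \ref{thm_resolvent_Schatten_laplacian}, form a regularized perturbation determinant, and apply a Blaschke-type theorem of Borichev--Golinskii--Kupin on the unit disk, following the strategy of Frank--Sabin.

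First, I factor $V = V_1 V_2$ with $V_1 = |V|^{1/2}$ and $V_2 = \operatorname{sgn}(V)\,|V|^{1/2}$, so that $\|V_j\|_{L^{2p}(M)}^{2} = \|V\|_{L^p(M)}$. For $\lambda \in \C \setminus [0,\infty)$ introduce the Birman--Schwinger operator
\[
K(\lambda) := V_2 (\Delta_g - \lambda)^{-1} V_1.
\]
Theorem \ref{thm_resolvent_Schatten_laplacian} gives, for $q = p(n-1)/(n-p) \in [n-1, n+1]$,
\[
\|K(\lambda)\|_{\mathcal{C}_q(L^2(M))} \le C |\lambda|^{-1 + \frac{n}{2p}} \|V\|_{L^p(M)}.
\]
Note that at $p = n/2$ this bound is $\lambda$-independent, while for $p > n/2$ the exponent of $|\lambda|$ is strictly positive, so $K(\lambda)$ decays at infinity but may blow up as $\lambda \to 0$.

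By the Birman--Schwinger principle, which extends to our setting as $V$ is a relatively compact perturbation of $\Delta_g$, the eigenvalues of $\Delta_g + V$ in $\C\setminus[0,\infty)$ correspond to points where $-1 \in \operatorname{Spec}(K(\lambda))$, with matching algebraic multiplicities. I then consider the regularized perturbation determinant
\[
D(\lambda) = {\det}_{\lceil q \rceil}\!\bigl(I + K(\lambda)\bigr),
\]
which is holomorphic on $\C \setminus [0,\infty)$, vanishes precisely at the eigenvalues $\lambda_j$ of $\Delta_g + V$ with the correct multiplicity, and obeys the classical bound $\log |D(\lambda)| \le C_q \|K(\lambda)\|_{\mathcal{C}_q}^{\lceil q \rceil}$. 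Combined with the Schatten estimate,
\[
\log|D(\lambda)| \le C \|V\|_{L^p(M)}^{\lceil q\rceil} \, |\lambda|^{\lceil q\rceil(-1 + n/(2p))}.
\]

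Next, compose $D$ with the conformal map $\mathbb{D} \to \C \setminus [0,\infty)$, $w \mapsto \lambda(w) = -\bigl((1+w)/(1-w)\bigr)^2$, so that $\tilde D(w) := D(\lambda(w))$ is holomorphic on $\mathbb{D}$ with $|\tilde D(0)| = 1$. Using $|\lambda(w)| \asymp |1-w|^{-2}$ near $w=1$ and $d(\lambda(w)) \asymp (1-|w|^2)|1-w|^{-2}|1+w|$, the Schatten growth of $D$ translates into an estimate of the form
\[
|\tilde D(w)| \le \exp\!\Bigl( C \|V\|_{L^p(M)}^{\lceil q \rceil} (1-|w|)^{-\alpha} |1-w|^{-\beta}\Bigr),
\]
with exponents $\alpha,\beta \ge 0$ determined by $q$, $p$, and $n$. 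Applying the Borichev--Golinskii--Kupin theorem yields, for each $\varepsilon>0$,
\[
\sum_j (1-|w_j|)^{\alpha+1+\varepsilon} |1-w_j|^{(\beta-1)_+ + \varepsilon} \le C_\varepsilon \|V\|_{L^p(M)}^{\lceil q\rceil}.
\]
Reverting to $\lambda_j$ via $1-|w_j| \asymp d(\lambda_j)/(|\lambda_j|^{1/2}(1+|\lambda_j|^{1/2}))$ and $|1-w_j| \asymp 1/(1+|\lambda_j|^{1/2})$ converts this into a weighted sum over $\lambda_j$. In part (i), where $p = n/2$ and the Schatten bound is $\lambda$-independent, this directly produces \eqref{eq_11_0_asym} since $d(\lambda)/(1+|\lambda|)^{1/2} \gtrsim \Im\sqrt{\lambda}/(1+|\lambda|)$ for $\Im\sqrt{\lambda}>0$. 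In part (ii), one further optimizes by exploiting the dilation/homogeneity in $V$ to improve the a priori $\lceil q \rceil$-th power on the right to the sharp power $(1+\varepsilon)p/(2p-n)$; the case split on $\varepsilon$ reflects the transition where $\beta-1$ changes sign in the Borichev--Golinskii--Kupin bound.

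The main technical obstacle is the precise bookkeeping of the exponents $\alpha,\beta$ as functions of $p$ and $q$ and the rescaling step that sharpens the exponent of $\|V\|_{L^p(M)}$. In particular, the range $n/2 < p < n^2/(2n-1)$ versus $n^2/(2n-1) \le p \le (n+1)/2$ in (ii) is precisely the dichotomy between $\beta \le 1$ and $\beta > 1$, which is what produces the threshold $\varepsilon \ge (p(2n-1) - n^2)/(n-p)$ in the second regime. Once these exponents are correctly identified, the argument proceeds as in the Euclidean case of Frank--Sabin, the only geometric input being Theorem \ref{thm_resolvent_Schatten_laplacian}.
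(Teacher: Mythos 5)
Your overall strategy --- the Schatten resolvent bound from Theorem~\ref{thm_resolvent_Schatten_laplacian}, the Birman--Schwinger principle, the regularized determinant $\det_{\lceil q\rceil}$, the conformal map to the unit disk, and the Borichev--Golinskii--Kupin zero-distribution theorem --- is exactly the route the paper takes; the paper simply declares itself ``in the same situation as'' \cite[Theorem 16]{Frank_Sabin} once the determinant estimate is in place and leaves the Blaschke-type bookkeeping to that reference.

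The one genuine gap is the step you describe as ``exploiting the dilation/homogeneity in $V$'' to sharpen the power of $\|V\|_{L^p(M)}$ to the stated $\frac{(1+\varepsilon)p}{2p-n}$. On a general asymptotically conic manifold there is no dilation group, so the Euclidean rescaling $V(\cdot)\mapsto t^2V(t\,\cdot)$ is unavailable and this phrase does not transfer. What does transfer is a rescaling in the \emph{spectral} parameter: introduce a free parameter into the conformal map, $z(w)=-\tau^2\bigl(\frac{1+w}{1-w}\bigr)^2$, normalize the determinant at $w=0$ (where $z=-\tau^2$), and choose $\tau$ in terms of $\|V\|_{L^p(M)}$. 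On $\R^n$ the unitary rescaling identity $\det_{\lceil q\rceil}(I+K_{V_t}(z))=\det_{\lceil q\rceil}(I+K_{V}(z/t^2))$ shows the two forms of the argument are the same; in the present setting only the spectral rescaling survives, and this is the one point of the Frank--Sabin proof where the non-Euclidean geometry actually intervenes. Your write-up should name this substitution rather than appeal to a homogeneity of $V$ that the manifold does not have.

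Two smaller remarks. The standard regularized determinant inequality reads $\log|\det_{\lceil q\rceil}(1+A)|\le \Gamma_q\|A\|_{\mathcal{C}_q}^{q}$ with exponent $q$ rather than $\lceil q\rceil$; this is what the paper uses and what feeds correctly into the subsequent exponent bookkeeping. Also, the asserted holomorphy of $D(\lambda)$ requires first checking that $\lambda\mapsto K(\lambda)$ is holomorphic as a $\mathcal{C}_q$-valued map; the paper verifies this by a Neumann-series expansion combined with density of trace-class operators in $\mathcal{C}_{q'}$. Neither point derails the argument, but both deserve a line.
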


\begin{rem} If $\frac{n}{2}<p\le \frac{n+1}{2}$, then by Theorem \ref{thm_indiviual_asymptotically conic}  we know that the eigenvalues of 
$\Delta_g+V$ are confined to an open disk centered at the origin. Furthermore, it follows from \eqref{eq_11_0_1_asym}  that if a sequence of eigenvalues  $\C\setminus [0,\infty)\ni \lambda_{j_k}\to E>0$ then $\Im \lambda_{j_k}\in \ell^1$. In the case $p=\frac{n}{2}$ the bound \eqref{eq_11_0_asym}  controls a possible accumulation rate of eigenvalues in $\C\setminus [0,\infty)$ at infinity, and it implies in particular with the help of 
\[
\Im (\sqrt{\lambda})=\frac{|\Im \lambda|}{\sqrt{2(|\lambda|+\Re \lambda)}}
\] 
that if a sequence of eigenvalues  $\C\setminus [0,\infty)\ni \lambda_{j_k}\to E>0$ then $\Im \lambda_{j_k}\in \ell^1$. 
\end{rem}

As another application of the Schatten class estimates for the resolvent of $\Delta_g$ given in Theorem \ref{thm_resolvent_Schatten_laplacian}, we get the following generalization of a result of Frank \cite[Theorem 1.2]{Frank_2015}, concerning Lieb-Thirring type inequalities for the sums of eigenvalues $\Delta_g+V$ in the case of a long range potential  $V\in L^p(M)$,    $p=\gamma+\frac{n}{2}$,   $\gamma> \frac{1}{2}$. 

\begin{thm}
\label{thm_sums_long_range_asym}
Let $(M,g)$ be an asymptotically conic non-trapping manifold  of dimension $n\ge 3$, and let $V\in L^{p}(M)$ with $p=\gamma+\frac{n}{2}$,   $\gamma> \frac{1}{2}$. Then the eigenvalues $\lambda_j\in \C\setminus[0,\infty)$ of $\Delta_g+V$, repeated according to their algebraic multiplicities, satisfy the following bounds, for any $\varepsilon>0$, 
\[
\bigg(  \sum_{|\lambda_j|^\gamma \le C_{\gamma, n}\int_{M}|V|^{\gamma+\frac{n}{2}}dx} d(\lambda_j)^{2\gamma+\varepsilon}  \bigg)^{\frac{\gamma}{2\gamma+\varepsilon}}\le L_{\varepsilon, \gamma, n} \int_{M}|V|^{\gamma+\frac{n}{2}}dx,
\]
and for any $\varepsilon>0$, $0<\varepsilon'<\frac{\gamma}{\gamma+\frac{n}{2}},$ and $\mu\ge 1$, 
\begin{align*}
\bigg( \sum_{|\lambda_j|^\gamma \ge \mu C_{\gamma,n} \int_{M}|V|^{\gamma+\frac{n}{2}}dx}\frac{d(\lambda_j)^{2\gamma +\varepsilon}}{|\lambda_j|^{2\gamma-\frac{\gamma}{\gamma+\frac{n}{2}}+\varepsilon+\varepsilon'}}& \bigg)^{\frac{\gamma(\gamma+\frac{n}{2})}{\gamma-\varepsilon'(\gamma+\frac{n}{2})}} 
\le L_{\varepsilon,\varepsilon',\gamma,n} \mu^{-\frac{\varepsilon'(\gamma+\frac{n}{2})}{\gamma-\varepsilon'(\gamma+\frac{n}{2})}}\int_{M}|V|^{\gamma+\frac{n}{2}}dx.
\end{align*}
\end{thm}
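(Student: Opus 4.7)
The plan is to adapt the argument of Frank \cite{Frank_2015} to the asymptotically conic setting, using our Theorem \ref{thm_resolvent_Schatten_laplacian} in place of the Euclidean Schatten resolvent estimate of Frank--Sabin, together with the individual-eigenvalue bound of Theorem \ref{thm_indiviual_asymptotically conic}(iii) to split the eigenvalues by magnitude.  Write $V = U|V|$ via the polar decomposition, set $W_1 = U|V|^{1/2}$ and $W_2 = |V|^{1/2}$, and form the Birman--Schwinger operator
\[
K(z) = W_1(\Delta_g - z)^{-1} W_2, \qquad z \in \mathbb{C}\setminus[0,\infty).
\]
The standard Birman--Schwinger principle identifies the eigenvalues $\lambda_j$ of $\Delta_g+V$, with their algebraic multiplicities, with the points where $-1$ is an eigenvalue of $K(\lambda_j)$.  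Introduce the regularized Carleman determinant $h(z) := {\det}_{n+1}(I+K(z))$, which is holomorphic on $\mathbb{C}\setminus[0,\infty)$ and vanishes precisely on $\{\lambda_j\}$ with the correct multiplicities.

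To control $h(z)$, combine the Weyl--Horn--Simon bound
\[
\log|h(z)| \le \Gamma_{n+1}\,\|K(z)\|_{\mathcal{C}_{n+1}}^{\,n+1}
\]
with Theorem \ref{thm_resolvent_Schatten_laplacian} at the endpoint $p_0=(n+1)/2$, which gives $\|K(z)\|_{\mathcal{C}_{n+1}} \le C\,|z|^{-1/(n+1)} \|V\|_{L^{(n+1)/2}(M)}$ whenever $V\in L^{(n+1)/2}(M)$.  Since $V\in L^p(M)$ with $p=\gamma+n/2>(n+1)/2$, the Schatten estimate does not apply to $V$ directly, so I would follow the Frank \cite{Frank_2015} truncation scheme, splitting $V=V\,\mathbf{1}_{\{|V|\le\tau\}}+V\,\mathbf{1}_{\{|V|>\tau\}}$ at a threshold $\tau=\tau(|z|)$ to be optimized.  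The high part is controlled in $L^{(n+1)/2}$ by $\tau^{(n+1)/2-p}\|V\|_{L^p}^{p}$ via H\"older, while the bounded low part is treated by a direct operator-norm bound on the resolvent; optimizing $\tau$ as a function of $|z|$ produces an effective pointwise upper bound of the form
\[
\log|h(z)| \le F_{\gamma,n}\bigl(|z|\bigr)\,\|V\|_{L^p(M)}^{p},
\]
with $F_{\gamma,n}$ an explicit polynomial weight dictated by the exponents in the statement.

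Next, feed this growth estimate into a complex-analytic zero-counting inequality for holomorphic functions on $\mathbb{C}\setminus[0,\infty)$, in the sharpened Borichev--Golinskii--Kupin form used in \cite{Frank_Sabin,Frank_2015}.  This converts the bound on $\log|h(z)|$ into a weighted $\ell^r$ estimate for the zeros $\{\lambda_j\}$, with weights being powers of $d(\lambda_j)$ and $|\lambda_j|$ matching those in the statement.  Finally, I would use Theorem \ref{thm_indiviual_asymptotically conic}(iii) in the form $d(\lambda_j)^{\gamma-1/2}|\lambda_j|^{1/2}\le C\|V\|_{L^p}^{p}$ to partition
\[
\Lambda_1=\{\lambda_j:|\lambda_j|^\gamma\le C_{\gamma,n}\|V\|_{L^p}^p\},\qquad \Lambda_2=\{\lambda_j:|\lambda_j|^\gamma\ge \mu C_{\gamma,n}\|V\|_{L^p}^p\}.
\]
On $\Lambda_1$ the eigenvalues lie in a fixed disk, so all $|\lambda_j|$-weights can be absorbed into the constant, yielding the first inequality.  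On $\Lambda_2$, Theorem \ref{thm_indiviual_asymptotically conic}(iii) trades powers of $d(\lambda_j)$ against negative powers of $|\lambda_j|$, and a H\"older interpolation in the free parameters $\varepsilon,\varepsilon'$ produces precisely the factor $\mu^{-\varepsilon'(\gamma+n/2)/(\gamma-\varepsilon'(\gamma+n/2))}$ and the weight $|\lambda_j|^{-(2\gamma-\gamma/(\gamma+n/2)+\varepsilon+\varepsilon')}$ appearing in the second inequality.

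The principal technical obstacle is the truncation step for $V\in L^p$ with $p>(n+1)/2$: the Birman--Schwinger operator $K(z)$ is not a priori in $\mathcal{C}_{n+1}$, and the optimization of $\tau(|z|)$ must be tuned so that the resulting growth bound on $\log|h|$ matches the specific exponents in $d(\lambda_j)$ and $|\lambda_j|$ demanded by the statement, including the somewhat delicate role of the parameters $\varepsilon$ and $\varepsilon'$.  This truncation-and-optimization step mirrors the core innovation of Frank \cite{Frank_2015}; once our uniform Schatten resolvent estimate (Theorem \ref{thm_resolvent_Schatten_laplacian}) and the individual-eigenvalue bound (Theorem \ref{thm_indiviual_asymptotically conic}(iii)) are in hand, the rest of the argument transfers from the Euclidean setting essentially mechanically.
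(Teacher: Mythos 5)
Your high-level roadmap (Birman--Schwinger operator, regularized determinant, zero-counting in the style of Borichev--Golinskii--Kupin, splitting the eigenvalues by magnitude using Theorem \ref{thm_indiviual_asymptotically conic}(iii)) matches the structure of the argument in Frank \cite{Frank_2015} and hence, implicitly, of the paper. The paper short-circuits almost all of this by invoking \cite[Theorem~3.1]{Frank_2015} as an abstract black box applied to the holomorphic family $K(z)=\sqrt V(\Delta_g-z)^{-1}\sqrt{|V|}$, so the only genuinely new step is the Schatten input, and here your proposal has a real gap.

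You propose to control $\|K(z)\|_{\mathcal{C}_{n+1}}$ by truncating $V$ at a level $\tau(|z|)$ and playing off the bounded low part against the $L^{(n+1)/2}$-small high part. This does not work for the Schatten norm. The high part $V\mathbf{1}_{\{|V|>\tau\}}$ is indeed controlled in $L^{(n+1)/2}$ by H\"older, but the low part $V\mathbf{1}_{\{|V|\le\tau\}}$ is in $L^\infty(M)$ and in $L^p(M)$ with $p=\gamma+\tfrac n2 >\tfrac{n+1}2$, yet is \emph{not} in $L^{(n+1)/2}(M)$ in general, because $M$ has infinite measure and $\tfrac{n+1}{2}<p$. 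Consequently the corresponding piece of $K(z)$ is merely a bounded operator, not in $\mathcal{C}_{n+1}$, and ${\det}_{n+1}(I+K(z))$ is not even defined along your decomposition. The truncation device is the right tool for the operator-norm (individual eigenvalue) bounds, as in Theorem \ref{thm_indiviual_asymptotically conic}, but for the Schatten estimate one needs a different input. What the paper (following \cite[Proposition~2.1]{Frank_2015}) actually does is to interpolate Theorem \ref{thm_resolvent_Schatten_laplacian} at the endpoint $p=\tfrac{n+1}{2}$, namely $\|W(\Delta_g-z)^{-1}W\|_{\mathcal{C}_{n+1}}\le C|z|^{-\frac{1}{n+1}}\|W\|^2_{L^{n+1}}$, against the trivial operator-norm bound $\|W(\Delta_g-z)^{-1}W\|_{L^2\to L^2}\le d(z)^{-1}\|W\|^2_{L^\infty}$, giving for $p=\gamma+\tfrac n2$
\[
\|W(\Delta_g-z)^{-1}W\|_{\mathcal{C}_{2p}}\le C\, d(z)^{-1+\frac{n+1}{2p}}\,|z|^{-\frac{1}{2p}}\,\|W\|^2_{L^{2p}(M)} .
\]
This places $K(z)$ in $\mathcal{C}_{2p}$ (not $\mathcal{C}_{n+1}$), with an explicit $d(z)$- and $|z|$-dependence, and it is precisely this interpolated estimate that makes the determinant of the correct regularization order well defined and that feeds into Frank's Theorem~3.1 to give the stated weights in $d(\lambda_j)$ and $|\lambda_j|$. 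Once you replace your truncation step with this interpolation (and adjust the determinant order to $\lceil 2p\rceil$), the rest of your outline is sound and agrees with the paper's.
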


\begin{rem} As observed in \cite{Frank_2015}, Theorem \ref{thm_sums_long_range_asym} has the following consequence: let $\gamma>1/2$ and $V\in L^{\gamma+n/2}(M)$. If $(\lambda_j)_{j=1}^\infty$
is a sequence of eigenvalues of $\Delta_g+V$ with $\lambda_j\to \lambda_0\in [0,\infty)$ then $\Im \lambda_j\in l^p$ for any $p>2\gamma$. 
\end{rem}

\begin{rem}  Let us emphasize once more that all our results, Theorems \ref{thm_resolvent_Schatten_laplacian},
\ref{prop:spectral measure Schatten estimate}, \ref{thm_indiviual_asymptotically conic}, 
\ref{thm_main_sums_asymp}, \ref{thm_sums_long_range_asym},  are valid for the metric Schr\"odinger operator in the Euclidean space $\R^n$, with a metric being a
non-trapping short range perturbation of the  Euclidean one, in the sense of 
\eqref{eq_short_range}. In particular, the results hold true for  the metric Schr\"odinger operator in the Euclidean space $\R^n$, with a metric being sufficiently small compactly supported perturbation of the  Euclidean one.
\end{rem}
 
\subsection{Outline of the paper}
The plan of the paper is as follows.  In Section \ref{sec_strategy_main_0}  we present our strategy for proving Theorem \ref{thm_resolvent_Schatten_laplacian}, which is the main result of the paper. 
Section \ref{sec:spectral measure estimates} is devoted to the proof of Theorem  \ref{prop:spectral measure Schatten estimate}, giving Schatten norm estimates on the spectral measure. In Section \ref{sec_consequences} we derive some Schatten norm estimates on the resolvent of the Laplacian, as a direct consequence of the Schatten norm estimates on the spectral measure and give their analogues at the endpoint case $p=\frac{n}{2}$, needed in the proof of Theorem  \ref{thm_resolvent_Schatten_laplacian}. 
The principal step in the proof of Theorem  \ref{thm_resolvent_Schatten_laplacian}, corresponding to the estimates on the spectrum, is carried out in Section \ref{sec_on_spectrum}.  Section \ref{sec_4_individual} contains the proof of Theorem \ref{thm_indiviual_asymptotically conic}, which follows the arguments of \cite{Frank_2015} and \cite{Frank_Simon} closely, relying on Theorem \ref{thm_GuHa}, with some small adjustments due to the fact that we are no longer in the Euclidean setting.  Finally, we observe in Section \ref{sec_bounds_sum_higher}  that Theorem \ref{thm_main_sums_asymp} and Theorem \ref{thm_sums_long_range_asym}  are direct consequences of Theorem \ref{thm_resolvent_Schatten_laplacian} combined with the arguments of  \cite[Theorem 16]{Frank_Sabin} and \cite[Theorem 1.2]{Frank_2015}. Appendix \ref{app} contains the proof of Lemma \ref{Guillarmou_Hassell_remark}, needed in the main text. Appendix \ref{app_2} is concerned with the analysis of the microlocal structure of the spectrally localized outgoing and incoming resolvent, used in the proof of Theorem  \ref{thm_resolvent_Schatten_laplacian}.


\section{Strategy of the proof of Theorem~\ref{thm_resolvent_Schatten_laplacian} }

\label{sec_strategy_main_0}

\subsection{Schatten norm estimates}\label{schattennormest}
We first recall the definition of the Schatten spaces of operators on $L^2(M)$, see \cite{Simon_trace}. Let $A$ be a compact operator on $L^2(M)$, and let  $\mu_j(A)$ be the singular values of $A$, given by  $\mu_j(A)=\lambda_j((A^*A)^{1/2})$. Here $\lambda_j(B)$ denotes the eigenvalues of a positive  self-adjoint compact operator $B$, arranged in decreasing order.  The  Schatten norm of $A$ of order $1\le q<\infty$  is   defined as follows,
\[
\|A\|^q_{\mc{C}_{q}(L^2(M))}=\sum_{j=1}^\infty \mu_j(A)^q=\text{tr} ((A^*A)^{q/2}). 
\]

The basic mechanism for proving Schatten norm estimates of Theorem \ref{thm_resolvent_Schatten_laplacian} and Theorem \ref{prop:spectral measure Schatten estimate} comes from the fact that the Schatten spaces are complex interpolation spaces, see \cite[Theorem 2.9]{Simon_trace},   \cite[p. 154]{Simon_operator_theory}, and  from the following result of Frank and Sabin \cite[Proposition 1]{Frank_Sabin}. 
\begin{prop}
\label{prop:Frank-Sabin}
Let $T_s$ be an analytic family of operators, defined on the strip $\{ s \in \CC \mid -\lambda_0 \leq \emph{\Re} s \leq 0 \}$ for some $\lambda_0 > 1$, acting on functions on $M$. Assume that we have operator norm bounds
\[
\| T_{ir} \|_{L^2(M) \to L^2(M)} \leq M_0 e^{a|r|}, \quad \| T_{-\lambda_0 + ir} \|_{L^1(M) \to L^\infty(M)} \leq M_1 e^{a|r|} \quad \forall r \in \RR
\]
for some $a\ge 0$ and  $M_0, M_1 > 0$. Then for any $W_1, W_2 \in L^{2\lambda_0}(M)$, the operator $W_1 T_{-1} W_2$ belongs to the Schatten class $\mathcal{C}_{2\lambda_0}(L^2(M))$ and we have the estimate
\[
\| W_1 T_{-1} W_2 \|_{\mathcal{C}_{2\lambda_0}} \leq M_0^{1 - \frac{1}{\lambda_0}} M_1^{\frac{1}{\lambda_0}} \| W_1\|_{L^{2\lambda_0}(M)}  \| W_2\|_{L^{2\lambda_0}(M)}.
\]
\end{prop}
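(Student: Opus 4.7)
The plan is to deduce Proposition~\ref{prop:Frank-Sabin} from Stein's complex interpolation theorem, applied in the scale of Schatten ideals $\mathcal{C}_q(L^2(M))$ rather than in $L^p$-spaces. The key observation is that the Schatten classes $\{\mathcal{C}_q\}_{q \in [2,\infty]}$ form a complex interpolation scale, with $\mathcal{C}_\infty$ the ideal of bounded operators (equipped with the operator norm on $L^2(M)$) and $\mathcal{C}_2$ the ideal of Hilbert--Schmidt operators. Since the exponent $q = 2\lambda_0$ corresponds to interpolation parameter $\theta = 1/\lambda_0$ between $\mathcal{C}_\infty$ and $\mathcal{C}_2$, and since $-1 = (1-\theta)\cdot 0 + \theta \cdot (-\lambda_0)$ for exactly the same $\theta$, the point $s = -1$ lies precisely at the interpolated position on the strip.

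Having fixed the scheme, I would introduce the polar decompositions $W_j = u_j |W_j|$ with $|u_j|\in\{0,1\}$, and consider the analytic family
\[
A_s \;=\; u_1 \, |W_1|^{-s}\, T_s \, u_2\, |W_2|^{-s},
\qquad -\lambda_0 \leq \Re s \leq 0,
\]
which satisfies $A_{-1} = W_1 T_{-1} W_2$. Analyticity in $s$ (in the sense of Stein) follows from the assumed analyticity of $T_s$, once one verifies it on a dense set of simple $W_j$ bounded away from $0$ and $\infty$, then passes to the limit. On the line $\Re s = 0$ the multipliers $u_j |W_j|^{-ir}$ have modulus $1$ on $\mathrm{supp}\,W_j$ and hence act as partial isometries on $L^2(M)$, so that
\[
\| A_{ir} \|_{\mathcal{C}_\infty} \;\leq\; \| T_{ir} \|_{L^2\to L^2} \;\leq\; M_0 e^{a|r|}.
\]
On the line $\Re s = -\lambda_0$, the hypothesis $\|T_{-\lambda_0 + ir}\|_{L^1\to L^\infty} \leq M_1 e^{a|r|}$ means the Schwartz kernel of $T_{-\lambda_0+ir}$ is essentially bounded by $M_1 e^{a|r|}$, and the Schwartz kernel of $A_{-\lambda_0+ir}$ is then
\[
K_{A_{-\lambda_0+ir}}(x,y) = u_1(x)|W_1(x)|^{\lambda_0 - ir}\, K_{T_{-\lambda_0+ir}}(x,y)\, u_2(y)|W_2(y)|^{\lambda_0 - ir},
\]
whose Hilbert--Schmidt norm is controlled by a direct Fubini computation:
\[
\| A_{-\lambda_0 + ir}\|_{\mathcal{C}_2} \;\leq\; M_1 e^{a|r|}\, \| W_1\|_{L^{2\lambda_0}}^{\lambda_0}\, \|W_2\|_{L^{2\lambda_0}}^{\lambda_0}.
\]

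Applying Stein interpolation to the analytic family $A_s$, with the admissible growth rate $e^{a|r|}$ on the two boundary lines, then gives at $s = -1$ the bound
\[
\| A_{-1}\|_{\mathcal{C}_{2\lambda_0}} \;\leq\; M_0^{1 - 1/\lambda_0}\, \bigl(M_1 \|W_1\|_{L^{2\lambda_0}}^{\lambda_0} \|W_2\|_{L^{2\lambda_0}}^{\lambda_0}\bigr)^{1/\lambda_0},
\]
which is exactly the claimed inequality since $A_{-1} = W_1 T_{-1} W_2$.

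The only delicate points in this plan are of a purely technical nature: one must check that the Stein three-lines theorem remains valid in the Schatten-valued setting (this is standard, following from the characterization $\|A\|_{\mathcal{C}_q} = \sup\{|\mathrm{Tr}(AB)| : \|B\|_{\mathcal{C}_{q'}} \leq 1\}$ and scalar Stein interpolation applied to $s \mapsto \mathrm{Tr}(A_s B)$), and that the analytic family $A_s$ is genuinely well defined and strongly continuous up to the boundary of the strip. The latter is routine when $W_1, W_2$ are simple functions bounded away from zero on their support, and one extends to general $W_j \in L^{2\lambda_0}(M)$ by a standard density argument, using that the resulting bound is continuous in $W_j$ with respect to the $L^{2\lambda_0}$ norm. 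Beyond these standard verifications there is no genuine obstacle, the whole content of the proposition being the observation that the correct interpolation scale for this problem is the Schatten scale rather than the Lebesgue scale.
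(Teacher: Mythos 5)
Your proposal is correct and follows essentially the same route as the paper: both consider the analytic family $W_1^{-s} T_s W_2^{-s}$ (you use $u_j|W_j|^{-s}$ to handle the phases of complex $W_j$ explicitly, while the paper assumes $W_j$ non-negative and simple ``for convenience'' and extends by density), bound the operator norm on $\Re s = 0$, bound the Hilbert--Schmidt norm on $\Re s = -\lambda_0$ using the pointwise kernel bound, and conclude by Stein interpolation in the Schatten scale between $\mathcal{C}_\infty$ and $\mathcal{C}_2$. The only addition in your write-up is the explicit mention of the duality mechanism justifying the Schatten-valued three-lines theorem, which the paper takes for granted.
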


Let us recall briefly the proof of Proposition \ref{prop:Frank-Sabin}.  The result is established by considering the analytic family of operators $S_s = |W_1|^{-1-s} W_1 T_s W_2 |W_2|^{-1-s}$. 
This family has the property that $S_{-1} = W_1 T_{-1} W_2$ and it satisfies the following estimates on the boundary of the strip. For $s = ir$, $r$ real, we have 
$$
\| S_{ir} \|_{L^2(M) \to L^2(M)} \le  \| T_{ir} \|_{L^2(M) \to L^2(M)} \leq M_0 e^{a|r|},
$$
and for $s = -\lambda_0 + ir$, we note that $T_s$ has its Schwartz kernel bounded pointwise by $M_1 e^{a|r|}$ (due to the $L^1\to L^\infty$ bound on $T_s$) and $|W_1|^{-s}$, $|W_2|^{-s}$ are $L^2$ functions, hence $S_s$ is a Hilbert-Schmidt operator  with the Hilbert-Schmidt norm  bounded by $M_1 e^{a|r|} \| W_1 \|_{L^{2\lambda_0}(M)}^{\lambda_0} \| W_2 \|_{L^{2\lambda_0}(M)}^{\lambda_0}$. Interpolating between the operator norm and the Hilbert-Schmidt norm gives us a bound on the Schatten norms, in particular at $s = -1$, where we obtain the Schatten norm at exponent $2\lambda_0$.

\subsection{Strategy}
\label{sec_strategy_main}

The principal idea of the proof of the Euclidean analog of  Theorem~\ref{thm_resolvent_Schatten_laplacian}, which is due to Frank and Sabin \cite[Theorem 12]{Frank_Sabin}, is to establish the following pointwise bound for the Schwartz kernel of the powers of the resolvent $(\Delta-z)^{-\alpha}$, \begin{equation}
\label{eq_int_pointwise_res}
|(\Delta-z)^{-\alpha}(x,y)|  \leq Ce^{C(\text{Im}(\alpha))^2} |z|^{\frac{n-1}{4} - \frac{\text{Re} (\alpha)}{2}}|x-y|^{{\rm Re}(\alpha)-\frac{n+1}{2}}, \quad x,y\in \R^n.
\end{equation}
Here $z\in \C\setminus [0,\infty)$,  $\alpha\in \C$, $\text{Re}(\alpha)\in [\frac{n-1}{2},\frac{n+1}{2}]$. The desired Schatten bound \eqref{toprove} in the Euclidean case is therefore a consequence of \eqref{eq_int_pointwise_res} combined with the H\"older and Hardy--Littelewood--Sobolev inequalities as well as an interpolation argument. 

Unfortunately, the natural analog of the pointwise bound \eqref{eq_int_pointwise_res} does not hold in general, for $z$ close to the spectrum of $\Delta_g$, for asymptotically conic manifolds, essentially because there can be conjugate points for the geodesic flow, and to prove the bound  \eqref{toprove} we have to proceed differently. 

Our strategy of the proof of Theorem ~\ref{thm_resolvent_Schatten_laplacian} is to establish the Schatten norm estimate \eqref{toprove}  for $W_1 (\Delta_g - z)^{-1} W_2$ for $z$ on the negative real axis, and for $z$ just above and below the spectrum, that is, for $W_1 (\Delta_g - (z \pm i0))^{-1} W_2$, for $z > 0$. We then use the Phragm\'en-Lindel\"of theorem to obtain the result on the whole of the complex plane, excluding the origin.  

Let us give the proof of Theorem~\ref{thm_resolvent_Schatten_laplacian}, assuming that it has been established for $z< 0$ and for $z \pm i0$, $z > 0$. Let $W_1, W_2\in L^{2p}(M)$ with $p\in [\frac{n}{2},\frac{n+1}{2}]$,  and let us consider the following bilinear form for $z\in \C\setminus [0,\infty)$, 
\begin{equation}
\label{eq_Bz}
B_z(W_1,W_2):=W_1(\Delta_g-z)^{-1}W_2.
\end{equation}
When $z\in (0,\infty)$, we extend the definition of $B_z$ by taking the outgoing resolvent $(\Delta_g-(z+i0))^{-1}$ in \eqref{eq_Bz}. Thus, we know that  for $z \in \RR \setminus \{ 0 \}$, $B_z$ is a bounded bilinear form
\[
B_z: L^{2p}(M)\x L^{2p}(M)\to \mc{C}_{q}(L^2(M)), \quad p\in \bigg[\ndemi,\frac{n+1}{2}\bigg], \quad q=\frac{p(n-1)}{n-p},
\]
such that 
\begin{equation}
\label{eq_Bz_1}
\| B_z(W_1,W_2)\|_{\mc{C}_{q}}\le C|z|^{-1+\frac{n}{2p}} \|W_1\|_{L^{2p}(M)}\|W_2\|_{L^{2p}(M)}.
\end{equation}
We now complete the proof of Theorem \ref{thm_resolvent_Schatten_laplacian} by a Phragm\'en-Lindel\"of argument. In doing so,  let $W_1,W_2\in C_0^\infty(M)$.  We claim that the function $H(z):= B_z(W_1,W_2)$
is holomorphic in  $\Im z>0$ with values in $\mc{C}_q(L^2(M))$  such that 
\[
\| H(z)\|_{\mc{C}_q}\le C(|z|^{-1/2}+|z|^{1/2}).
\]
Indeed, for $\Im  z >0$, the operator $W_1(\Delta_g-z)^{-1}W_2: L^2(M)\to H^2(M)\cap \mathcal{E}'(K)$ is bounded  where $K$ is a compact set containing the support of  $W_1$. Furthermore, it depends holomorphically on $z$ with $\Im z>0$, and satisfies the bound 
\[
\|W_1(\Delta_g-z)^{-1}W_2 \|_{\mathcal{L}(L^2(M), H^2(M))}\le C( |z|^{-1/2} + |z|^{1/2}), \quad \Im z\ge 0, \quad z\ne 0,  
\]
see \cite{Melrose} for intermediate values of $z$, \cite{Vasy_Zworski_2000} for $|z| \to \infty$ and \cite[Prop. 1.26]{RoTa} for $|z| \to 0$. Now the embedding $H^2(M)\cap \mathcal{E}'(K)\to L^2(M)$ is an operator in $\mc{C}_{n/2+\eps}$ for all $\eps>0$ in view of the Weyl law for the Laplacian on a compact manifold. Since $q>n/2$, we deduce the claim. 

The function $H(z)$ is continuous for $\Im z\ge 0$, $z\ne 0$, with values in $\mc{C}_q(L^2(M))$ and to avoid the problem at $z=0$, we consider the map
\[
F(z):=\cjg H(e^{z}),T\cjd e^{(1-\frac{n}{2p})z}
\]
for a fixed $T\in \mc{C}_{q'}(L^2(M))$ with norm $\|T\|_{\mc{C}_{q'}}=1$.  Here $\frac{1}{q'}+\frac{1}{q}=1$ and  the product is the duality pairing between the Banach space $\mc{C}_{q}$ and its dual $\mc{C}_{q'}$.
 Then $F(z)$ is holomorphic in $\Im z\in (0,\pi)$, continuous on the closure, and enjoys the bounds 
\[
\begin{gathered} 
|F(z)|\leq C e^{C|z|} \textrm{ for } 0\leq \Im z\leq \pi, \\ 
 |F(z)|\leq C  \|W_1\|_{L^{2p}(M)}\|W_2\|_{L^{2p}(M)}  \textrm{ for } \Im z\in \{0,\pi\}
\end{gathered}
\] 
in view of \eqref{eq_Bz_1}.  Applying the Phragm\'en-Lindel\"of principle,  we deduce 
that $|F(z)|\leq C\|W_1\|_{L^{2p}(M)}\|W_2\|_{L^{2p}(M)}$ for all $z\in \C$ such that $0\le \Im z\le \pi$,  and therefore,  
\[
\|H(z)\|_{\mc{C}_q}\leq  C|z|^{-1+\frac{n}{2p}}\|W_1\|_{L^{2p}(M)}\|W_2\|_{L^{2p}(M)}, \quad \Im z\ge 0, \quad z\ne 0. 
\]
By a density argument, we obtain the bound \eqref{toprove} for $\Im z\ge 0$, $z\ne 0$. By considering the adjoint of the operator $B_z$, we complete the proof of Theorem \ref{thm_resolvent_Schatten_laplacian}.

This argument reduces the problem to proving estimate \eqref{toprove} for $z \in \RR \setminus \{ 0 \}$. 
We find it convenient to first prove the corresponding estimate for the spectral measure given in Theorem \ref{prop:spectral measure Schatten estimate}.  The proof of Theorem \ref{prop:spectral measure Schatten estimate} relies crucially on the $TT^*$ structure of the spectral measure. 

When $z\in (-\infty,0)$ and $p\in (\frac{n}{2},\frac{n+1}{2}]$, the Schatten norm estimate \eqref{toprove} is a direct consequence of Theorem~\ref{prop:spectral measure Schatten estimate}, and at the endpoint case $p=\frac{n}{2}$,   the Schatten norm estimate \eqref{toprove} follows from the heat kernel estimates due to Grigor'yan \cite{Gr} and Varopoulos \cite{Va}.

Establishing the Schatten norm estimate \eqref{toprove}  for $W_1 (\Delta_g - (z \pm i0))^{-1} W_2$ with $z>0$ represents the main difficulty in the proof of Theorem  \ref{thm_resolvent_Schatten_laplacian}. When doing so, 
following  \cite{GuHa},  \cite{GHS}, and \cite{HaZh}, we use a microlocal partition of the identity $\sum_{i=1}^N Q_i(\eta)= \Id$, where $Q_i(\eta)$  are pseudodifferential operators depending on the energy parameter $0<\eta\sim |z|^{1/2}$, constructed  in \cite{GHS}.   Splitting up the operator $W_1 (\Delta_g - (z \pm i0))^{-1} W_2$ by means of the partition of the identity, we are led to estimate the individual terms $W_1Q_i(\eta)^*(\Delta_g - (z \pm i0))^{-1}Q_j(\eta)W_2$, and here the most interesting contributions arise when $i=j$.  When handling those, we proceed by establishing pointwise bounds for the Schwartz kernel of the operator 
\[
Q_i(\eta)^*\phi\bigg(\frac{\Delta_g}{z}\bigg)(\Delta_g - (z \pm i0))^{-s}Q_j(\eta),\quad \text{Re}\,s\in \bigg[\frac{n-1}{2},\frac{n+1}{2}\bigg],
\]
analogous to the Euclidean estimates \eqref{eq_int_pointwise_res}. Here $\phi$ is a cut-off near $1$.


\section{Schatten norm estimates on the spectral measure. Proof of Theorem \ref{prop:spectral measure Schatten estimate}}

\label{sec:spectral measure estimates}

Our starting point for the proof is the operator partition of unity, $\Id = \sum_{i=1}^N Q_i(\eta)$,  depending on $\eta > 0$, constructed  in \cite{GHS}. This partition of unity enjoys the following estimates, in particular: there exists $\delta>0$  sufficiently small but fixed such that for all $k=0,1,2,\dots,$ there is $C_k>0$ such that for all $m,m'\in M$, we have 
\begin{equation}
\label{eq_sec_measure_1} 
\begin{aligned}
\Big|\pl_{\la}^k\big(Q_i(\eta)^*dE_{\sqrt{\Delta_g}}(\la)Q_i(\eta)\big)(m,m')\Big|\leq C_k\la^{n-1-k}(1+\la d(m,m'))^{-\frac{(n-1)}{2}+k}, \\ \lambda\in [(1-\delta)\eta,(1+\delta)\eta],
\end{aligned}
\end{equation}
with $d(\cdot,\cdot)$ being the Riemannian distance on $M$.  We say more about this partition of the identity in Section \ref{sec:partition of unity} below;  here, we can use results of  \cite{Chen} and \cite{GHS} as a `black box'.   Then for all  $\lambda\in [(1-\delta/2)\eta,(1+\delta/2)\eta]$,    we use the partition of unity to decompose the spectral measure sandwiched between two $L^{2p}$ functions: 
\begin{equation}
W_1 dE_{\sqrt{\Delta_g}}(\lambda)W_2 = \sum_{i, j = 1}^N W_1 Q_i(\eta)^* dE_{\sqrt{\Delta_g}}(\lambda) Q_j(\eta) W_2.
\label{Q-decomp}\end{equation}

Let $p \in [1, \frac{n+1}{2}]$ and $q = \frac{p(n-1)}{n-p}\in [1,n+1]$. 
In the first step, we shall prove microlocalized estimates of the form 
\begin{equation}
\label{spec-meas-bound-QiQi}
\| W_1 Q_i(\eta)^* dE_{\sqrt{\Delta_g}}(\lambda) Q_i(\eta) W_2 \|_{\mathcal{C}_q} \leq C \lambda^{-1 + \frac{n}{p}} \| W_1 \|_{L^{2p}(M)} \| W_2 \|_{L^{2p}(M)},
\end{equation}
for the diagonal ($i=j$) terms of the decomposition \eqref{Q-decomp}. In doing so, we shall follow  \cite[Proof of Theorem 2]{Frank_Sabin} and start by showing \eqref{spec-meas-bound-QiQi} at the endpoints $p=\frac{n+1}{2}$ and $p=1$, i.e. 
\begin{equation}
\label{spec-meas-bound-QiQi_end_1}
\| W_1 Q_i(\eta)^* dE_{\sqrt{\Delta_g}}(\lambda) Q_i(\eta) W_2 \|_{\mathcal{C}_{n+1}} \leq C \lambda^{\frac{n-1}{n+1}} \| W_1 \|_{L^{n+1}(M)} \| W_2 \|_{L^{n+1}(M)},
\end{equation}
and 
\begin{equation}
\label{spec-meas-bound-QiQi_end_2}
\| W_1 Q_i(\eta)^* dE_{\sqrt{\Delta_g}}(\lambda) Q_i(\eta) W_2 \|_{\mathcal{C}_{1}} \leq C \lambda^{n-1} \| W_1 \|_{L^{2}(M)} \| W_2 \|_{L^{2}(M)},
\end{equation}
respectively. Once the estimates \eqref{spec-meas-bound-QiQi_end_1} and \eqref{spec-meas-bound-QiQi_end_2} have been established,  the bound \eqref{spec-meas-bound-QiQi} follows by a complex interpolation argument  applied to the analytic family of operators $\zeta\mapsto W_1^{\frac{2}{n+1}+\zeta\frac{n-1}{n+1}} Q_i(\eta)^* dE_{\sqrt{\Delta_g}} (\lambda)Q_i(\eta)W_2^{\frac{2}{n+1}+\zeta\frac{n-1}{n+1}}$ in the strip $0\le \Re \zeta\le 1$,  with $W_j\ge 0$ being simple functions such that $\|W_j\|_{L^2(M)}=1$, $j=1,2$, see \cite[Theorem 2.9]{Simon_trace}.

Now to prove the estimate \eqref{spec-meas-bound-QiQi_end_1}, we shall consider the following family of operators,  
\[
T_s := Q_i(\eta)^* \phi\bigg(\frac{\sqrt{\Delta_g}}{\lambda}\bigg) \chi_+^s(\lambda - \sqrt{\Delta_g}) Q_i(\eta), \quad -\frac{(n+1)}{2} \leq \Re s \leq 0, 
\]
introduced in \cite{Chen} and \cite[Definition 3.2]{GHS}.  Here  $\phi\in C^\infty_0((1-\delta/4,1+\delta/4))$ is such that $\phi(t)=1$ in a neighborhood of $t=1$,  and  $\chi_+^s$ is the family of distributions on $\R$, entire analytic  in $s\in \C$ and such that 
\[
\chi_+^s(\lambda)=\frac{\lambda_+^s}{\Gamma(s+1)}, \quad \Re s>-1,
\]
where $\lambda_+=\max(\lambda,0)$, see \cite[Section 3.2]{Hormander_books_1}. Note that at least formally, we have
\[
\chi^0_+(\lambda-\sqrt{\Delta_g})=E_{\sqrt{\Delta_g}}(\lambda), \quad \chi_+^{-k}(\lambda-\sqrt{\Delta_g})=\bigg(\frac{d}{d\lambda}\bigg)^{k-1}dE_{\sqrt{\Delta_g}}(\lambda),\quad k=1,2,\dots.
\]
Recall from \cite[Definition 3.2]{GHS} that $T_s$ is the operator whose Schwartz kernel is given by 
\begin{equation}
\label{eq_def_T_s}
\begin{aligned}
\big(Q_i(\eta)^* \phi\bigg(\frac{\sqrt{\Delta_g}}{\lambda}\bigg) &\chi_+^s(\lambda - \sqrt{\Delta_g}) Q_i(\eta)\big)(m,m')\\
&=
\int\chi_+^{k+s}(\lambda-\mu)\p_\mu^k\bigg (Q_i(\eta)^*\phi \bigg(\frac{\mu}{\lambda}\bigg) dE_{\sqrt{\Delta_g}}(\mu) Q_i(\eta)\bigg)(m,m')d\mu,
\end{aligned}
\end{equation}
where $k\in \N$ is such that $\Re s+k>-1$. As $\mu\in [\eta(1-\delta), \eta(1+\delta)]$ for $\lambda\in [(1-\delta/2)\eta,(1+\delta/2)\eta]$ and $\mu/\lambda\in \supp(\phi)$, thanks to the estimates \eqref{eq_sec_measure_1} the integral in \eqref{eq_def_T_s} is well defined.

As explained in \cite{GHS}, the family of operators $T_s $ is analytic in the sense of Stein in the  strip $-\frac{(n+1)}{2} \leq \Re s \leq 0$. When $\Re s = 0$,  we have
\[
\| T_s\|_{L^2(M)\to L^2(M)}\le C e^{\frac{\pi |s|}{2}},
\]
and relying on the estimates \eqref{eq_sec_measure_1} it was shown in  \cite{Chen} and  \cite{GHS}  that when $\Re s = -\frac{(n+1)}{2}$, we have
\[
\| T_s \|_{L^1(M) \to L^\infty(M)} \leq C (1 + |r|) e^{\frac{\pi |r|}{2}} \lambda^{\frac{n-1}{2}}, \quad s = -\frac{(n+1)}{2} + ir, \ r \in \RR.
\]
Applying Proposition~\ref{prop:Frank-Sabin}, we get, for  any two complex valued functions $W_1, W_2\in L^{n+1}(M)$,
\begin{align*}
W_1 T_{-1} W_2 &= W_1 Q_i(\eta)^* \phi\bigg(\frac{\sqrt{\Delta_g}}{\lambda}\bigg) \chi_+^{-1}(\lambda - \sqrt{\Delta_g}) Q_i(\eta) W_2\\
& = W_1 Q_i(\eta)^* dE_{\sqrt{\Delta_g}}(\lambda) Q_i(\eta) W_2
\end{align*}
is in the Schatten $\mathcal{C}_{n+1}$ class and \eqref{spec-meas-bound-QiQi_end_1} holds.

To show \eqref{spec-meas-bound-QiQi_end_2}, we recall from \cite{GHS} that we have a pointwise kernel bound on the (microlocalized) spectral measure, 
\begin{equation}
\| Q_i(\eta)^* dE_{\sqrt{\Delta_g}}(\lambda) Q_i(\eta) \|_{L^1(M) \to L^\infty(M)} \leq C \lambda^{n-1}.
\label{sm-Poisson}\end{equation}
Also, we have 
\begin{equation}
\label{eq_spectral_measure_dec}
dE_{\sqrt{\Delta_g}}(\lambda) = (2\pi)^{-1} P(\lambda) P^*(\lambda), 
\end{equation}
where $P(\lambda):L^2(\p M)\to L^{r}(M)$, $r\in [\frac{2(n+1)}{n-1},\infty]$,  is the Poisson operator, see \cite{GHS}.  
Using the $T^*T$ trick, it follows from \eqref{sm-Poisson} and \eqref{eq_spectral_measure_dec} that  
\[
\|Q_i(\eta)^* P(\lambda)\|_{L^2(\partial M) \to L^\infty(M)}\le C \lambda^{\frac{n-1}{2}}. 
\]
The Schwartz kernel $Q_i(\eta)^* P(\lambda)(m,m')$ of the operator $Q_i(\eta)^* P(\lambda)$ satisfies therefore,  
\[
\|Q_i(\eta)^* P(\lambda)(m,\cdot) \|_{L^2(\p M)}\le C \lambda^{\frac{n-1}{2}}
\]
for almost all $m\in M$.  Thus, for any $W_1 \in L^2(M)$, the operator $W_1 Q_i(\eta)^* P(\lambda):L^2(\p M)\to L^2(M)$ is Hilbert-Schmidt with the norm bounded by $ C \lambda^{\frac{n-1}{2}} \| W_1 \|_{L^2(M)}$. Taking adjoints, we find that $P(\lambda)^* Q_i(\eta) W_2$ is a Hilbert-Schmidt operator  with norm bounded by $C \lambda^{\frac{n-1}{2}} \| W_2 \|_{L^2(M)}$. Therefore, $(2\pi)^{-1}$ times the composition of these two operators, which is precisely $W_1 Q_i(\eta)^* dE_{\sqrt{\Delta}}(\lambda) Q_i(\eta) W_2$, is of trace class and \eqref{spec-meas-bound-QiQi_end_2} follows.

In the second step, we shall bound the Schatten norm of the off-diagonal ($i \neq j$) terms  in the decomposition \eqref{Q-decomp}, i.e. we shall prove the following estimate, 
\begin{equation}
\label{spec-meas-bound-QiQj}
\| W_1 Q_i(\eta)^* dE_{\sqrt{\Delta_g}}(\lambda) Q_j(\eta) W_2 \|_{\mathcal{C}_q} \leq C \lambda^{-1 + \frac{n}{p}} \| W_1 \|_{L^{2p}(M)} \| W_2 \|_{L^{2p}(M)}.
\end{equation}
 As above, we shall exploit the $ T^*T$ structure of the spectral measure. 

Let $T: L^2(M)\to L^2(\p M)$ be a compact operator and $q\ge 1$. Then $T^*T\in \mathcal{C}_q(L^2(M))$  if and only if $T\in \mathcal{C}_{2q}(L^2(M),L^2(\p M))$, and moreover, $\| T^* T\|_{ \mathcal{C}_q}=\|T\|_{ \mathcal{C}_{2q}}^2$. This is a consequence of the following equality for the singular values,
\begin{equation}
\label{eq_singular_values_103}
\mu_k(T^*T)=\mu_k(T)^2.
\end{equation}

Moreover, if $T_1, T_2$ are in $\mathcal{C}_{2q}(L^2(M),L^2(\p M))$, then $T_1^* T_2$ is in $\mathcal{C}_{q}(L^2(M))$, and 
\begin{equation}
\label{T^*T-inequality}
\| T_1^* T_2 \|_{ \mathcal{C}_q}^q \leq	\| T_1^* T_1 \|_{ \mathcal{C}_q}^q + \| T_2^* T_2 \|_{ \mathcal{C}_q}^q,
\end{equation}
see for example \cite{McCarthy}. 
Using \eqref{eq_spectral_measure_dec}, we write
\begin{equation}
\label{eq_decom_with_poten}
W_1 Q_i(\eta)^* dE_{\sqrt{\Delta_g}}(\lambda) Q_j(\eta) W_2  =(2\pi)^{-1} T_1^* T_2, 
\end{equation}
where $T_1 = P(\lambda)^* Q_i(\eta) \overline{W}_1$, and  $ T_2 = P(\lambda)^* Q_j(\eta) W_2$.
Now it follows from \eqref{spec-meas-bound-QiQi} that $T_1^*T_1 \in \mathcal{C}_q(L^2(M))$, $T_2^*T_2 \in \mathcal{C}_q(L^2(M))$, and we have
\begin{align*}
\|T_1^*T_1\|_{\mathcal{C}_q}\le C\lambda^{-1+\frac{n}{p}}\|W_1\|_{L^{2p}(M)}^2,\quad 
\|T_2^*T_2\|_{\mathcal{C}_q}\le C\lambda^{-1+\frac{n}{p}}\|W_2\|_{L^{2p}(M)}^2.
\end{align*}
By the discussion above, this is equivalent to the fact that $T_1\in C_{2q}(L^2(M), L^2(\p M))$
and  $T_2\in C_{2q}(L^2(M), L^2(\p M))$. It follows from \eqref{eq_decom_with_poten} and discussion above that $W_1 Q_i(\eta)^* dE_{\sqrt{\Delta_g}}(\lambda) Q_j(\eta) W_2\in \mathcal{C}_{q}(L^2(M))$, and using \eqref{T^*T-inequality}, we get that 
\[
\|W_1Q_i(\eta)^* dE_{\sqrt{\Delta_g}}(\lambda) Q_j(\eta) W_2\|_{\mathcal{C}_{q}}\le C\lambda^{-1+\frac{n}{p}}(\|W_1\|^2_{L^{2p}(M)} +\|W_2\|^2_{L^{2p}(M)}).
\]
Thus, \eqref{spec-meas-bound-QiQj} follows by bilinearity in $W_1,W_2$. This completes the proof of Theorem \ref{prop:spectral measure Schatten estimate}.

\section{Consequences of the spectral measure estimates for $p\in (\frac{n}{2},\frac{n+1}{2}]$ and their analogues at the endpoint $p=\frac{n}{2}$}

\label{sec_consequences}

\subsection{Consequences of the spectral measure Schatten norm estimate}

\label{subsections_cons_2}

Using Theorem~\ref{prop:spectral measure Schatten estimate} and Minkowski's integral inequality, we can deduce some Schatten estimates on the resolvent. In this subsection, we only treat the case $p > \frac{n}{2}$. 

The first result applies for $z$ in any sector excluding the positive real axis. 

\begin{prop}
\label{prop:resolvent-away-from-spectrum1} 
Let $p \in (\frac{n}{2}, \frac{n+1}{2}]$, and suppose $W_1, W_2\in L^{2p}(M)$.  Let $\epsilon > 0$ be arbitrary.  Then for $z \in \CC$ such that $z \neq 0, \arg z \in [\epsilon, 2\pi - \epsilon]$, the sandwiched resolvent $W_1 (\Delta_g - z)^{-1}W_2$ is in the Schatten class $\mathcal{C}_q (L^2(M))$ with $q = \frac{p(n-1)}{n-p} \in (n-1, n+1]$, and we have 
\[
\|W_1 (\Delta_g - z)^{-1}W_2\|_{\mathcal{C}_q}\le C |z|^{-1+\frac{n}{2p}}\| W_1 \|_{L^{2p}(M)} \| W_2 \|_{L^{2p}(M)},
\]
where $C$ depends on $p$, $\epsilon$ and $(M,g)$, but not $z$. 
\end{prop}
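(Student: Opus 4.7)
The plan is to express the resolvent as a spectral integral for $\sqrt{\Delta_g}$ and then apply the spectral-measure Schatten bound of Theorem~\ref{prop:spectral measure Schatten estimate} pointwise in $\lambda$, combining via Minkowski's integral inequality. Explicitly, the functional calculus gives
\[
(\Delta_g-z)^{-1}=\int_0^\infty \frac{1}{\lambda^2-z}\, dE_{\sqrt{\Delta_g}}(\lambda)
\]
for every $z\in\CC\setminus[0,\infty)$, so sandwiching with $W_1,W_2$ and applying Minkowski's integral inequality for the Schatten norm $\|\cdot\|_{\mathcal{C}_q}$ yields
\[
\|W_1(\Delta_g-z)^{-1}W_2\|_{\mathcal{C}_q}\le\int_0^\infty\frac{\|W_1\,dE_{\sqrt{\Delta_g}}(\lambda)\,W_2\|_{\mathcal{C}_q}}{|\lambda^2-z|}\,d\lambda.
\]
By Theorem~\ref{prop:spectral measure Schatten estimate} the numerator is bounded by $C\lambda^{-1+n/p}\|W_1\|_{L^{2p}(M)}\|W_2\|_{L^{2p}(M)}$, reducing the matter to the scalar integral $\int_0^\infty \lambda^{-1+n/p}|\lambda^2-z|^{-1}\,d\lambda$.

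For $z$ with $\arg z\in[\epsilon,2\pi-\epsilon]$ I would first establish the elementary lower bound $|\lambda^2-z|\ge c_\epsilon(\lambda^2+|z|)$: this is immediate from the triangle inequality when $\lambda^2\le|z|/2$ or $\lambda^2\ge 2|z|$, and in the remaining regime $\lambda^2\sim|z|$ it follows from the fact that the sector $\arg z\in[\epsilon,2\pi-\epsilon]$ sits at distance at least $c_\epsilon|z|$ from the positive real axis. Substituting $\lambda=t|z|^{1/2}$ then gives
\[
\int_0^\infty\frac{\lambda^{-1+n/p}}{\lambda^2+|z|}\,d\lambda=|z|^{-1+n/(2p)}\int_0^\infty\frac{t^{-1+n/p}}{t^2+1}\,dt,
\]
and the $t$-integral converges because the integrand behaves like $t^{-1+n/p}$ near $0$ (integrable since $n/p>0$) and like $t^{-3+n/p}$ at infinity (integrable precisely when $p>n/2$). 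This delivers the asserted bound $|z|^{-1+n/(2p)}\|W_1\|_{L^{2p}(M)}\|W_2\|_{L^{2p}(M)}$.

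The main technical point is the rigorous justification of Minkowski's inequality in this operator-valued setting, which I plan to handle by the standard duality argument: fix an arbitrary $T\in\mathcal{C}_{q'}(L^2(M))$ with $\|T\|_{\mathcal{C}_{q'}}=1$, pair $W_1(\Delta_g-z)^{-1}W_2$ with $T$ via the trace, and apply scalar Minkowski to the resulting complex integral using the trace duality $|\mathrm{tr}(T^*A)|\le\|A\|_{\mathcal{C}_q}\|T\|_{\mathcal{C}_{q'}}$ with $A=W_1\,dE_{\sqrt{\Delta_g}}(\lambda)\,W_2$. The restriction $p>n/2$ arises solely from the convergence of the scalar integral at infinity, so the endpoint $p=n/2$ genuinely requires the separate treatment announced in Section~\ref{sec_consequences} via heat-kernel bounds rather than this approach.
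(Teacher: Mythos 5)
Your proposal is correct and follows essentially the same route as the paper: expand the resolvent as a spectral integral in $\sqrt{\Delta_g}$, apply Theorem~\ref{prop:spectral measure Schatten estimate} under the integral via Minkowski's inequality for the $\mathcal{C}_q$ norm, and verify that the scalar integral $\int_0^\infty |\lambda^2-z|^{-1}\lambda^{-1+n/p}\,d\lambda$ is bounded by $C_\epsilon|z|^{-1+n/(2p)}$ for $p>n/2$ and $\arg z$ away from the positive real axis. You supply a bit more detail than the paper on the sector lower bound $|\lambda^2-z|\gtrsim_\epsilon\lambda^2+|z|$ and on the duality justification of Minkowski, but the argument is the same.
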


\begin{proof}
We express the operator $W_1 (\Delta_g - z)^{-1}W_2$ as 
$$
W_1 (\Delta_g - z)^{-1}W_2 = \int_0^\infty (\lambda^2 - z)^{-1} W_1 dE_{\sqrt{\Delta_g}}(\lambda) W_2 d\lambda.
$$
The result follows by estimating the Schatten norm of $W_1 dE_{\sqrt{\Delta_g}}(\lambda) W_2 $ using Theorem~\ref{prop:spectral measure Schatten estimate} and noting that provided $p > \frac{n}{2}$, we have
$$
\int_0^\infty |\lambda^2 - z|^{-1} \lambda^{-1+\frac{n}{p}} \, d\lambda \leq C |z|^{-1+\frac{n}{2p}},
$$
where $C$ depends on $p$ and $\epsilon$ but does not depend on $z$ in the given sector. 
\end{proof}

In a similar manner we obtain `elliptic' estimates on the resolvent, where we remove the singularity in the spectral multiplier. In this way we can obtain estimates on the positive real axis. To state these, we fix a function $\phi : [0, \infty) \to [0, 1]$ such that $\phi(t) = 1$ for $t$ in a neighbourhood of $t=1$, and has support in a slightly bigger neighborhood of $t=1$. 

\begin{prop}
\label{prop:resolvent-near-spectrum-away-from-singularity1} Let $p \in (\frac{n}{2}, \frac{n+1}{2}]$, and suppose $W_1, W_2\in L^{2p}(M)$. Then for $z  \in \C\setminus\{0\}$,  the operator $W_1 \big(1 - \phi\big)\big(\frac{\Delta_g}{|z|}\big) (\Delta_g - z)^{-1}W_2$ is in the Schatten class $\mathcal{C}_q(L^2(M))$ with $q = \frac{p(n-1)}{n-p} \in (n-1, n+1]$, and we have
\[
\bigg\|W_1 \bigg(1 - \phi\bigg)\bigg(\frac{\Delta_g}{|z|}\bigg) (\Delta_g - z)^{-1}W_2 \bigg\|_{\mathcal{C}_q}\le C |z|^{-1+\frac{n}{2p}}\| W_1 \|_{L^{2p}(M)} \| W_2 \|_{L^{2p}(M)},
\]
where $C$ depends on $p$ and on $(M,g)$, but not $z$. 
\end{prop}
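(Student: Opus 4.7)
The plan is to mimic the proof of Proposition~\ref{prop:resolvent-away-from-spectrum1}, writing the operator as an integral against the spectral measure and applying Minkowski's integral inequality together with Theorem~\ref{prop:spectral measure Schatten estimate}. Here the role played by the sector hypothesis in Proposition~\ref{prop:resolvent-away-from-spectrum1} will be played by the spectral cutoff $(1-\phi)(\Delta_g/|z|)$, which uniformly separates us from the singularity of $(\lambda^2 - z)^{-1}$ at $\lambda^2 = z$, whatever the argument of $z$.

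By the functional calculus for $\sqrt{\Delta_g}$, for every $z \in \CC \setminus \{0\}$,
\[
W_1 (1-\phi)(\Delta_g/|z|) (\Delta_g - z)^{-1} W_2 = \int_0^\infty \frac{(1-\phi)(\lambda^2/|z|)}{\lambda^2 - z}\, W_1\, dE_{\sqrt{\Delta_g}}(\lambda)\, W_2,
\]
where the scalar factor is bounded in $\lambda$ since $\supp(1-\phi)$ is disjoint from a fixed neighbourhood of $1$. Applying Minkowski's integral inequality in the Banach space $\mathcal{C}_q$ together with Theorem~\ref{prop:spectral measure Schatten estimate} yields
\[
\|W_1 (1-\phi)(\Delta_g/|z|) (\Delta_g - z)^{-1} W_2\|_{\mathcal{C}_q} \le C\, I(z)\, \|W_1\|_{L^{2p}(M)} \|W_2\|_{L^{2p}(M)},
\]
where
\[
I(z) = \int_0^\infty \frac{|(1-\phi)(\lambda^2/|z|)|}{|\lambda^2 - z|}\, \lambda^{-1 + \frac{n}{p}}\, d\lambda.
\]

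Next I would estimate $I(z)$ by splitting the domain. Choose $\delta > 0$ such that $\phi = 1$ on $[1-\delta, 1+\delta]$; the integrand is then supported in $\{\lambda^2 \le (1-\delta)|z|\} \cup \{\lambda^2 \ge (1+\delta)|z|\}$. On the first region the reverse triangle inequality gives $|\lambda^2 - z| \ge ||z| - \lambda^2| \ge \delta |z|$, and on the second region $|\lambda^2 - z| \ge \lambda^2 - |z| \ge \frac{\delta}{1+\delta} \lambda^2$. Hence
\[
I(z) \le C \Bigl( |z|^{-1} \int_0^{\sqrt{(1-\delta)|z|}} \lambda^{-1 + \frac{n}{p}}\, d\lambda + \int_{\sqrt{(1+\delta)|z|}}^\infty \lambda^{-3 + \frac{n}{p}}\, d\lambda \Bigr) \le C |z|^{-1 + \frac{n}{2p}}.
\]
Convergence at $0$ is automatic since $n/p > 0$, and convergence at $\infty$ uses precisely the hypothesis $p > n/2$, which makes the exponent $-3 + n/p$ strictly less than $-1$.

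The only obstacle is really this endpoint issue: the integral at infinity diverges exactly when $p = n/2$, which is why this proposition excludes that value of $p$ and the endpoint must be handled by a different argument (in the present paper, via heat-kernel bounds of Grigor'yan and Varopoulos, as indicated in Section~\ref{sec_strategy_main}). The cutoff $(1-\phi)(\Delta_g/|z|)$ allows the bound to hold for \emph{all} $z \in \CC \setminus \{0\}$ with no restriction on $\arg z$, precisely because the pole of $(\lambda^2 - z)^{-1}$ at $\lambda^2 = z$ is removed from the spectral integration.
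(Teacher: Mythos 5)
Your proof is correct and follows the same route as the paper: write the operator as a spectral integral, apply Minkowski's inequality in $\mathcal{C}_q$ together with Theorem~\ref{prop:spectral measure Schatten estimate}, and bound the resulting scalar integral. The paper leaves the final integral estimate as "straightforward to check"; you have simply supplied the (correct) details of that computation, including the role of the hypothesis $p>n/2$ at the upper endpoint.
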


\begin{proof}
Again we express the operator using an integral over the spectral measure, and estimate the Schatten norm of the spectral measure using Proposition~\ref{prop:spectral measure Schatten estimate} and Minkowski's integral inequality. This time we obtain the integral 
$$
\int_0^\infty |\lambda^2 - z|^{-1} \bigg(1 - \phi\bigg)\bigg(\frac{\lambda^2}{|z|}\bigg) \lambda^{-1+\frac{n}{p}} \, d\lambda 
$$
and it is straightforward to check that this is bounded by $C |z|^{-1+\frac{n}{2p}}$ uniformly in $z$. 
\end{proof}

\subsection{Analogues at the endpoint  $p = \frac{n}{2}$} 

In the case $p=\frac{n}{2}$,  the arguments used in the proofs of Propositions~\ref{prop:resolvent-away-from-spectrum1} and \ref{prop:resolvent-near-spectrum-away-from-singularity1} are no longer valid and need to be replaced. In view of the Phragm\'en--Lindel\"of  argument, explained in Section \ref{sec_strategy_main},  we only need to do this for $z$ negative in the case of Proposition~\ref{prop:resolvent-away-from-spectrum1} and $z$ positive in the case of Proposition~\ref{prop:resolvent-near-spectrum-away-from-singularity1}. To this end we prove the following two results. 

\begin{prop}
\label{negativez}
Let $p=\frac{n}{2}$. There is $C>0$ such that  for all $z<0$ and for all $W_1,W_2\in L^{n}(M)$,  the operator $W_1(\Delta_g-z)^{-1}W_2\in \mathcal{C}_{n-1}(L^2(M))$ and we have
\begin{equation}
\label{eq_negativez}
\|W_1(\Delta_g-z)^{-1}W_2 \|_{\mathcal{C}_{n-1}}\le C\|W_1\|_{L^n(M)}\|W_2\|_{L^n(M)}.
\end{equation}
\end{prop}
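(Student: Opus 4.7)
The plan is to derive the inequality from the ultracontractive heat kernel bound
\[
p_t(m,m')\leq C\,t^{-n/2}, \qquad t>0,\ m,m'\in M,
\]
of Grigor'yan \cite{Gr} and Varopoulos \cite{Va}, available on $(M,g)$ since asymptotically conic manifolds support a Euclidean-type Sobolev inequality. Write $z=-\mu^2$ with $\mu>0$ and set $A:=(\Delta_g+\mu^2)^{-1}$. Using the factorization $W_1AW_2=(W_1A^{1/2})(A^{1/2}W_2)$ together with the Schatten H\"older inequality $\|BC\|_{\mathcal{C}_{n-1}}\leq \|B\|_{\mathcal{C}_{2(n-1)}}\|C\|_{\mathcal{C}_{2(n-1)}}$ and the $T^*T$ identities $\|W_1 A^{1/2}\|_{\mathcal{C}_{2(n-1)}}^{\,2}=\|W_1 A\bar W_1\|_{\mathcal{C}_{n-1}}$, $\|A^{1/2}W_2\|_{\mathcal{C}_{2(n-1)}}^{\,2}=\|\bar W_2 A W_2\|_{\mathcal{C}_{n-1}}$, I reduce the bilinear bound to the diagonal estimate
\[
\|W A\bar W\|_{\mathcal{C}_{n-1}(L^2(M))}\leq C\|W\|_{L^n(M)}^{\,2}, \qquad W\in L^n(M).
\]
Writing $W=e^{i\varphi}|W|$, the unitary conjugation identity $WA\bar W=e^{i\varphi}(|W|A|W|)e^{-i\varphi}$ further reduces matters to the positive statement
\[
\|V^{1/2}AV^{1/2}\|_{\mathcal{C}_{n-1}(L^2(M))}\leq C\|V\|_{L^{n/2}(M)}, \qquad V:=|W|^2\geq 0.
\]

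The main step is the Cwikel-Lieb-Rozenblum estimate on $(M,g)$. From the heat kernel bound above, the classical Lieb-Rozenblum-Cwikel argument (via Lieb's heat kernel inequality or the path-integral representation) produces the eigenvalue count
\[
N\bigl(\Delta_g-\lambda V,\,(-\infty,0]\bigr)\leq C_n\,\lambda^{n/2}\|V\|_{L^{n/2}(M)}^{\,n/2}, \qquad \lambda>0,
\]
which, via the operator inequality $\Delta_g+\mu^2-\lambda V\geq \Delta_g-\lambda V$, remains valid for $\Delta_g+\mu^2$ uniformly in $\mu>0$. Applying the Birman-Schwinger principle to the shifted operator identifies the left-hand side with $\#\{k:\mu_k(V^{1/2}AV^{1/2})\geq \lambda^{-1}\}$, and setting $t=\lambda^{-1}$ yields the weak-Schatten bound
\[
\|V^{1/2}AV^{1/2}\|_{\mathcal{C}_{n/2,\infty}(L^2(M))}\leq C_n\|V\|_{L^{n/2}(M)},
\]
uniformly in $\mu>0$, where $\mathcal{C}_{n/2,\infty}$ denotes the weak Schatten class $\{T:\mu_k(T)=O(k^{-2/n})\}$.

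Finally, the continuous embedding $\mathcal{C}_{n/2,\infty}\hookrightarrow \mathcal{C}_{n-1}$ is available for $n\geq 3$ because $2(n-1)/n>1$: from $\mu_k\leq C_0 k^{-2/n}$ one obtains
\[
\|T\|_{\mathcal{C}_{n-1}}^{\,n-1}=\sum_{k\geq 1}\mu_k(T)^{\,n-1}\leq C_0^{\,n-1}\sum_{k\geq 1}k^{-2(n-1)/n}<\infty.
\]
Combining this embedding with the CLR-Birman-Schwinger bound and unwinding the two reductions yields \eqref{eq_negativez}. I expect the most delicate point to be verifying the CLR inequality in the geometric setting of $(M,g)$ with a uniform constant; this is the step where the Grigor'yan-Varopoulos estimate is essential, and it is the only feature of the manifold beyond completeness that the argument uses.
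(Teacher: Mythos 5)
Your proof is correct, but it takes a genuinely different route from the paper's. The paper runs a Stein complex-interpolation argument directly on the analytic family $S_s = W_1^{-s}(\Delta_g - z)^s W_2^{-s}$ for $-\tfrac{n-1}{2} \le \Re s \le 0$: it uses the Grigor'yan--Varopoulos Gaussian heat-kernel upper bound to derive the pointwise kernel estimate $|(\Gamma(-s))(\Delta_g-z)^s(m,m')| \le C\,d(m,m')^{-1}$ on the line $\Re s = -\tfrac{n-1}{2}$, and then combines this with H\"older and the generalized Hardy--Littlewood--Sobolev inequality of Garcia-Cuerva and Gatto to obtain the Hilbert--Schmidt bound on that line, from which interpolation at $s=-1$ gives $\mathcal{C}_{n-1}$. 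You instead factor through the Cwikel--Lieb--Rozenblum bound and the Birman--Schwinger principle to get a weak Schatten estimate $\|V^{1/2}(\Delta_g+\mu^2)^{-1}V^{1/2}\|_{\mathcal{C}_{n/2,\infty}} \le C\|V\|_{L^{n/2}}$ uniformly in $\mu$, and then exploit the summability exponent $2(n-1)/n > 1$ to pass into $\mathcal{C}_{n-1}$. Your reductions (Schatten--H\"older, the $T^*T$ identities $\|BC^*\|_{\mathcal{C}_q}$ vs.\ $\|BB^*\|_{\mathcal{C}_q}$, the polar-decomposition conjugation) are all standard and correct, and the monotonicity $\Delta_g+\mu^2-\lambda V \ge \Delta_g-\lambda V$ does make the CLR count uniform in $\mu$. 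Each approach buys something: yours uses only the ultracontractive bound $\|e^{-t\Delta_g}\|_{L^1\to L^\infty}\le Ct^{-n/2}$ (not the pointwise Gaussian bound) and avoids interpolation, making the role of CLR explicit; the paper's approach fits the Frank--Sabin analytic-family template, which it reuses on the positive real axis in Proposition~\ref{prop_cases_2_iii_3_iii} where a CLR/heat-kernel route is no longer available. One thing to tighten: you should cite a specific source for the CLR inequality under the heat-kernel hypothesis on a Riemannian manifold (e.g.\ Li--Yau, or Rozenblum--Solomyak), since this is the only nontrivial external input and ``the classical Lieb--Rozenblum--Cwikel argument'' is left as a gesture.
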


\begin{proof} 
Here we use a slight variation of Proposition~\ref{prop:Frank-Sabin}. Let $W_1,W_2$ be non-negative simple functions and consider the analytic  family of operators
\[
S_s=W_1^{-s} (\Delta_g - z)^s W_2^{-s}, \quad -\frac{(n-1)}{2}\le \Re s\le 0. 
\]
Clearly, when  $\Re s = 0$, we have
\begin{equation}
\label{eq_negativez_1}
\|S_s\|_{L^2(M) \to L^2(M)}\le C.
\end{equation}
Next, we will show that, when $\Re s = -\frac{(n-1)}{2}$, then $S_s$ is Hilbert-Schmidt and we have 
\begin{equation}
\label{eq_negativez_2}
\| S_s\|_{\mathcal{C}_2}\le C e^{C|\mathrm{Im} \, s|} \| W_1\|^{\frac{n-1}{2}}_{L^n(M)} \| W_2\|^{\frac{n-1}{2}}_{L^n(M)}.
\end{equation}
 This allows us to run the interpolation argument in the proof of Proposition~\ref{prop:Frank-Sabin}.

To prove \eqref{eq_negativez_2},  on the line $\Re s = -\frac{(n-1)}{2}$,  we express $(\Delta_g - z)^{s}$ in terms of the heat kernel: 
\begin{equation} 
\label{heatkernel_k_1}
\Gamma(-s) (\Delta_g-z)^{s}(m,m')= \int_0^\infty t^{-s-1}e^{tz} e^{-t\Delta_g}(m,m')dt.
\end{equation}
We now use heat kernel estimates. 
Due to Varopoulos \cite{Va}, we have the estimate 
$||e^{-t\Delta_g}||_{L^1\to L^\infty}\leq Ct^{-\ndemi}$ and by a result of Grigor'yan \cite{Gr}, this implies a pointwise upper Gaussian estimate on the heat kernel 
\begin{equation} 
\label{heatkernel}
|e^{-t\Delta_g}(m,m')|\leq Ct^{-\ndemi} e^{-\frac{cd(m,m')^2}{t}}, \quad t>0,  
\end{equation}
for some $c>0$. The integral in \eqref{heatkernel_k_1} is convergent for all $m\ne m'$ due to \eqref{heatkernel}. 
We thus get  for all $m\not=m'$ and $z\in(-\infty,0)$, and uniformly for all $s$ such that $\Re s = -\frac{(n-1)}{2}$, 
\begin{equation} 
\label{heatkernel_k_2}
\begin{split}
|\Gamma(-s) (\Delta_g-z)^{s}(m,m')|& \leq C\int_0^\infty t^{-\frac{3}{2}}e^{-\frac{cd(m,m')^2}{t}+z t} \, dt \\
& \leq C d(m,m')^{-1}\int_0^\infty t^{-\frac{3}{2}}e^{-\frac{c}{t}+zd(m,m')^2t}dt  \\
& \leq C d(m,m')^{-1}.
\end{split}
\end{equation}
Using H\"older's  inequality,  the generalized Hardy-Littlewood-Sobolev inequality of \cite{GaGa} and \eqref{heatkernel_k_2}, we obtain for $\Re s = -\frac{(n-1)}{2}$,
\begin{align*}
\|W^{-s}_1&(\Delta_g-z)^{s}W_2^{-s}\|_{\mc{C}_2(M)}^2 \\
&\leq C |\Gamma(-s)|^{-1}  \int_{M \times M} W_1(m)^{n-1} d(m,m')^{-2} W_2(m')^{n-1} dV_g(m) dV_g(m') \\
&\leq C |\Gamma(-s)|^{-1} \| W_1^{n-1} \|_{L^{\frac{n}{n-1}}(M)} \| W_2^{n-1} \|_{L^{\frac{n}{n-1}}(M)} \leq  C e^{C|\mathrm{Im} \,  s|} \| W_1\|_{L^n(M)}^{n-1} \| W_2\|_{L^n(M)}^{n-1}
\end{align*}
where the factor $e^{C|\mathrm{Im} \, s|}$ is contributed by the Gamma function. This shows \eqref{eq_negativez_2}.

We now interpolate using the family $S_s$ between  \eqref{eq_negativez_1} and  \eqref{eq_negativez_2}, as in the proof of Proposition~\ref{prop:Frank-Sabin}, and we obtain at $s = -1$
\begin{equation}
\| W_1 (\Delta_g - z)^{-1} W_2 \|_{\mathcal{C}_{n-1}} \leq C \| W_1\|_{L^n(M)} \| W_2\|_{L^n(M)}.
\end{equation}
which completes the proof for $W_1$ and $W_2$ non-negative and simple. The extension to general $W_1, W_2 \in L^n(M)$ is standard. 
\end{proof}

We now prove an analogue of Proposition~\ref{prop:resolvent-near-spectrum-away-from-singularity1}.
\begin{prop} 
\label{prop:resolvent-near-spectrum-away-from-singularity1_n/2}
Let $p = \frac{n}{2}$ and suppose $W_1, W_2\in L^n(M)$, and let $\phi$ be as in Proposition~\ref{prop:resolvent-near-spectrum-away-from-singularity1}. Then for $z > 0$, the operator
$W_1 \big(1-\phi\big)\big( \frac{\Delta_g}{z}\big) (\Delta_g - z)^{-1} W_2$
is in the Schatten class $\mathcal{C}_{n-1}(L^2(M))$ and  
\[
\bigg\|W_1 \bigg(1-\phi\bigg)\bigg( \frac{\Delta_g}{z}\bigg) (\Delta_g - z)^{-1} W_2 \bigg\|_{\mathcal{C}_{n-1}}\le C \| W_1\|_{L^n(M)} \| W_2\|_{L^n(M)},
\]
 uniformly in $z$. 
\end{prop}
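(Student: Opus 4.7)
The plan is to exploit the elementary resolvent identity
\[
(\Delta_g - z)^{-1} = (\Delta_g + z)^{-1} + 2z\,(\Delta_g - z)^{-1}(\Delta_g + z)^{-1},
\]
trading the singular resolvent at $z>0$ for the resolvent at $-z<0$, at the cost of an extra factor $(\Delta_g+z)^{-1}$ that provides additional decay at high frequencies. Such a device is needed because a direct spectral-representation approach as in Proposition~\ref{prop:resolvent-near-spectrum-away-from-singularity1} fails at the endpoint $p=\tfrac{n}{2}$: by Theorem~\ref{prop:spectral measure Schatten estimate}, the spectral-measure Schatten bound is proportional to $\lambda$ there, so the resulting scalar integral $\int_0^\infty \tfrac{(1-\phi)(\lambda^2/z)}{|\lambda^2-z|}\lambda\,d\lambda$ diverges logarithmically at infinity. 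The additional factor $(\Delta_g+z)^{-1}$ supplies exactly the missing $\lambda^{-2}$ decay. Since $(1-\phi)(\Delta_g/z)$ vanishes in a neighbourhood of $\Delta_g/z=1$, every operator in the identity is defined by bounded functional calculus on the support of the cutoff, no $\pm i0$ prescription is needed, and the identity holds unambiguously.

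Multiplying by $(1-\phi)(\Delta_g/z)$ and sandwiching by $W_1, W_2$, it suffices to bound the two operators
\[
I_1 = W_1\,(1-\phi)(\Delta_g/z)\,(\Delta_g+z)^{-1}\, W_2, \quad I_2 = 2z\,W_1\,(1-\phi)(\Delta_g/z)\,(\Delta_g-z)^{-1}(\Delta_g+z)^{-1}\, W_2
\]
separately in $\mathcal{C}_{n-1}(L^2(M))$ by $C\|W_1\|_{L^n(M)}\|W_2\|_{L^n(M)}$, uniformly in $z>0$. For $I_1$, I would write $1-\phi = 1-\phi$: the unlocalized summand $W_1(\Delta_g+z)^{-1}W_2$ is controlled by Proposition~\ref{negativez} applied at the negative spectral parameter $-z<0$, with constant uniform in $z$. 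The localized summand $W_1\phi(\Delta_g/z)(\Delta_g+z)^{-1}W_2$ is treated via the spectral representation, Minkowski's integral inequality, and Theorem~\ref{prop:spectral measure Schatten estimate} at $p=\tfrac{n}{2}$; the change of variable $u=\lambda^2/z$ reduces the resulting scalar integral to $\tfrac{1}{2}\int_0^\infty \tfrac{\phi(u)}{u+1}\,du$, which is a finite $z$-independent constant since $\phi$ is compactly supported.

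For $I_2$, the same spectral-representation strategy produces the scalar factor $2z\int_0^\infty \tfrac{(1-\phi)(\lambda^2/z)\,\lambda}{|\lambda^2-z|(\lambda^2+z)}\,d\lambda$; after the substitution $u=\lambda^2/z$, the prefactor $2z$ cancels exactly against the $z$-scaling of the integral, leaving
\[
\int_0^\infty \frac{(1-\phi)(u)}{|u-1|(u+1)}\,du,
\]
which is finite because the integrand vanishes near $u=1$ (where $\phi\equiv1$) and decays like $u^{-2}$ at infinity. The only conceptually non-trivial step is recognising the right resolvent-identity decomposition; once in place, the remaining work reduces to the already-established bound of Proposition~\ref{negativez} and to elementary convergent scalar integrals. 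The exact cancellation of the $2z$ prefactor in $I_2$ against the $z$-scaling of the spectral integral is precisely what delivers a bound uniform in $z$.
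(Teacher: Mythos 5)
Your proposal is correct and follows essentially the same route as the paper's proof: the same resolvent identity $(\Delta_g-z)^{-1}=(\Delta_g+z)^{-1}+2z(\Delta_g+z)^{-1}(\Delta_g-z)^{-1}$, the same splitting of $1-\phi$ in the first term to invoke Proposition~\ref{negativez} at $-z$ together with the spectral-measure bound for the $\phi$ piece, and the same Minkowski-plus-spectral-measure treatment of the second term. Your only addition is carrying out the $u=\lambda^2/z$ substitution explicitly to exhibit the convergent, $z$-independent scalar integrals $\frac12\int_0^\infty\frac{\phi(u)}{u+1}\,du$ and $\int_0^\infty\frac{(1-\phi)(u)}{|u-1|(u+1)}\,du$, which the paper states but does not display.
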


\begin{proof}
We first note that for $z > 0$,  the operator
$$
W_1 \phi\bigg( \frac{\Delta_g}{z}\bigg)  (\Delta_g + z)^{-1} W_2 
$$
is in the Schatten class $\mathcal{C}_{n-1}(L^2(M))$, and 
\[
\bigg\|W_1 \phi\bigg( \frac{\Delta_g}{z}\bigg)  (\Delta_g + z)^{-1} W_2 \bigg\|_{\mathcal{C}_{n-1}}\le C\| W_1\|_{L^n(M)} \| W_2\|_{L^n(M)},
\]
uniformly in $z$. This follows from the spectral measure estimate \eqref{spec-meas-bound}, since 
$$
\int_0^\infty \lambda \phi \bigg( \frac{\lambda^2}{z}\bigg) (\lambda^2 + z)^{-1}  d\lambda
$$
is bounded uniformly in $z$. Combining this with the result of Proposition~\ref{negativez}, we see that $W_1 \big(1-\phi\big)\big(\frac{\Delta_g}{z}\big)(\Delta_g + z)^{-1} W_2 $ is in $\mathcal{C}_{n-1}(L^2(M))$ and we have 
\begin{equation}
\label{eq_compl_est_0}
\bigg\| W_1 \bigg(1-\phi\bigg)\bigg(\frac{\Delta_g}{z}\bigg)(\Delta_g + z)^{-1} W_2\bigg\|_{\mathcal{C}_{n-1}}\le C\| W_1\|_{L^n(M)} \| W_2\|_{L^n(M)},
\end{equation}
uniformly in $z$.

Now we write
\begin{equation}
\label{eq_compl_est_1}
\begin{aligned}
W_1 \bigg(1-\phi\bigg)\bigg(\frac{\Delta_g}{z}\bigg)(\Delta_g - z)^{-1} W_2  = W_1 \bigg(1-\phi\bigg)\bigg(\frac{\Delta_g}{z}\bigg)(\Delta_g + z)^{-1} W_2  \\ + 2zW_1 \bigg(1-\phi\bigg)\bigg(\frac{\Delta_g}{z}\bigg)(\Delta_g + z)^{-1}(\Delta_g-z)^{-1} W_2 .
\end{aligned}
\end{equation}
The first term in the right hand side of \eqref{eq_compl_est_1} has already been shown to lie in $\mathcal{C}_{n-1}$ with the bound \eqref{eq_compl_est_0}.  We write the second term on  the right hand side of \eqref{eq_compl_est_1} in terms of the spectral measure and apply Minkowski's integral inequality together with the spectral measure estimate \eqref{spec-meas-bound}, and find that the norm in $\mathcal{C}_{n-1}$ is bounded by 
$$
C\bigg(z  \int_0^\infty \bigg(1-\phi\bigg)\bigg( \frac{\lambda^2}{z}\bigg) (\lambda^2 + z)^{-1} (\lambda^2 - z)^{-1}  \lambda d\lambda\bigg) \| W_1\|_{L^n(M)} \| W_2\|_{L^n(M)}
$$
and a change of variable shows that this integral is convergent and independent of $z$, completing the proof.
\end{proof}

\section{Resolvent estimates on the spectrum. Completion of the proof of Theorem~\ref{thm_resolvent_Schatten_laplacian}}
\label{sec_on_spectrum}

The key difficulty in proving Theorem~\ref{thm_resolvent_Schatten_laplacian} is to obtain estimates on the limiting resolvent at the spectrum, $(\Delta_g - (z + i0))^{-1}$, for $z > 0$. Given Proposition~\ref{prop:resolvent-near-spectrum-away-from-singularity1} and Proposition \ref{prop:resolvent-near-spectrum-away-from-singularity1_n/2}, we only need to do this localized near the singularity at $z$ of the spectral multiplier $(\lambda^2 - z)^{-1}$. In doing so, following \cite{GuHa}, \cite{GHS},  and \cite{HaZh}, we shall use a microlocal partition of unity.

\subsection{Operator partition of unity}

\label{sec:partition of unity}
We begin by recalling some results of \cite{GuHa} and \cite{HaZh} on high and low frequency microlocal estimates on the spectral measure and resolvents of $\Delta_g$. 

\begin{prop}\label{prop:partition of unity}
\emph{\textbf{High frequency microlocal estimates.}}  For all high energies $\eta \geq 1/2$,  there exists a family of bounded operators $Q_i(\eta):L^2(M)\to L^2(M)$, $i=1,\dots,N_h$, with $N_h$ independent of $\eta$ and with the norm satisfying
\begin{equation}
\|Q_i(\eta)\|_{L^2(M)\to L^2(M)}\leq C \text{ for some $C$ independent of $\eta$},
\label{norm-uniform}
\end{equation}
 so that the following properties hold:\\
(1) The operators $Q_i(\eta)$ form an operator partition of unity: 
\begin{equation}
\sum_{i=1}^{N_h}Q_i(\eta)={\rm Id}.
\label{partition-of-identity}\end{equation}
(2)  Let  $\eta\geq 1/2$ and $(i,j) \in \{1,\dots, N_h\}^2$. There exists $\delta>0$ small such that for all $z>0$ such that $\sqrt{z} \in  [(1-\delta)\eta, (1+\delta)\eta]$,  one of the following  three alternatives holds:\\
(2.i) One has, for the \emph{outgoing} resolvent, 
 \begin{equation}\label{1stcase}
\big(Q_i(\eta)^*(\Delta_g-(z + i0))^{-1}Q_j(\eta)\big)(m,m')\in 
x(m)^\infty {x(m')}^\infty z^{-\infty}C^\infty(\bbar{M}\x\bbar{M}),
\end{equation} 
for all $m,m'\in M$, where the $C^\infty(\bbar{M}\x\bbar{M})$ part depends also on $z$ and is uniformly bounded in $z$ in the smooth topology. \\
(2.ii) One has for the \emph{incoming} resolvent,
\begin{equation}
\big(Q_i(\eta)^*(\Delta_g-(z -i0))^{-1}Q_j(\eta)\big)(m,m')\in 
x(m)^\infty {x(m')}^\infty z^{-\infty}C^\infty(\bbar{M}\x\bbar{M}),
\label{2ndcase}\end{equation}
for all $m,m'\in M$.

(2.iii) The spectral measure 
satisfies, for $\lambda=\sqrt{z} \in  [(1-\delta)\eta, (1+\delta)\eta]$, the following bounds: 
for all $k=0,1,2,\dots$, there is $C_k>0$ such that for all $m,m'\in M$ 
\begin{equation}
\label{eq_10_1_partition} 
\Big|\pl_{\la}^k\big(Q_i(\eta)^*dE_{\sqrt{\Delta_g}}(\la)Q_j(\eta)\big)(m,m')\Big|\leq C_k\la^{n-1-k}(1+\la d(m,m'))^{-\frac{(n-1)}{2}+k},
\end{equation}
\begin{equation}
\label{eq_10_2_partition}
\big(Q_i(\eta)^*dE_{\sqrt{\Delta_g}}(\la)Q_j(\eta)\big)(m,m')=\la^{n-1}\bigg(\sum_{\pm}e^{\pm i\la d(m,m')}a_\pm(\la,m,m')+b(\la,m,m')\bigg),
\end{equation}
with $a_\pm,b$ satisfying the estimates for all $k=0,1,2,\dots$, 
\begin{equation}
\label{eq_10_3_partition}
 |\pl_\la^k a_\pm(\la,m,m')|\leq C_k\la^{-k}(1+\la d(m,m'))^{-\frac{(n-1)}{2}}, 
\end{equation}
 \begin{equation}
\label{eq_10_4_partition}
|\pl_\la^k b(\la,m,m')|\leq C_k\la^{-k}(1+\la d(m,m'))^{-K}, \quad \forall K>1.
\end{equation}
Moreover the alternative (2.iii) always holds if $i=j$.

\emph{\textbf{Low frequency microlocal estimates.}}  
Similarly, for all low energies $\eta \leq 2$, there exists a family of bounded operators $Q_i(\eta):L^2(M)\to L^2(M)$, $i=0, *, 1,\dots,N_l$, with $N_l$ independent of $\eta$ satisfying \eqref{norm-uniform} and \eqref{partition-of-identity} (with the sum in this case ranging over $i=0, *, 1,\dots,N_l$), 
satisfying the following:

(3)  Let $0<\eta\leq 2$ and $i,j$ range independently in $\{0,*, 1, \dots, N_l\}$. There exists $\delta>0$ small such that for all $z>0$ satisfying $\lambda := \sqrt{z} \in [(1-\delta)\eta, (1+\delta)\eta]$, one of the following three alternatives holds:\\
(3.i) One has the pointwise kernel bound for the \emph{outgoing} resolvent (for all $N\in\NN$) 
\begin{equation}
\label{1stcaselow}
\begin{split}
|\big(Q_i(\eta)^*(\Delta_g-(z+i0))^{-1} & Q_j(\eta)\big)(m,m')|\leq \\
& C_N \Big( \frac{x}{x+\lambda}\Big)^N \Big( \frac{x'}{x'+\lambda}\Big)^N \frac{(xx')^{\frac{n-1}{2}}(\chi(\tfrac{x}{\la})+\chi(\tfrac{x'}{\la}))}{x+x'+\la}, 
\end{split}\end{equation} 
where $x=x(m), x'=x(m')$, and $\chi\in C_0^\infty((-\varepsilon,\varepsilon),[0,\infty))$  is such that $\chi=1$ in $[-\varepsilon/2,\varepsilon/2]$.  Here $\varepsilon>0$ is small enough. \\
(3.ii) One has the pointwise kernel bound for the \emph{incoming} resolvent (for all $N\in\NN$)
\begin{equation}
\label{2ndcaselow}
\begin{split}
|\big(Q_i(\eta)^*(\Delta_g-(z-i0))^{-1} & Q_j(\eta)\big)(m,m')|
\leq \\
& C_N \Big( \frac{x}{x+\lambda}\Big)^N \Big( \frac{x'}{x'+\lambda}\Big)^N  \frac{(xx')^{\frac{n-1}{2}}(\chi(\tfrac{x}{\la})+\chi(\tfrac{x'}{\la}))}{x+x'+\la}. 
\end{split}
\end{equation}\\
(3.iii)  For all $k=0,1,2,\dots$, there is $C_k>0$ such that \eqref{eq_10_1_partition}, \eqref{eq_10_2_partition}, \eqref{eq_10_3_partition} and \eqref{eq_10_4_partition} hold. 

Moreover if $i=j$, the alternative (3.iii) holds.\\
\end{prop}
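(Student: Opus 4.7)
Since the statement packages microlocal structure results already established in the references, the plan is to construct the partitions of unity as pseudodifferential operators in two different calculi -- semiclassical scattering at high energy and low-energy $b$-calculus at low energy -- and to derive the three alternatives in each regime from propagation of singularities together with explicit FIO parametrices for the spectral measure.

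For the high-energy regime, I would take $Q_i(\eta) = q_i(m, D; h)$ to be semiclassical scattering pseudodifferential operators with $h = 1/\eta$, whose full symbols cover the rescaled characteristic set $\{|\xi|_g = \eta\}$ inside the fiber-compactified scattering cotangent bundle ${}^{\SC}T^* M$ by fine-enough neighbourhoods. The Calder\'on--Vaillancourt theorem then yields the uniform operator bound \eqref{norm-uniform}, and summation gives \eqref{partition-of-identity}. Choosing the neighbourhoods fine enough, the non-trapping hypothesis ensures that for each pair $(i, j)$ exactly one of three geometric configurations holds: the microsupports are in ``outgoing--incoming'' position for the $+i0$ resolvent, the analogous mismatch for the $-i0$ resolvent, or they lie on a common bicharacteristic segment of bounded length. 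In the first case, Melrose's propagation of singularities for the outgoing limiting resolvent \cite{Melrose}, \cite{HaVa} shows that $Q_i(\eta)^*(\Delta_g - (z + i0))^{-1} Q_j(\eta)$ has a Schwartz kernel rapidly decaying in $z$, giving \eqref{1stcase}; the second case is symmetric, giving \eqref{2ndcase}. The third case -- always including the diagonal $i = j$ -- is the content of the FIO representation of the microlocalized spectral measure from \cite[Section 3]{GHS}: on such a piece, $Q_i(\eta)^* dE_{\sqrt{\Delta_g}}(\lambda) Q_j(\eta)$ is a sum of two oscillatory integrals with phases $\pm \lambda d(m, m')$ plus a smoothing remainder, stationary phase along the geodesic flow producing amplitudes $a_\pm$ and a remainder $b$ satisfying \eqref{eq_10_3_partition}--\eqref{eq_10_4_partition}, from which \eqref{eq_10_1_partition} follows by dyadic decomposition in $\lambda d(m, m')$.

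For the low-energy regime, the ambient calculus is the low-energy $b$-calculus on the resolved double space $M^2_{k,b}$ from \cite{GuHa}, and the $Q_i(\eta)$ for $i \in \{0, *, 1, \ldots, N_l\}$ are drawn from a microlocal partition in this calculus, with the indices $0$ and $*$ encoding the two radial faces of the resolved phase space at the zero energy and scattering ends. The same trichotomy applies: the outgoing/incoming mismatch cases give the pointwise kernel bounds \eqref{1stcaselow} and \eqref{2ndcaselow} derived in \cite{GuHa}, while the common-bicharacteristic case (always including $i = j$) gives the spectral measure representation \eqref{eq_10_2_partition} as in \cite{HaZh}. The main obstacle throughout is uniformity in $\eta$: every cutoff, parametrix, and remainder must be uniformly controlled in the relevant semiclassical or low-energy sense, and non-trapping is what keeps the bicharacteristic escape times inside the energy shell $\{|\xi|_g \in [(1 - \delta)\eta, (1+\delta)\eta]\}$ bounded, making the outgoing/incoming classification well-defined and ensuring uniform FIO amplitude estimates. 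Once this bookkeeping is in place, the proposition is a careful assembly of the microlocal results of \cite{GHS}, \cite{GuHa}, \cite{HaZh}.
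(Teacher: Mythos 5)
Your proposal follows essentially the same approach as the paper: both assemble the proposition from the microlocal results in \cite{GHS}, \cite{GuHa}, and \cite{HaZh}, sorting pairs $(i,j)$ by the geometric relation of their microsupports (not-incoming, not-outgoing, or close enough that the spectral-measure FIO parametrix applies), which is precisely the organizing principle you describe. The paper's actual proof is a precise pointer to specific statements --- Lemma 5.3 and Propositions 6.4, 6.7, 6.9 of \cite{GuHa}, Theorem 1.12 of \cite{GHS}, and Proposition 1.5 of \cite{HaZh} (noting the extension from $i=j$ to nearby microsupports) --- and the low-energy case analysis is finer than your sketch suggests (subcases distinguishing whether an index is $0$, $*$, or $\geq 1$, with some pairs fitting more than one alternative), but the substance matches.
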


\begin{rem} The two partitions of the identity do not quite match up in the intermediate energy regime, $1/2 \leq \eta \leq 2$. Because of this, it would be more notationally accurate to label the partitions $Q_i^{high}$ and $Q_j^{low}$; to avoid cumbersome notation, we do not do this. We emphasize that in this intermediate regime, either partition can be used. 
\end{rem}

\begin{rem}
In the low energy case, $\eta \leq 2$, let us first point out the meaning of the RHS of \eqref{1stcaselow} and \eqref{2ndcaselow}. In \cite{GuHaSi_III} it was shown that the Schwartz kernel of the resolvent $(\Delta_g - (\lambda^2 \pm i0))^{-1}$ for $\lambda \in [0, \lambda_0]$ has some polyhomogeneous structure on the ``low energy space'', which is a blowup of $\bbar{M} \times \bbar{M} \times [0, \lambda_0]$. Ignoring the artificial boundary at $\lambda = \lambda_0$, this bown-up space has 7 boundary hypersurfaces corresponding to 7 different types of asymptotics displayed by the resolvent kernel. These are the left boundary $\lb$, the right boundary $\rb$, which arise from $\pl \bbar{M} \times \bbar{M} \times [0, \lambda_0]$ and $\bbar{M} \times \pl \bbar{M} \times [0, \lambda_0]$; the b-face $\bfc$, which arises from blowing up $\pl \bbar{M} \times \pl \bbar{M} \times [0, \lambda_0]$; the `zero face' $\zf$, arising from $\bbar{M} \times \bbar{M} \times \{ 0 \}$; and three faces at $\lambda=0$ produced by blowing up. These are $\bfo$, arising from blowing up $\pl \bbar{M} \times \pl \bbar{M} \times \{ 0 \}$; the face $\lbo$, arising from blowing up $\pl \bbar{M} \times  \bbar{M} \times \{ 0 \}$; and lastly $\rbo$, arising from blowing up $ \bbar{M} \times \pl \bbar{M} \times \{ 0 \}$. See
 figure 1 of \cite{GuHaSi_III}. 

The resolvent (microlocally away from the conormal bundle of the diagonal) was shown in \cite{GuHaSi_III} to be polyhomogeneous and vanish to order $n-2$ at the boundary hypersurfaces $\lbo$, $\rbo$, $\bfo$, and to vanish to order $(n-1)/2$ at $\lb$ and $\rb$. 
Cases (3.i) and (3.ii) apply when the microlocalizing operators $Q_i$ and $Q_j$ remove the wavefront set at $\lb, \rb$ and $\bfc$, meaning there is infinite order vanishing there. Moreover, the cutoff factor $\chi(x/\lambda) +\chi(x'/\lambda)$  vanish in a neighbourhood of $\zf$. Now notice that  
$x$ vanishes to first order at $\lb$, $\lbo$ and $\bfo$, while $x'$ vanishes to first order at $\rb$, $\rbo$ and $\bfo$ and $x+x'+\lambda$ vanishes to first order at $\bfo$. So the product on the RHS of \eqref{1stcaselow} and \eqref{2ndcaselow} precisely encodes the order of vanishing at these remaining boundary hypersurfaces. 
\end{rem}

\begin{proof} This is a combination of several results from  \cite{GuHa} and \cite{GHS}. In the high energy case, $\eta \geq 1/2$, Lemma 5.3 of \cite{GuHa} tells us that the pairs $(i,j)$ split into four cases. In the first two cases, $Q_i(\eta)^*$ is either not-incoming or not-outgoing related to $Q_j(\eta)$, and then Proposition 6.7 of \cite{GuHa} applies; note that the estimates in (2.i) and (2.ii) above appear in the proof, rather than the statement, of Proposition 6.7. In the third and fourth cases, Theorem 1.12 of \cite{GHS} applies and shows that estimates \eqref{eq_10_1_partition} hold, see also Proposition 6.4 of \cite{GuHa}. 
Also in the third and fourth cases,  Proposition 1.5 of \cite{HaZh} holds and gives the estimates \eqref{eq_10_2_partition}, \eqref{eq_10_3_partition} and \eqref{eq_10_4_partition}. Note that 
 \cite[Proposition 1.5]{HaZh} is written in the case when $i=j$ but the proof of that proposition shows that it remains valid more generally when $i\ne j$ but the microsupports are close enough.

In the low energy case, as shown in Section 6 of \cite{GuHa}, case (3.iii) applies to the pairs $(0,0)$, $(*, *)$, and $(i, j)$ where $i, j\geq 1$ and $|i-j| \leq 1$. Moreover, case (3.iii) also applies to any pair where either $i = *$ or $j=*$. That is because in these cases, the operator $Q_*(\eta)$ annihilates all the wavefront set of the spectral measure at bf, with the consequence that the spectral measure estimates 
\begin{equation}
\label{eq_10_1} 
\Big|\pl_{\la}^k\big(Q_i(\eta)^*dE_{\sqrt{\Delta_g}}(\la)Q_j(\eta)\big)(m,m')\Big|\leq C_k\la^{n-1-k}(1+\la d(m,m'))^{-\frac{(n-1)}{2}+k},
\end{equation}
 hold if either $i = *$ or $j=*$, and this leads to estimates \eqref{eq_10_1_partition} as in the high energy case.  For (3.iii) with $i,j\geq 1$, the estimates \eqref{eq_10_2_partition}, \eqref{eq_10_3_partition} and \eqref{eq_10_4_partition} are proven in 
 \cite[Proposition 1.5]{HaZh} in the case when $i=j$ but the proof shows that it remains valid more generally when $i\ne j$ but the microsupports are close enough.  
 The case $i,j\in\{0,*\}$ in (3.iii) is also shown in \cite[Proposition 1.5]{HaZh}.

 The cases $i=0$ and $j \geq 1$, or $i\geq 1$ and $j=0$, fit any one of the cases (3.i), (3.ii), (3.iii) above. This is because here the wavefront set at bf is wiped out by $Q_0(\eta)$, while the wavefront set at fibre-infinity is wiped out by $Q_j(\eta)$ for $j \geq 1$. 
 
The final case remaining, where $i, j \geq 1$ and $|i-j| \geq 2$, fit into cases (3.i) or (3.ii) according to whether $Q_i(\eta)^*$ is not incoming-related or not outgoing-related to $Q_j(\eta)$, as shown in Proposition 6.9 of \cite{GuHa}. 
\end{proof}

Cases (3.i) and (3.ii) will be treated using the following lemma. 

\begin{lem}
\label{lem:K}
Let $(M,g)$ be an asymptotically conic manifold of dimension $n\ge 3$. Then if an integral operator $K$ has kernel $K(m,m')$ bounded pointwise by 
$$
C\frac{(xx')^{\frac{n-1}{2}}(\chi(\tfrac{x}{\la})+\chi(\tfrac{x'}{\la}))}{x+x'+\la},\quad 0<\lambda\le 3, 
$$
then for $W_1, W_2 \in L^{2p}(M)$, $p \in [\frac{n}{2}, \frac{n+1}{2}]$, the operator $W_1 K W_2$ is Hilbert Schmidt and we have 
\begin{equation}
\label{eq_lem_K}
\| W_1 K W_2 \|_{\mathcal{C}_2}\le C \lambda^{-2+\frac{n}{p}} \| W_1 \|_{L^{2p}(M)}\| W_2 \|_{L^{2p}(M)}.
\end{equation}
\end{lem}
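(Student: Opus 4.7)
The plan is to compute the Hilbert--Schmidt norm directly. Expanding its definition,
\begin{equation*}
\|W_1 K W_2\|_{\mathcal{C}_2}^2 = \int_{M \times M} |W_1(m)|^2\,|K(m,m')|^2\,|W_2(m')|^2\,dm\,dm'.
\end{equation*}
Using $(a+b)^2 \le 2(a^2+b^2)$ together with the symmetry $m \leftrightarrow m'$, I may reduce to estimating only the half of the pointwise bound on $K$ carrying the $\chi(x/\lambda)$ factor, namely
\begin{equation*}
|K(m,m')| \le C\,(xx')^{(n-1)/2}(x+x'+\lambda)^{-1}\chi(x/\lambda).
\end{equation*}
By Fubini, $|W_1(m)|^2|W_2(m')|^2$ lies in $L^p(M\times M)$ with norm $\|W_1\|_{L^{2p}}^2\|W_2\|_{L^{2p}}^2$, so H\"older's inequality with dual exponent $p/(p-1)$ reduces the lemma to proving
\begin{equation*}
\|K\|_{L^{q}(M\times M)} \le C\,\lambda^{n/p-2},\qquad q := \tfrac{2p}{p-1}.
\end{equation*}

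The cutoff $\chi(x/\lambda)$ forces $x \lesssim \lambda$; when $\lambda$ is sufficiently small this confines the integration to the boundary collar, while on the complementary compact set (relevant only when $\lambda$ is of order $1$) the kernel is bounded pointwise and contributes an $O(1) = O(\lambda^{n/p-2})$ term. In the collar, one has the model volume form $dm \asymp x^{-n-1}\,dx\,dy$ coming from the metric \eqref{asym_con_metric_intr}, and after the rescaling $x = \lambda u$, $x' = \lambda v$ the estimate to verify becomes
\begin{equation*}
\int |K|^q\,dm\,dm' \lesssim \lambda^{q(n-2)-2n}\int_0^c\!\!\int_0^{C/\lambda} u^\alpha v^\alpha (u+v+1)^{-q}\,du\,dv,\quad \alpha := \tfrac{q(n-1)}{2}-n-1,
\end{equation*}
where $c$ comes from $\mathrm{supp}\,\chi$ and $C$ from the coordinate range of $x'$.

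The main (and essentially routine) obstacle is to verify that the inner double integral is bounded uniformly as $\lambda \to 0$. Integrability at $u,v=0$ requires $\alpha > -1$, equivalently $q > 2n/(n-1)$, which is just $p<n$, and holds since $p \le (n+1)/2$. Integrability as $v \to \infty$ requires $\alpha - q < -1$, equivalently $q < 2n/(n-3)$ when $n>3$ (no condition for $n=3$); this follows from $q \le 2n/(n-2)$, in turn coming from $p \ge n/2$. Both endpoints of the range $p\in[\tfrac{n}{2},\tfrac{n+1}{2}]$ are thus used exactly once. Taking the $q$-th root of the displayed estimate and using the identity $2n/q = n - n/p$ gives $\|K\|_{L^q(M\times M)} \lesssim \lambda^{(n-2) - 2n/q} = \lambda^{n/p-2}$, and feeding this back through the H\"older step completes the proof.
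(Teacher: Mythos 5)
Your proof is correct and takes essentially the same route as the paper: H\"older's inequality with conjugate exponents $p$, $p'=p/(p-1)$ reduces the Hilbert--Schmidt norm to the $L^{2p'}(M\times M)$-norm of the kernel, and that weighted integral near the boundary is then evaluated explicitly. The only cosmetic difference is that the paper evaluates the integral in polar coordinates $(x,x')=(R\sin\theta,R\cos\theta)$, splitting at $R=2\lambda$, whereas you rescale $x=\lambda u$, $x'=\lambda v$ to pull out the power of $\lambda$ directly; both devices exhibit the same $\lambda$-homogeneity and both exponents come out as $\lambda^{n/p-2}$.

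One small inaccuracy in your narrative: the endpoints of $[\tfrac{n}{2},\tfrac{n+1}{2}]$ are not actually tight constraints in your computation. For $p$ in that range one has $\alpha\in[0,\tfrac{2}{n-2}]$ and $\alpha-q\le -2(n+1)/(n-1)<-1$, so both convergence conditions ($\alpha>-1$ and $\alpha-q<-1$) hold with room to spare; indeed your unitless double integral converges for all $p\in(\tfrac n3,n)$, a wider range. The restriction to $p\in[\tfrac n2,\tfrac{n+1}2]$ in the lemma is dictated by the Schatten-index bookkeeping elsewhere (one needs $q=\tfrac{p(n-1)}{n-p}\ge2$ so that the $\mathcal{C}_2$ bound dominates the $\mathcal{C}_q$ bound, together with the range where the spectral-measure estimates hold), not by the present kernel computation. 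The rest of the argument, including the treatment of the compact interior and the mixed collar/interior regions, is sound.
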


\begin{proof} 

Using H\"older's inequality with $1/p'+1/p=1$ and $p'\in [\frac{n+1}{n-1},\frac{n}{n-2}]$, we get 
\begin{align*} 
\|W_1K W_2\|_{\mc{C}_2}& \le 
\|W_1\|_{L^{2p}} \|W_2\|_{L^{2p}} \\
& \Big(\int_{M\x M}\frac{(x(m)x(m'))^{(n-1)p'}(\chi(\tfrac{x(m)}{\la})+\chi(\tfrac{x(m')}{\la}))^{2p'}}{(x(m)+x(m')+\la)^{2p'}}dV_g(m)dV_g(m')\Big)^{1/2p'}.
\end{align*}
We use the coordinates $m=(x,y), m'=(x',y')$ near the boundary, where the measure $dV_g(m)$ is comparable to 
$\frac{dxdy}{x^{n+1}}$.  Let us introduce the polar coordinates $(x,x')=(R\sin(\theta),R\cos(\theta))$ with $\theta\in[0,\pi/2]$, near $x=x'=0$. Using that $(n-1)p'-(n+1)\ge 0$ and $x+x'\sim R$, we get 
\begin{align*}
\Big(&\int_{M\x M}\frac{(xx')^{(n-1)p'}\chi(\tfrac{x}{\la})}{(x+x'+\la)^{2p'}}dV_{g}dV_{g'}\Big)^{\frac{1}{2p'}}
\le C \Big( \int_{0<x<2\lambda} \frac{(xx')^{(n-1)p'-(n+1)}}{(x+x'+\la)^{2p'}}dxdx' \Big)^{\frac{1}{2p'}}\\
&\le C \Big( \int_0^\infty \int_{0<\sin\theta<2\lambda/R}  \frac{R^{2(n-1)p'-2n-1}}{(R+\lambda)^{2p'}}dRd\theta  \Big)^{\frac{1}{2p'}}\\
&\le C\frac{1}{\lambda} \Big( \int_0^{2\lambda}  R^{2(n-1)p'-2n-1}  dR\Big)^{\frac{1}{2p'}}
+ C \Big( \int_{2\lambda}^\infty \int_{0<\theta\le \tilde C\lambda/R} R^{2(n-1)p'-2p'-2n-1}  dRd\theta\Big)^{\frac{1}{2p'}} \\
&\le C\lambda^{\frac{n}{p}-2}+ C\lambda^{\frac{1}{2p'}}  \Big( \int_{2\lambda}^\infty  R^{2(n-2)p'-2n-2}  dR\Big)^{\frac{1}{2p'}} \le C\lambda^{\frac{n}{p}-2}.
\end{align*}
Here we used that $(n-1)p'>n$ and $2(n-2)p'-2n-1<0$. The same argument works  with the term involving $\chi(x'/\la)$ and the estimate \eqref{eq_lem_K} follows. 
\end{proof}

\subsection{Analytic family of operators}   In this section we closely follow Section 4 of \cite{GuHa}, especially Remark 4.2 (which is essentially due to  Adam Sikora).  Let $\phi\in C^\infty_0(((1-\delta/4)^2,(1+\delta/4)^2))$ be such that $\phi(t)=1$ in a neighborhood of $t=1$, where $\delta>0$ is small, and consider the analytic family of  operators  in  $\Re(s)\le 0$,
\[
H_{s,z,\varepsilon}(\Delta_g)=\phi\bigg(\frac{\Delta_g}{z}\bigg)  (\Delta_g-(z+i\varepsilon))^s, \quad z>0,\quad \varepsilon>0.
\] 
By the spectral theorem, we have 
\begin{equation}
\label{eq_def_H_s_z}
H_{s,z,\varepsilon}(\Delta_g)=z^{s+\frac{1}{2}}\int_0^\infty \bigg(\lambda-\bigg(1+i\frac{\varepsilon}{z}\bigg)\bigg)^s\frac{\phi(\lambda)}{2\sqrt{\lambda}} dE_{\sqrt{\Delta_g}}(z^{\frac{1}{2}}\lambda^{\frac{1}{2}})d\lambda.
\end{equation}
Let $\eta>0$ be such that $z^{1/2}\in  [(1-\delta/2)\eta, (1+\delta/2)\eta]$  and let  $Q_i(\eta)$ and $Q_j(\eta)$ be such that the condition (2.iii) or (3.iii) of Proposition \ref{prop:partition of unity} holds, in the high energy, respectively, low energy case. Then using \eqref{eq_def_H_s_z}, we have on the level of Schwartz kernels, for $m,m'\in M$,
\begin{equation}
\label{eq_def_H_s_z_with_Q}
\big(Q_i(\eta)^*H_{s,z,\varepsilon}(\Delta_g)Q_j(\eta)\big)(m,m')=z^{s+\frac{1}{2}}\int_0^\infty \bigg(\lambda-\bigg(1+i\frac{\varepsilon}{z}\bigg)\bigg)^s\psi(\lambda)d\lambda,
\end{equation}
where 
\[
\psi(\lambda)=\frac{\phi(\lambda)}{2\sqrt{\lambda}} Q_i(\eta)^* dE_{\sqrt{\Delta_g}}(z^{\frac{1}{2}}\lambda^{\frac{1}{2}}) Q_j(\eta)(m,m').
\]
Here, as $\delta>0$ is small, we have $z^{1/2}\lambda^{1/2}\in [(1-\delta)\eta,(1+\delta)\eta]$ when $z^{1/2}\in  [(1-\delta/2)\eta, (1+\delta/2)\eta]$ and $\lambda\in \supp(\phi)$, and therefore, in view of \eqref{eq_10_1_partition}, we have
$\psi(\lambda) \in C^\infty_0(\R)$. 

Letting $\varepsilon\to 0$ in \eqref{eq_def_H_s_z_with_Q}, we define the operators $Q_i(\eta)^*H_{s,z,0}(\Delta_g)Q_j(\eta)$, when $z^{1/2}\in  [(1-\delta/2)\eta, (1+\delta/2)\eta]$,  as operators whose Schwartz kernels are given by 
\begin{equation}
\label{eq_def_H_s_z_with_Q_eps=0}
\begin{aligned}
\big(Q_i(\eta)^* H_{s,z,0}(\Delta_g)Q_j(\eta)\big)(m,m')&=z^{s+\frac{1}{2}}\int_0^\infty (\lambda-(1+i0))^s  \psi(\lambda) d\lambda\\
&= z^{s+\frac{1}{2}} \bigg(  (\lambda -i0)^s *  \psi(\lambda) \bigg)(1).
\end{aligned}
\end{equation}

We are interested in pointwise estimates for the  kernel of $Q_i(\eta)^*H_{s,z,0}(\Delta_g)Q_j(\eta)$ and to this end, we shall need the following result of \cite[Remark 4.2]{GuHa}. Even though the proof is almost the same as that of \cite[Lemma 3.3]{GHS}, for completeness we provide a proof in the Appendix \ref{app}. 
\begin{lem}
\label{Guillarmou_Hassell_remark}
Let $a<b<c\le 0$ and let us  write $b=\theta a +(1-\theta) c$, $0< \theta< 1$. Then there is $C>0$ such that for all $f\in C^\infty_0(\R)$, all $t\in \R$, and all $0<\varepsilon\ll 1$,  we have 
\begin{equation}
\label{eq_10_4_conv}
\|(\lambda\pm i \varepsilon)^{b+it}* f\|_{L^\infty_\lambda}\le C(1+|t|)e^{\frac{3\pi|t|}{2}}\|\chi_+^a*f\|_{L^\infty_\lambda}^\theta \|\chi_+^c*f\|_{L^\infty_\lambda}^{1-\theta}. 
\end{equation}
\end{lem}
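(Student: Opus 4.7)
The plan is to deduce \eqref{eq_10_4_conv} from the Hadamard three-lines theorem (equivalently, a Phragm\'en--Lindel\"of argument) applied to a single analytic family of scalar quantities parameterized by the exponent $s$ across the strip $\{a \leq \Re s \leq c\}$.

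First, for a fixed $\lambda_0 \in \R$, I would consider
$$F(s) = \bigl((\lambda \pm i\varepsilon)^s * f\bigr)(\lambda_0) = \int_\R (\lambda_0 - \mu \pm i\varepsilon)^s f(\mu)\, d\mu.$$
Because $\varepsilon > 0$, $(\lambda_0 - \mu \pm i\varepsilon)^s$ is holomorphic in $s$ via the principal branch, and $f$ has compact support, so $F$ is entire with at most exponential growth $|F(\sigma+it)| \leq C_{f,\varepsilon} e^{\pi|t|}$ on vertical lines. This controlled growth, after insertion of a standard regularizer $e^{-\delta s^2}$ with $\delta > 0$, ensures the applicability of the three-lines theorem on the strip $\{a \leq \Re s \leq c\}$.

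The second step is to bound $F$ on the two boundary lines $\Re s = a$ and $\Re s = c$ in terms of $\|\chi_+^a * f\|_{L^\infty}$ and $\|\chi_+^c * f\|_{L^\infty}$, respectively. The key algebraic input is the H\"ormander distributional identity
$$(\mu \pm i0)^s = \Gamma(s+1)\bigl[\chi_+^s(\mu) + e^{\pm i\pi s}\chi_+^s(-\mu)\bigr],$$
valid in $\mathcal{S}'(\R)$ and holomorphic in $s$. Passing $\varepsilon \to 0^+$ inside the integral defining $F$ (justified by analytic continuation from $\Re s > -1$) and convolving with $f$, one expresses $F(\sigma + it)$ as a sum of $(\chi_+^s * f)(\lambda_0)$ and a reflected convolution, both multiplied by $\Gamma(s+1)$. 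Combining Stirling's asymptotics
$$|\Gamma(\sigma + 1 + it)| \leq C(1+|t|)^{\sigma + 1/2}\, e^{-\pi|t|/2}$$
with the elementary bound $|e^{\pm i\pi s}| \leq e^{\pi|t|}$, and absorbing the reflection $\mu \mapsto -\mu$ into the supremum over $\lambda_0 \in \R$, I would obtain, for $\sigma \in \{a, c\}$,
$$\sup_{\lambda_0 \in \R} |F(\sigma + it)| \leq C(1+|t|)^{\sigma + 1/2}\, e^{3\pi|t|/2}\, \|\chi_+^\sigma * f\|_{L^\infty_\lambda}.$$

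Third, I would interpolate by applying the three-lines theorem to
$$G(s) = e^{-\delta s^2}\, F(s)\, \|\chi_+^a * f\|_{L^\infty}^{-(c-s)/(c-a)}\, \|\chi_+^c * f\|_{L^\infty}^{-(s-a)/(c-a)}$$
(assuming both endpoint norms are strictly positive; the degenerate cases follow by a limiting argument). Evaluating at $s = b + it$ with $b = \theta a + (1-\theta) c$, the log-convexity yields a bound whose dominant growth in $t$ is precisely $(1+|t|)\, e^{3\pi|t|/2}$, after sending $\delta \to 0$. Taking the supremum over $\lambda_0$ converts this pointwise estimate into the claimed $L^\infty_\lambda$ inequality.

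The main technical obstacle is the careful bookkeeping of constants so that the boundary factors combine to exactly $(1+|t|)\, e^{3\pi|t|/2}$ at $\Re s = b$, and in particular handling the reflected term $\chi_+^s(-\mu)$ from H\"ormander's identity so that the right-hand side of \eqref{eq_10_4_conv} features only $\|\chi_+^\sigma * f\|_{L^\infty_\lambda}$ and not its reflected cousin. Once this bookkeeping is in place, the argument closely parallels the proof of \cite[Lemma 3.3]{GHS}.
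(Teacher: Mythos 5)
Your approach is genuinely different from the paper's: you propose a Hadamard three-lines / Stein interpolation argument, whereas the paper's proof (following [GHS, Lemma 3.3]) is a factorization argument. The paper writes the Fourier multiplier of $(\lambda - i\varepsilon)^{b+it}$ as the product of the multiplier of $\sigma\chi_+^c + \chi_+^a$ (for a suitable unimodular $\sigma$) and an explicit multiplier $\hat\eta_t$ whose inverse Fourier transform is shown to lie in $L^1(\R)$ with $\|\eta_t\|_{L^1} \le C(1+|t|)e^{3\pi|t|/2}$; this immediately gives the additive bound $\|(\lambda-i\varepsilon)^{b+it}*f\|_{L^\infty} \le C(1+|t|)e^{3\pi|t|/2}(\|\chi_+^a*f\|_{L^\infty}+\|\chi_+^c*f\|_{L^\infty})$, and a dilation $f\mapsto f(\tau\cdot)$ with an optimal choice of $\tau$ converts the sum into the geometric mean. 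No complex interpolation is used.

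The critical gap in your proposal is the treatment of the reflected term. Hörmander's identity expands $(\mu - i0)^s$ as $\Gamma(s+1)\bigl[\chi_+^s(\mu) + e^{-i\pi s}\chi_-^s(\mu)\bigr]$, so after convolving with $f$ you must bound both $\|\chi_+^\sigma * f\|_{L^\infty}$ \emph{and} $\|\chi_-^\sigma * f\|_{L^\infty}$ on the boundary lines $\Re s = \sigma \in \{a,c\}$. Your claim that the reflection can be ``absorbed into the supremum over $\lambda_0$'' is incorrect: a change of variables gives $(\chi_-^\sigma * f)(\lambda_0) = (\chi_+^\sigma * \tilde f)(-\lambda_0)$ with $\tilde f(\mu) := f(-\mu)$, so $\|\chi_-^\sigma * f\|_{L^\infty} = \|\chi_+^\sigma * \tilde f\|_{L^\infty}$, which is a norm of a \emph{different} function and is not controlled by $\|\chi_+^\sigma * f\|_{L^\infty}$ in general. (For $\sigma$ a negative integer one has $\chi_-^{-k} = (-1)^{k-1}\chi_+^{-k}$ and the two coincide, but already for $\sigma = 0$, which occurs in the $n=3$ case of Proposition~\ref{prop:Hsz}, $\chi_-^0 = 1 - H$ while $\chi_+^0 = H$, and the two norms are genuinely independent.) The paper's factorization sidesteps this entirely because the operator $B$ is built from $\chi_+^a$ and $\chi_+^c$ only; no $\chi_-$ term ever appears. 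Your route would at best prove a weaker estimate with $\max(\|\chi_+^\sigma*f\|,\|\chi_+^\sigma*\tilde f\|)$ on the right-hand side, which is not the stated lemma.

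Two further, more minor points. First, the regularizer $e^{-\delta s^2}$ has $|e^{-\delta s^2}| = e^{-\delta\sigma^2}e^{\delta t^2}$, which \emph{grows} in $|t|$; you want $e^{\delta s^2}$ (or better, the standard $e^{\delta(s^2 - b^2)}$), though this is a sign slip. Second, even with the right regularizer, sending $\delta\to 0$ directly as you describe will blow up the constants coming from $\sup_t e^{-\delta t^2}(1+|t|)e^{3\pi|t|/2}$; making the three-lines argument produce the exact polynomial-times-exponential factor $(1+|t|)e^{3\pi|t|/2}$ in the interior requires the Hirschman--Calder\'on version of the three-lines theorem with growing boundary data and a careful translation $s\mapsto s+it$, none of which is spelled out. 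The paper's $L^1$-kernel argument avoids all of this.
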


We have the following result. 

\begin{prop}
\label{prop:Hsz} Suppose that $(i,j)$ are such that the condition (2.iii) or (3.iii) holds, in the high energy, respectively,  low energy case. Then there is $C>0$ such that the kernel of the operator $Q_i(\eta)^* H_{s,z,0}(\Delta_g)Q_j(\eta)$ with $z>0$ and $z^{\frac{1}{2}}\in [(1-\delta/2)\eta,(1+\delta/2)\eta]$ has the following pointwise estimates, 

(i) For $\emph{\text{Re}}(s) = -\frac{(n+1)}{2}$, we have 
\begin{equation}
\Big| Q_i(\eta)^* H_{s,z,0}(\Delta_g)Q_j(\eta)(m,m') \Big| \leq C e^{C|\emph{\text{Im}}\,(s)|}z^{-\frac{1}{2}} 
\label{Hs=-(n+1)/2}\end{equation}
for all $m,m'\in M$, uniformly in $z$ and $\eta$.

(ii) For $\emph{\text{Re}} (s) = -\frac{(n-1)}{2}$, we have 
\begin{equation}
\Big| Q_i(\eta)^* H_{s,z,0}(\Delta_g)Q_j(\eta)(m,m')  \Big| \leq C e^{C|\emph{\text{Im}}\,(s)|} d(m,m')^{-1}. 
\label{Hs=-(n-1)/2}
\end{equation}
for all $m,m'\in M$, uniformly in $z$ and $\eta$.
\end{prop}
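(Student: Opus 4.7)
The plan is to work from the formula \eqref{eq_def_H_s_z_with_Q_eps=0}, which expresses the Schwartz kernel as $z^{s+1/2}\big((\lambda - i 0)^s*\psi\big)(1)$, where
\[
\psi(\lambda) = \frac{\phi(\lambda)}{2\sqrt{\lambda}}\big(Q_i(\eta)^* dE_{\sqrt{\Delta_g}}(z^{1/2}\lambda^{1/2}) Q_j(\eta)\big)(m,m')
\]
is a smooth function of $\lambda$ with compact support near $\lambda = 1$, whose behavior is completely governed by the spectral measure estimates \eqref{eq_10_1_partition}--\eqref{eq_10_4_partition} of Proposition~\ref{prop:partition of unity} in cases (2.iii) and (3.iii). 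The key device to handle the non-integer exponent on $(\lambda - i0)^s$ is Lemma~\ref{Guillarmou_Hassell_remark}: with $b = \text{Re}(s)$ and a choice $a < b < c \le 0$, it bounds $\big|((\lambda-i0)^s*\psi)(1)\big|$ by $C(1+|\text{Im}(s)|)e^{3\pi|\text{Im}(s)|/2}\,\|\chi_+^a*\psi\|_{L^\infty}^{\theta}\, \|\chi_+^c*\psi\|_{L^\infty}^{1-\theta}$, thus reducing everything to computing the endpoint norms.

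For part (i), I would pick $c=0$ and $a=-N$ with a positive integer $N>(n+1)/2$. The upper endpoint gives $\|\chi_+^0*\psi\|_{L^\infty}\le \|\psi\|_{L^1}\le C z^{(n-1)/2}$ from \eqref{eq_10_1_partition} combined with the compact support of $\phi$. The lower endpoint equals $\|\psi^{(N-1)}\|_{L^\infty}$, which I would bound using the oscillatory representation \eqref{eq_10_2_partition}--\eqref{eq_10_4_partition}: the amplitudes $a_\pm$ and the remainder $b$ have controlled derivatives, and after accounting for the chain-rule factor $z^{1/2}/(2\sqrt{\lambda})$ one obtains $\|\psi^{(N-1)}\|_{L^\infty}\le C z^{(n-1)/2}(1+z^{1/2}d(m,m'))^{-(n-1)/2+N-1}$. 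The interpolation then yields $|J_s(m,m')|\le C e^{C|\text{Im}(s)|} z^{(n-1)/2}$ uniformly in $m,m'$, and multiplication by $|z^{s+1/2}|=z^{-n/2}$ produces \eqref{Hs=-(n+1)/2}.

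For part (ii), the strategy is to establish the refined bound
\[
|J_s(m,m')| \le C e^{C|\text{Im}(s)|} z^{(n-1)/2}(1+z^{1/2}d(m,m'))^{-(n-1)/2};
\]
once in hand, combining with $|z^{s+1/2}|=z^{-(n-2)/2}$, the elementary inequality $(1+z^{1/2}d)^{-(n-1)/2}\le (z^{1/2}d)^{-1}$ valid for $n\ge 3$ and $z^{1/2}d\ge 1$ in the regime $z^{1/2}d\ge 1$, and the direct bound $z^{1/2}\le d^{-1}$ in the complementary regime $z^{1/2}d<1$, yields \eqref{Hs=-(n-1)/2}. To prove the refined estimate on $J_s$, I would apply Lemma~\ref{Guillarmou_Hassell_remark} with endpoints chosen so that the $(1+z^{1/2}d)^{-(n-1)/2}$ decay in the amplitude is preserved through the interpolation. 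This demands a careful treatment of $\|\chi_+^a*\psi\|_{L^\infty}$ at negative integer $a$, where the oscillatory splitting \eqref{eq_10_2_partition} is crucial: derivatives of $\psi_\pm$ bring down factors of $(z^{1/2}d)$ from the phase $e^{\pm iz^{1/2}\lambda^{1/2}d(m,m')}$ rather than from the amplitude, and these phase factors can be absorbed via integration by parts in $\lambda$ against the oscillation.

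The main obstacle is in part (ii): the crude bound $\|\psi^{(N-1)}\|_{L^\infty}\le Cz^{(n-1)/2}(1+z^{1/2}d)^{-(n-1)/2+N-1}$ grows in $d$ for large $N$, so a naive interpolation between it and $\|\psi\|_{L^\infty}\le Cz^{(n-1)/2}(1+z^{1/2}d)^{-(n-1)/2}$ fails to preserve the full $(n-1)/2$ power of decay in $(1+z^{1/2}d)$ at the intermediate point $b=-(n-1)/2$. Overcoming this requires exploiting the oscillatory structure \eqref{eq_10_2_partition}: the $d$-growth in derivatives arises entirely from the phase, and repeated integration by parts against this phase, in tandem with the interpolation lemma, converts polynomial growth in $d$ into polynomial decay. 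Making this precise so that the factor $e^{C|\text{Im}(s)|}$ remains under control is the delicate bookkeeping that lies at the heart of the proof.
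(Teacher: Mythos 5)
Your part (i) does not go through as written. With your choice $a=-N$, $c=0$, the interpolation parameter is $\theta = \frac{(n+1)/2 - 0}{N - 0} = \frac{n+1}{2N}$, and combining your two endpoint bounds $\|\psi^{(N-1)}\|_{L^\infty}\le Cz^{(n-1)/2}(1+z^{1/2}d)^{N-1-\frac{n-1}{2}}$ and $\|\chi_+^0*\psi\|_{L^\infty}\le Cz^{(n-1)/2}$ gives
\[
|J_s|\le C e^{C|\mathrm{Im}\,s|}\,z^{(n-1)/2}\,(1+z^{1/2}d)^{\theta\left(N-\frac{n+1}{2}\right)}
= C e^{C|\mathrm{Im}\,s|}\,z^{(n-1)/2}\,(1+z^{1/2}d)^{\frac{n+1}{2N}\left(N-\frac{n+1}{2}\right)},
\]
and the exponent $\frac{n+1}{2N}\left(N-\frac{n+1}{2}\right)$ is strictly positive for every admissible $N>\frac{n+1}{2}$ (and tends to $\frac{n+1}{2}$ as $N\to\infty$). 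So the bound you obtain \emph{grows} in $d(m,m')$ rather than being uniform in $m,m'$. The defect is that your $c=0$ endpoint carries no $d$-decay (the crude bound $\|\psi\|_{L^1}$ throws away all oscillation), so it cannot compensate the $d$-growth coming from the derivative endpoint. The cure is to take both $a$ and $c$ to be negative integers straddling $b=-\frac{n+1}{2}$ as tightly as possible, so that the $(1+z^{1/2}d)$ powers from the two endpoints cancel: e.g.\ for $n$ odd, $a=-\frac{n+3}{2}$, $c=-\frac{n-1}{2}$, $\theta=\frac12$; then $\|\chi_+^a*\psi\|\lesssim z^{(n-1)/2}(1+z^{1/2}d)^{+1}$ and $\|\chi_+^c*\psi\|\lesssim z^{(n-1)/2}(1+z^{1/2}d)^{-1}$, whose geometric mean is $z^{(n-1)/2}$, uniform in $d$. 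This is exactly the mechanism the paper uses in its proof of part (ii) with $a=-\frac{n+1}{2}$, $c=-\frac{n-3}{2}$ for $n\ge5$; the paper handles part (i) by citing the analogous argument in \cite[Remark 4.2]{GuHa}.

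For part (ii), the "main obstacle" you describe is in fact illusory, and you would have been fine had you simply accepted the decay that the naive interpolation gives. Take $a=-N$ and $c=-1$ (so $\chi_+^c*\psi=\psi$, valid for $n\ge4$ since then $-\frac{n-1}{2}<-1$). Then $\theta=\frac{(n-3)/2}{N-1}$, and the exponent on $(1+z^{1/2}d)$ is $\theta\bigl(N-1-\tfrac{n-1}{2}\bigr)+(1-\theta)\bigl(-\tfrac{n-1}{2}\bigr)=\theta(N-1)-\tfrac{n-1}{2}=\tfrac{n-3}{2}-\tfrac{n-1}{2}=-1$, \emph{independent of} $N$. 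So the naive interpolation gives $|J_s|\le Ce^{C|\mathrm{Im}\,s|}z^{(n-1)/2}(1+z^{1/2}d)^{-1}$, and multiplying by $|z^{s+1/2}|=z^{-(n-2)/2}$ yields $z^{1/2}(1+z^{1/2}d)^{-1}\le d^{-1}$ directly. The stronger decay $(1+z^{1/2}d)^{-(n-1)/2}$ you aim for is not needed, and pursuing it by integrating by parts against the phase in \eqref{eq_10_2_partition} is precisely the complication you cannot carry through; the paper avoids it for $n\ge4$ by using \eqref{eq_10_1_partition} only. For $n=3$ (where $b=-1$, so $c=-1$ is not an admissible upper endpoint) the paper is forced to take $c=0$ and does need the oscillatory representation to show $\|H*\psi\|_{L^\infty}\lesssim d^{-2}$; you would also need to treat this low-dimensional case separately.

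So, in brief: part (i) has a genuine gap (the interpolation inequality as you set it up fails to give a $d$-uniform bound), while part (ii) is correct in outline but aims at a stronger and unnecessary estimate, which you then leave unproved; replacing your target with the weaker $(1+z^{1/2}d)^{-1}$ decay makes the argument close easily and matches the paper's proof.
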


\begin{proof} Estimate \eqref{Hs=-(n+1)/2} is proved in \cite[Remark 4.2]{GuHa}. Estimate \eqref{Hs=-(n-1)/2} is proved in the same way, except for the case $n=3$, relying on the estimates \eqref{eq_10_1_partition} only. Indeed, in the case $n\ge 5$ is odd, we take 
$a = -\frac{(n+1)}{2}$ and  $c = -\frac{(n-3)}{2}$
 in Lemma~\ref{Guillarmou_Hassell_remark} and using that 
 \[
\chi_+^{-k}=\delta_0^{(k-1)}, \quad k=1,2,\dots, 
\] 
we get 
\begin{align*}
\Big| Q_i(\eta)^* H_{s,z,0}(\Delta_g)Q_j(\eta)(m,m')  \Big| &\leq Cz^{\frac{2-n}{2}}(1+|\text{Im}(s)|)e^{\frac{3\pi |\text{Im} (s)|}{2}}\\
& \x\bigg\|\p_\lambda^{\frac{n-1}{2}}\bigg(\frac{\phi(\lambda)}{2\sqrt{\lambda}} Q_i(\eta)^* dE_{\sqrt{\Delta_g}}(z^{\frac{1}{2}}\lambda^{\frac{1}{2}}) Q_j(\eta)(m,m')\bigg)\bigg\|_{L^\infty}^{1/2} \\
&\x \bigg\|\p_\lambda^{\frac{n-5}{2}}\bigg(\frac{\phi(\lambda)}{2\sqrt{\lambda}} Q_i(\eta)^* dE_{\sqrt{\Delta_g}}(z^{\frac{1}{2}}\lambda^{\frac{1}{2}}) Q_j(\eta)(m,m')\bigg)\bigg\|_{L^\infty}^{1/2}, 
\end{align*}
and therefore, using  \eqref{eq_10_1_partition}, we obtain that 
\begin{equation}
\label{eq_a_b_odd}
\begin{aligned}
\Big| Q_i(\eta)^* H_{s,z,0}(\Delta_g)Q_j(\eta)(m,m')  \Big|&\le Ce^{C|\text{Im}(s)|}z^{\frac{1}{2}}(1+z^{\frac{1}{2}}d(m,m'))^{-1}\\
&\le C e^{C|\text{Im}\,(s)|} d(m,m')^{-1}.
\end{aligned}
\end{equation}
For $n \geq 4$ even, taking  $a = -\frac{n}{2}$, $c = -\frac{(n-2)}{2}$ in Lemma~\ref{Guillarmou_Hassell_remark} and using \eqref{eq_10_1_partition}, we also get \eqref{eq_a_b_odd}. 
We have therefore established \eqref{Hs=-(n-1)/2} for all $n\ge 4$.

When $n=3$, using Lemma~\ref{Guillarmou_Hassell_remark} with $a=-2$ and $c=0$, and the fact that $\chi_+^0(\lambda)=H(\lambda)$ is the Heaviside function, we obtain that 
\begin{equation}
\label{eq_n=3_pointwise_1}
\begin{aligned}
\Big| Q_i(\eta)^* H_{s,z,0}(\Delta_g)Q_j(\eta)(m,m')  \Big| &\le Cz^{-\frac{1}{2}}(1+|\text{Im}(s)|)e^{\frac{3\pi |\text{Im} (s)|}{2}}\\
&\x \bigg\|\p_\lambda \bigg(\frac{\phi(\lambda)}{2\sqrt{\lambda}} Q_i(\eta)^* dE_{\sqrt{\Delta_g}}(z^{\frac{1}{2}}\lambda^{\frac{1}{2}}) Q_j(\eta)(m,m')\bigg)\bigg\|_{L^\infty}^{1/2} \\
&\x\bigg\|H*\bigg(\frac{\phi(\lambda)}{2\sqrt{\lambda}} Q_i(\eta)^* dE_{\sqrt{\Delta_g}}(z^{\frac{1}{2}}\lambda^{\frac{1}{2}}) Q_j(\eta)(m,m')\bigg)\bigg\|_{L^\infty}^{1/2}.
\end{aligned}
\end{equation} 
By \eqref{eq_10_1_partition}, we get 
\begin{equation}
\label{eq_n=3_pointwise_2}
\bigg\|\p_\lambda \bigg(\frac{\phi(\lambda)}{2\sqrt{\lambda}} Q_i(\eta)^* dE_{\sqrt{\Delta_g}}(z^{\frac{1}{2}}\lambda^{\frac{1}{2}}) Q_j(\eta)(m,m')\bigg)\bigg\|_{L^\infty}\le Cz.
\end{equation}
Now if we show that 
\begin{equation}
\label{eq_n=3_pointwise_3}
\bigg\|H*\bigg(\frac{\phi(\lambda)}{2\sqrt{\lambda}} Q_i(\eta)^* dE_{\sqrt{\Delta_g}}(z^{\frac{1}{2}}\lambda^{\frac{1}{2}}) Q_j(\eta)(m,m')\bigg)\bigg\|_{L^\infty}\le Cd(m,m')^{-2},
\end{equation}
then the estimate   \eqref{Hs=-(n-1)/2} will follow from \eqref{eq_n=3_pointwise_1}, \eqref{eq_n=3_pointwise_2} and \eqref{eq_n=3_pointwise_3}. To prove \eqref{eq_n=3_pointwise_3}, using \eqref{eq_10_2_partition}, we write 
\begin{equation}
\label{eq_n=3_pointwise_4}
\begin{aligned}
H*\bigg(\frac{\phi(\lambda)}{2\sqrt{\lambda}} &Q_i(\eta)^* dE_{\sqrt{\Delta_g}}(z^{\frac{1}{2}}\lambda^{\frac{1}{2}}) Q_j(\eta)(m,m')\bigg)(\lambda)\\
&=\int_{0}^{\lambda^{\frac{1}{2}}} \phi(\mu^2) Q_i(\eta)^* dE_{\sqrt{\Delta_g}}(z^{\frac{1}{2}}\mu ) Q_j(\eta)(m,m')d\mu\\
&=\int_{0}^{\lambda^{\frac{1}{2}}} \phi(\mu^2) z\mu^2 \bigg[\sum_{\pm} e^{\pm i z^{\frac{1}{2}}\mu d(m,m')}a_\pm(z^{\frac{1}{2}}\mu,m,m')+b(z^{\frac{1}{2}}\mu,m,m')\bigg] d\mu.
\end{aligned}
\end{equation}
The terms involving $a_\pm$ in  \eqref{eq_n=3_pointwise_4} can be treated similarly and in what follows we shall only consider the term involving $a_+$ and drop the sign $+$.  To estimate this term, we integrate by parts and get 
\begin{equation}
\label{eq_n=3_pointwise_6}
\begin{aligned}
\int_{0}^{\lambda^{\frac{1}{2}}} \phi(\mu^2) z\mu^2 e^{i z^{\frac{1}{2}}\mu d(m,m')}&a(z^{\frac{1}{2}}\mu,m,m')d\mu\\
&=\frac{1}{i z^{\frac{1}{2}}d(m,m')}\bigg[   \phi(\mu^2) z\mu^2 e^{i z^{\frac{1}{2}}\mu d(m,m')}a(z^{\frac{1}{2}}\mu,m,m')|_{\mu=0}^{\mu=\lambda^{\frac{1}{2}}} \\
&-\int_{0}^{\lambda^{\frac{1}{2}}}\p_{\mu} \big( \phi(\mu^2) z\mu^2 a(z^{\frac{1}{2}}\mu,m,m') \big) e^{i z^{\frac{1}{2}}\mu d(m,m')} d\mu\bigg].
\end{aligned}
\end{equation} 
Estimating the terms in the left hand side of \eqref{eq_n=3_pointwise_6} with the help of \eqref{eq_10_3_partition}, we obtain that  
\begin{equation}
\label{eq_n=3_pointwise_7}
\bigg| \int_{0}^{\lambda^{\frac{1}{2}}} \phi(\mu^2) z\mu^2 e^{i z^{\frac{1}{2}}\mu d(m,m')}a(z^{\frac{1}{2}}\mu,m,m')d\mu\bigg|\le C\lambda^{\frac{1}{2}}d(m,m')^{-2},
\end{equation}
 uniformly in $z$.  To estimate the term involving the remainder $b$ in  \eqref{eq_n=3_pointwise_4}, we use \eqref{eq_10_4_partition} with $K=2$ and get
\begin{equation}
\label{eq_n=3_pointwise_5}
\begin{aligned}
\int_{0}^{\lambda^{\frac{1}{2}}} \phi(\mu^2) z\mu^2|b(z^{\frac{1}{2}}\mu,m,m')|d\mu&\le C\int_{0}^{\lambda^{\frac{1}{2}}} \phi(\mu^2) z\mu^2 (1+z^{\frac{1}{2}}\mu d(m,m'))^{-2}d\mu\\
&\le Cd(m,m')^{-2}.
\end{aligned}
\end{equation}
 Now \eqref{eq_n=3_pointwise_3} follows from \eqref{eq_n=3_pointwise_4},   \eqref{eq_n=3_pointwise_7} and \eqref{eq_n=3_pointwise_5}. This completes the proof of estimate \eqref{Hs=-(n-1)/2}. 
\end{proof}

When proving the Schatten bound on the resolvent on the spectrum in Section \ref{sec_resolvent_on_spec} below, the cases (2.iii) and (3.iii) of   Proposition~\ref{prop:partition of unity} will be treated using the following result. 

\begin{prop} 
\label{prop_cases_2_iii_3_iii}
Suppose that $(i,j)$ are such that the condition (2.iii) or (3.iii) holds, in the high energy, respectively low energy case.  Let $p\in [\frac{n}{2},\frac{n+1}{2}]$. Then there is $C>0$ such that for all $z\in (0,\infty)$, $z^{\frac{1}{2}}\in [(1-\delta/2)\eta,(1+\delta/2)\eta]$, and all $W_1, W_2\in L^{2p}(M)$, we have $W_1Q_i(\eta)^*H_{-1,z,0}(\Delta_g)Q_j(\eta)W_2\in \mathcal{C}_q(L^2(M))$, $q=\frac{p(n-1)}{n-p}$, and 
\begin{equation}
\label{eq_prop_cases_2_iii_3_iii}
\|W_1Q_i(\eta)^*H_{-1,z,0}(\Delta_g)Q_j(\eta)W_2\|_{\mathcal{C}_q}\le C z^{-1+\frac{n}{2p}}\|W_1\|_{L^{2p}(M)}\|W_2\|_{L^{2p}(M)}.
\end{equation}
\end{prop}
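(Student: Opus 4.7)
The plan is to combine the pointwise kernel estimates of Proposition~\ref{prop:Hsz} with the Frank--Sabin interpolation scheme of Proposition~\ref{prop:Frank-Sabin} to establish the two endpoint cases $p = \frac{n+1}{2}$ and $p = \frac{n}{2}$, and then to interpolate complex-analytically in the weights $W_1, W_2$ to reach all intermediate $p$. A point to verify throughout is the $L^2 \to L^2$ bound on $H_{iy,z,0}(\Delta_g) = \phi(\Delta_g/z)(\Delta_g-(z+i0))^{iy}$ with controlled exponential growth in $|y|$; this follows from the spectral theorem together with $|(\mu - (z+i0))^{iy}| \le e^{\pi|y|}$, a consequence of the $+i0$ prescription fixing the branch of the complex power as $\mu$ crosses $z$. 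This is the one genuinely delicate input, and will be the main obstacle to check.

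\emph{Endpoint $p = \frac{n+1}{2}$.} I would apply Proposition~\ref{prop:Frank-Sabin} to the analytic family $T_s := Q_i(\eta)^* H_{s,z,0}(\Delta_g) Q_j(\eta)$ on the strip $-\frac{n+1}{2} \le \mathrm{Re}\,s \le 0$, with $\lambda_0 = \frac{n+1}{2}$. At $\mathrm{Re}\,s = 0$ the observation above together with \eqref{norm-uniform} gives $\|T_{iy}\|_{L^2 \to L^2} \le C e^{\pi|y|}$; at $\mathrm{Re}\,s = -\frac{n+1}{2}$ the pointwise bound \eqref{Hs=-(n+1)/2} of Proposition~\ref{prop:Hsz} furnishes $\|T_s\|_{L^1 \to L^\infty} \le C e^{C|\mathrm{Im}\,s|} z^{-1/2}$. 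Proposition~\ref{prop:Frank-Sabin} then produces \eqref{eq_prop_cases_2_iii_3_iii} at $p = \frac{n+1}{2}$, $q = n+1$, with $z$-exponent $-1/(n+1) = -1 + n/(n+1)$.

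\emph{Endpoint $p = \frac{n}{2}$.} Here I would follow the scheme of Proposition~\ref{negativez}: for nonnegative simple $W_1, W_2$ consider $S_s := W_1^{-s} T_s W_2^{-s}$ on $-\frac{n-1}{2} \le \mathrm{Re}\,s \le 0$. On $\mathrm{Re}\,s = 0$ one again has $\|S_{iy}\|_{L^2 \to L^2} \le Ce^{\pi|y|}$ since $|W_j^{-iy}| = 1$, while on $\mathrm{Re}\,s = -\frac{n-1}{2}$ the pointwise bound \eqref{Hs=-(n-1)/2} combined with H\"older's inequality and the generalized Hardy--Littlewood--Sobolev inequality of \cite{GaGa} gives the Hilbert--Schmidt estimate
\[
\|S_s\|_{\mathcal{C}_2} \le C e^{C|\mathrm{Im}\,s|} \|W_1\|_{L^n(M)}^{(n-1)/2} \|W_2\|_{L^n(M)}^{(n-1)/2},
\]
exactly as in the proof of Proposition~\ref{negativez}. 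Interpolation in the Frank--Sabin style with $\lambda_0 = \frac{n-1}{2}$, followed by density, delivers \eqref{eq_prop_cases_2_iii_3_iii} at $p = \frac{n}{2}$, $q = n-1$ (with no $z$-factor, as $-1 + n/n = 0$).

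\emph{Intermediate $p$.} For $p \in (\frac{n}{2}, \frac{n+1}{2})$, I would fix $W_1, W_2 \ge 0$ simple with $\|W_j\|_{L^{2p}(M)} = 1$ and set $\alpha(\zeta) := \frac{2p}{n+1} + \zeta \cdot \frac{2p}{n(n+1)}$ on the strip $0 \le \mathrm{Re}\,\zeta \le 1$. The analytic family $U_\zeta := W_1^{\alpha(\zeta)} T_{-1} W_2^{\alpha(\zeta)}$ satisfies $\|U_\zeta\|_{\mathcal{C}_{n+1}} \le Cz^{-1/(n+1)}$ on $\mathrm{Re}\,\zeta = 0$ (where $\|W_j^{\alpha(\zeta)}\|_{L^{n+1}} = 1$, by the first endpoint) and $\|U_\zeta\|_{\mathcal{C}_{n-1}} \le C$ on $\mathrm{Re}\,\zeta = 1$ (where $\|W_j^{\alpha(\zeta)}\|_{L^n} = 1$, by the second endpoint). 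Stein complex interpolation on Schatten classes \cite[Theorem~2.9]{Simon_trace} applied at $\zeta = \theta_0 := n(n+1-2p)/(2p) \in (0,1)$, where $\alpha(\theta_0) = 1$ so that $U_{\theta_0} = W_1 T_{-1} W_2$, yields an estimate in $\mathcal{C}_{q}$ with $1/q = (1-\theta_0)/(n+1) + \theta_0/(n-1)$ and bound $Cz^{-(1-\theta_0)/(n+1)}$. A short algebraic check gives $1/q = (n-p)/(p(n-1))$ and $(1-\theta_0)/(n+1) = 1 - n/(2p)$, matching \eqref{eq_prop_cases_2_iii_3_iii}; a standard density argument removes the simplicity assumption.
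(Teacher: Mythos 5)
Your proposal is correct and follows essentially the same route as the paper: the $L^2\to L^2$ bound at $\Re s=0$ by spectral theorem, the two endpoint Schatten bounds via Proposition~\ref{prop:Frank-Sabin} at $\Re s=-\frac{n+1}{2}$ and the Proposition~\ref{negativez} scheme at $\Re s=-\frac{n-1}{2}$, and a final Stein interpolation in the weight exponent. The only cosmetic difference is the normalization $\|W_j\|_{L^{2p}}=1$ with exponent $\frac{2p}{n+1}+\zeta\frac{2p}{n(n+1)}$, whereas the paper normalizes $\|W_j\|_{L^2}=1$ with the exponent rescaled by $1/p$; the two parametrizations are equivalent under $W_j\mapsto W_j^{1/p}$.
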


\begin{proof}
First thanks to Proposition~\ref{prop:Hsz}, case (i), we know that for $\Re s=-\frac{(n+1)}{2}$, 
\[
\|Q_i(\eta)^* H_{s, z,0}(\Delta_g) Q_j(\eta)\|_{L^1(M)\to L^\infty(M)}\le Ce^{C|\text{Im}(s)|}z^{-\frac{1}{2}}.
\] 
By spectral theorem, we also know that for $\Re s=0$, 
\[
\|Q_i(\eta)^* H_{s, z,0}(\Delta_g) Q_j(\eta)\|_{L^2(M)\to L^2(M)}\le Ce^{\pi |\text{Im}(s)|}.
\] 
Hence, Proposition~\ref{prop:Frank-Sabin} implies that $W_1 Q_i(\eta)^* H_{-1, z,0}(\Delta_g) Q_j(\eta)W_2\in \mathcal{C}_{n+1}(L^2(M))$ and moreover, 
\begin{equation}
\label{eq_zreel_res_2}
\|W_1 Q_i(\eta)^* H_{-1, z,0}(\Delta_g) Q_j(\eta)W_2\|_{\mathcal{C}_{n+1}}\le C z^{-\frac{1}{n+1}}\|W_1\|_{L^{n+1}(M)}\|W_2\|_{L^{n+1}(M)}.
\end{equation}
Now when  $\Re s = -\frac{(n-1)}{2}$, thanks to Proposition~\ref{prop:Hsz} (ii),  the kernel of the operator $Q_i(\eta)^* H_{s, z,0}(\Delta_g) Q_j(\eta)$ has the bound \eqref{Hs=-(n-1)/2}, which is the same as the bound \eqref{heatkernel_k_2} in the proof of Proposition~\ref{negativez}. Proceeding exactly as in the proof of Proposition~\ref{negativez}, we get 
\begin{equation}
\label{eq_zreel_res_3}
\|W_1 Q_i(\eta)^* H_{-1, z,0}(\Delta_g) Q_j(\eta)W_2\|_{\mathcal{C}_{n-1}}\le C \|W_1\|_{L^{n}(M)}\|W_2\|_{L^{n}(M)}.
\end{equation}

In view of  \eqref{eq_zreel_res_2} and \eqref{eq_zreel_res_3}, 
the bound \eqref{eq_prop_cases_2_iii_3_iii} follows by a complex interpolation argument applied to the analytic family of operators 
\[
\zeta\mapsto W_1^{\frac{2}{n+1}+\zeta\frac{2}{n(n+1)}} Q_i(\eta)^* H_{-1, z,0}(\Delta_g) Q_j(\eta)  W_2^{\frac{2}{n+1}+\zeta\frac{2}{n(n+1)}} 
\]
 in the strip $0\le \Re\zeta\le 1$, with $W_j\ge 0$ being simple functions such that $\|W_j\|_{L^2(M)}=1$, $j=1,2$, see  \cite[p. 154]{Simon_operator_theory}.
\end{proof}

\subsection{Resolvent estimates on the spectrum}

\label{sec_resolvent_on_spec}

The final ingredient in the proof of  Theorem~\ref{thm_resolvent_Schatten_laplacian} is the following result. 
\begin{prop}
\label{zreel}
Let $\phi\in C^\infty_0(((1-\delta/4)^2,(1+\delta/4)^2))$ be such that $\phi(t)=1$ in a neighborhood of $t=1$, where $\delta>0$ is small, and let $p\in [\frac{n}{2},\frac{n+1}{2}]$. 
Then
there is $C>0$ such that for all $z\in (0,\infty)$ and all $W_1, W_2\in L^{2p}(M)$, then for $q=\frac{p(n-1)}{n-p}$ we have $W_1 \phi(\frac{\Delta_g}{z}) (\Delta_g-(z+i0))^{-1}W_2\in \mathcal{C}_q(L^2(M))$ and
\begin{equation}
\label{eq_zreel_res}
\bigg\|W_1   \phi\bigg(\frac{\Delta_g}{z}\bigg)(\Delta_g-(z+i0))^{-1}W_2\bigg\|_{\mc{C}_{q}}\leq Cz^{-1+\frac{n}{2p}}\|W_1\|_{L^{2p}(M)} \|W_2\|_{L^{2p}(M)}.
\end{equation}
\end{prop}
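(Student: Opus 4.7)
The plan is to insert the operator partition of unity of Proposition~\ref{prop:partition of unity} at scale $\eta$ chosen so that $z^{1/2}\in[(1-\delta/2)\eta,(1+\delta/2)\eta]$ (concretely $\eta=z^{1/2}$, using the high-energy family when $z\geq 1/4$ and the low-energy family when $z\leq 4$) and decompose
\[
W_1\phi(\Delta_g/z)(\Delta_g-(z+i0))^{-1}W_2=\sum_{i,j}W_1Q_i(\eta)^*H_{-1,z,0}(\Delta_g)Q_j(\eta)W_2.
\]
Since the number of summands is uniformly bounded, it suffices to bound each term by the right-hand side of \eqref{eq_zreel_res}.

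For diagonal pairs $i=j$, and more generally for any $(i,j)$ satisfying alternative (2.iii) or (3.iii), the required bound is precisely Proposition~\ref{prop_cases_2_iii_3_iii}, which was obtained through an analytic family argument and the pointwise kernel estimates of Proposition~\ref{prop:Hsz}, themselves based on the spectral measure bounds \eqref{eq_10_1_partition}--\eqref{eq_10_4_partition}. This accounts for the bulk of the contributions.

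For off-diagonal pairs in cases (2.ii)/(3.ii), the idea is to use Stone's formula
\[
(\Delta_g-(z+i0))^{-1}=(\Delta_g-(z-i0))^{-1}-\frac{i\pi}{\sqrt{z}}\,dE_{\sqrt{\Delta_g}}(\sqrt{z})
\]
to rewrite $Q_i(\eta)^*H_{-1,z,0}(\Delta_g)Q_j(\eta)$ as the sum of a term involving the incoming resolvent (whose kernel is now favourably controlled by hypothesis (2.ii) or (3.ii)) and a spectral measure term at energy $\sqrt{z}$ multiplied by $\phi(1)=1$, the latter bounded in $\mathcal{C}_q$ by $Cz^{-1+\frac{n}{2p}}\|W_1\|_{L^{2p}}\|W_2\|_{L^{2p}}$ directly from Theorem~\ref{prop:spectral measure Schatten estimate}. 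In the remaining cases (2.i)/(3.i) --- and the incoming variants arising from the reduction above --- the resolvent kernel $Q_i(\eta)^*(\Delta_g-(z\pm i0))^{-1}Q_j(\eta)$ is either smooth and of order $x^\infty(x')^\infty z^{-\infty}$ in the high-energy regime, yielding an $O(z^{-\infty})$ contribution that trivially beats the required estimate, or, in the low-energy regime, is bounded pointwise by $(xx')^{(n-1)/2}(\chi(x/\lambda)+\chi(x'/\lambda))/(x+x'+\lambda)$, which fits directly into Lemma~\ref{lem:K} and gives a Hilbert--Schmidt, and hence $\mathcal{C}_q$, bound of the correct form.

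The main technical obstacle will be ensuring that the spectral cutoff $\phi(\Delta_g/z)$ appearing in $H_{-1,z,0}(\Delta_g)$ does not spoil the favourable kernel bounds in cases (2.i)/(2.ii)/(3.i)/(3.ii): since $\phi(\Delta_g/z)$ does not commute with $Q_i(\eta)^*$, one cannot merely absorb it. The way around this is to either run an analytic-family argument in the spirit of Proposition~\ref{prop:Hsz}, with pointwise bounds on the line $\Re s=-\frac{(n+1)}{2}$ now deduced from the good kernels of the unlocalized resolvent in these cases, or to subtract off the complementary piece $Q_i(\eta)^*(1-\phi)(\Delta_g/z)(\Delta_g-(z+i0))^{-1}Q_j(\eta)$, whose Schatten bound is encoded in the microlocal structure analysed in Appendix~\ref{app_2} together with the elliptic-type estimates of Propositions~\ref{prop:resolvent-near-spectrum-away-from-singularity1} and \ref{prop:resolvent-near-spectrum-away-from-singularity1_n/2}. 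Either way, the step lives inside the scattering pseudodifferential calculus used to construct the $Q_i(\eta)$ in \cite{GHS}, \cite{GuHa}, \cite{HaZh}, and the non-commutation of $\phi(\Delta_g/z)$ with $Q_i(\eta)^*$ is the most delicate point of the argument.
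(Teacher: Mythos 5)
Your decomposition into microlocalized pieces, your identification of cases (2.iii)/(3.iii) with Proposition~\ref{prop_cases_2_iii_3_iii}, your use of Stone's formula for cases (2.ii)/(3.ii), and your invocation of the kernel bounds plus Lemma~\ref{lem:K} for cases (3.i)/(3.ii) all match the paper's argument exactly, and you are right to flag the non-commutativity of $\phi(\Delta_g/z)$ with $Q_i(\eta)$ as the main technical point. However, the two resolutions you sketch for that point do not actually close the gap. Route (a) — an analytic family argument in the spirit of Proposition~\ref{prop:Hsz} ``with pointwise bounds on the line $\Re s = -\frac{n+1}{2}$ deduced from the good kernels of the unlocalized resolvent'' — does not go through, because knowing a favourable kernel bound for $Q_i(\eta)^*(\Delta_g-(z\pm i0))^{-1}Q_j(\eta)$ tells you nothing directly about $Q_i(\eta)^*\phi(\Delta_g/z)(\Delta_g-(z+i0))^{s}Q_j(\eta)$ at non-integer $s$; Proposition~\ref{prop:Hsz} crucially uses the Fourier-integral representation of the \emph{spectral measure} (estimates \eqref{eq_10_1_partition}--\eqref{eq_10_4_partition}), which are exactly what fail in cases (2.i)/(2.ii)/(3.i)/(3.ii). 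Route (b) — subtracting $Q_i(\eta)^*(1-\phi)(\Delta_g/z)(\Delta_g-(z+i0))^{-1}Q_j(\eta)$ and invoking Propositions~\ref{prop:resolvent-near-spectrum-away-from-singularity1}/\ref{prop:resolvent-near-spectrum-away-from-singularity1_n/2} — does not directly apply either: those propositions control $W_1(1-\phi)(\Delta_g/z)(\Delta_g-z)^{-1}W_2$ in Schatten norm, but once the $Q$'s are inserted you have $W_1 Q_i(\eta)^*\,[\cdots]\,Q_j(\eta) W_2$, and since $W_k$ and $Q_i(\eta)$ do not commute you cannot pull the $Q$-factors out by Schatten-class H\"older to reduce to the un-sandwiched estimate.

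What the paper actually does is establish Corollary~\ref{cor:spec-localized-est}: the pointwise kernel bounds \eqref{1stcase}, \eqref{2ndcase}, \eqref{1stcaselow} and \eqref{2ndcaselow} hold verbatim for the spectrally localized resolvent $\phi(\Delta_g/z)(\Delta_g-(z\pm i0))^{-1}$ in place of the bare resolvent. This rests on Proposition~\ref{prop_app_B_2} in Appendix~\ref{app_2}, which shows by direct microlocal analysis (using the (semiclassical) scattering and $b$-calculi and the Legendrian structure of the resolvent/spectral measure) that $\phi(\Delta_g/z)$ is a pseudodifferential operator of the right type, microlocally equal to the identity where it matters, and that composing it with the resolvent does not alter the wavefront set or worsen the vanishing orders at the boundary hypersurfaces of $M^2_{k,b}$. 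With Corollary~\ref{cor:spec-localized-est} in hand, your treatment of cases (2.i), (2.ii), (3.i), (3.ii) goes through exactly as you describe: the cutoff can be kept in place and the same kernel bounds and Lemma~\ref{lem:K} apply. So the gap in your proposal is precisely the absence of (a citation to, or a proof of) Corollary~\ref{cor:spec-localized-est}; your two suggested workarounds do not substitute for it.
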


\begin{proof}
Let us first take the high energy case $z \geq 1$ and let $\eta\ge 1$ be such that  $\sqrt{z}\in [(1-\delta/2)\eta,(1+\delta/2)\eta]$.  We decompose the spectrally localized outgoing resolvent $\phi(\frac{\Delta_g}{z})(\Delta_g - (z+i0))^{-1}$ into microlocalized pieces
\[ 
W_1 \phi\bigg(\frac{\Delta_g}{z}\bigg) (\Delta_g-(z+i0))^{-1}W_2= \sum_{i,j=1}^{N_h} W_1Q_i(\eta)^* \phi\bigg(\frac{\Delta_g}{z}\bigg) (\Delta_g-(z+i0))^{-1}Q_j(\eta)W_2.
\]
The bound \eqref{eq_zreel_res} will follow if we show that for all $(i,j)$,  we have 
\begin{equation}
\label{eq_zreel_res_i_j}
\bigg\|W_1 Q_i(\eta)^* \phi\bigg(\frac{\Delta_g}{z}\bigg) (\Delta_g-(z+i0))^{-1} Q_j(\eta)W_2\bigg\|_{\mc{C}_{q}}\leq Cz^{-1+\frac{n}{2p}}\|W_1\|_{L^{2p}(M)} \|W_2\|_{L^{2p}(M)}.
\end{equation}
To that end, the pairs $(i,j)$ will be divided into three cases as in Proposition~\ref{prop:partition of unity}.   

In the first case, (2.i),  in view of \eqref{1stcase} and Corollary~\ref{cor:spec-localized-est}, we know that  the Schwartz kernel of  the operator $Q_i(\eta)^* \phi(\frac{\Delta_g}{z}) (\Delta_g-z-i0)^{-1}Q_j(\eta)$ is $\mc{O}(z^{-N})$ in $L^{2p'}(M\x M)$ with $1/p'+1/p=1$. Using this together with the fact that  $q\ge 2$ and H\"older's inequality, we get 
\begin{align*}
\bigg\|W_1Q_i(\eta)^*\phi\bigg(\frac{\Delta_g}{z}\bigg)& (\Delta_g-(z+i0))^{-1}Q_j(\eta)W_2\bigg\|_{\mc{C}_q}\\
&\le 
\bigg\|W_1Q_i(\eta)^*\phi\bigg(\frac{\Delta_g}{z}\bigg)(\Delta_g-(z+i0))^{-1}Q_j(\eta)W_2\bigg\|_{\mc{C}_2}\\
&\le \mathcal{O}(z^{-N})\|W_1\|_{L^{2p}(M)}\|W_2\|_{L^{2p}(M)},
\end{align*}
for any $N\in \N$, showing  \eqref{eq_zreel_res_i_j}.

In the second case, (2.ii), using Stone's formula, we write 
\begin{equation}
\label{eq_zreel_res_2_iii}
\begin{aligned}
W_1 Q_i(\eta)^* \phi\bigg(\frac{\Delta_g}{z}\bigg) (\Delta_g &- (z+i0))^{-1} Q_j(\eta) W_2 \\
&= W_1 Q_i(\eta)^* \phi\bigg(\frac{\Delta_g}{z}\bigg) (\Delta_g - (z-i0))^{-1} Q_j(\eta) W_2 \\ 
&+ \frac{\pi i}{\lambda} W_1 Q_i(\eta)^* dE_{\sqrt{\Delta_g}}(\lambda)  Q_j(\eta) W_2, \quad \lambda = \sqrt{z}.
\end{aligned}
\end{equation}
Then the estimate for the term involving the incoming resolvent in \eqref{eq_zreel_res_2_iii} follows exactly as in case (2.i).  On the other hand, 
 we have already proved the corresponding estimate  \eqref{spec-meas-bound-QiQj} for the spectral measure, which leads to the estimate \eqref{eq_zreel_res_i_j} in this case. 

In the third case, (2.iii), we get 
\begin{equation}
\label{eq_zreel_res_1}
\begin{aligned}
W_1 Q_i(\eta)^* \phi\bigg(\frac{\Delta_g}{z}\bigg) (\Delta_g - (z+i0))^{-1} Q_j(\eta) W_2 =W_1 Q_i(\eta)^* H_{-1, z,0}(\Delta_g) Q_j(\eta)W_2,
\end{aligned}
\end{equation}
where the operator $Q_i(\eta)^* H_{-1, z,0}(\Delta_g) Q_j(\eta)$ is defined in \eqref{eq_def_H_s_z_with_Q_eps=0}. The required estimate for this term  therefore is a consequence of Proposition~\ref{prop_cases_2_iii_3_iii}.  

In the low energy case, $0 < z \leq 1$,  the argument is similar. In cases (3.i) and (3.ii) we use Corollary~\ref{cor:spec-localized-est} together with Lemma~\ref{lem:K} and the bound \eqref{spec-meas-bound-QiQj} for the spectral measure to deduce the Schatten norm estimate. In case (3.iii), the argument is the same as for case (2.iii). 
This concludes the proof of the proposition. 
\end{proof}

\section{Bounds on individual eigenvalues. Proof of Theorem \ref{thm_indiviual_asymptotically conic}} 

\label{sec_4_individual}

In this section we shall follow some of the arguments of  \cite{Frank_2015} and \cite{Frank_Simon}, making some necessary changes due to the fact that we are no longer in the Euclidean setting.  

Let us recall that $n=\text{dim}(M)\ge 3$.  We have the following result which is a generalization of  \cite[Lemma 4.2]{Frank_2015}   to the case of the Laplace  operator on asymptotically conic manifolds.  
\begin{prop}
\label{prop_comp_Frank}
Let $V\in L^p(M)$ with $\frac{n}{2}\le p<\infty$.  The operator  $\sqrt{|V|}(\Delta_g+1)^{-\frac{1}{2}}$ is compact on $L^2(M)$. 
\end{prop}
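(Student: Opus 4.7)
The strategy is approximation by compactly supported smooth potentials. The key reduction is to the a priori bound
\begin{equation}
\label{eq_prop_comp_plan_apriori}
\|\sqrt{|V|}(\Delta_g+1)^{-1/2}\|_{L^2(M)\to L^2(M)} \le C\|V\|_{L^p(M)}^{1/2}, \quad V\in L^p(M),\ p\ge n/2.
\end{equation}
Granting this, pick $V_k\in C_c^\infty(M)$ with $V_k \to V$ in $L^p(M)$, which is possible since $C_c^\infty$ is dense in $L^p$ for $1\le p <\infty$. The elementary pointwise inequality $|\sqrt{|a|}-\sqrt{|b|}|\le \sqrt{|a-b|}$ for $a,b\in\CC$, applied together with \eqref{eq_prop_comp_plan_apriori} to the difference $V-V_k$, yields
\[
\|(\sqrt{|V|}-\sqrt{|V_k|})(\Delta_g+1)^{-1/2}\|_{L^2\to L^2}\le C\|V-V_k\|_{L^p(M)}^{1/2}\to 0.
\]
Since norm limits of compact operators are compact, it is then enough to handle $V \in C_c^\infty(M)$.

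For such $V$, I factor $\sqrt{|V|}(\Delta_g+1)^{-1/2} = \sqrt{|V|}\cdot\chi\cdot(\Delta_g+1)^{-1/2}$, where $\chi\in C_c^\infty(M)$ equals $1$ on $\supp(V)$. The operator $\chi(\Delta_g+1)^{-1/2}$ is bounded from $L^2(M)$ into the space of $H^1$ functions supported in a fixed compact set $K = \supp(\chi)$, and the inclusion of this space into $L^2(K)$ is compact by a local Rellich--Kondrachov argument (which is a purely local statement and so is unaffected by the non-compactness of $M$). Multiplication by $\sqrt{|V|}\in L^\infty(M)$ is bounded on $L^2$, giving compactness.

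It remains to establish \eqref{eq_prop_comp_plan_apriori}. Squaring and dualizing, the estimate is equivalent to
\[
\int_M |V|\,|(\Delta_g+1)^{-1/2}f|^2\, dV_g \le C \|V\|_{L^p(M)}\|f\|_{L^2(M)}^2,
\]
which by H\"older's inequality reduces to proving that $(\Delta_g+1)^{-1/2}$ maps $L^2(M)$ boundedly into $L^{2p'}(M)$, where $p'=p/(p-1)$. In the critical endpoint $p=n/2$ this is the Sobolev-type embedding $(\Delta_g+1)^{-1/2}\colon L^2(M)\to L^{2n/(n-2)}(M)$, which on an asymptotically conic manifold follows from the Varopoulos heat kernel bound $\|e^{-t\Delta_g}\|_{L^1\to L^\infty}\le C t^{-n/2}$ (the same estimate already invoked in the proof of Proposition~\ref{negativez}) via the subordination identity $(\Delta_g+1)^{-1/2}=\Gamma(1/2)^{-1}\int_0^\infty t^{-1/2}e^{-t(\Delta_g+1)}\,dt$ and interpolation between $L^2\to L^2$ and $L^2\to L^\infty$ bounds on $e^{-t(\Delta_g+1)}$. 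For $p>n/2$ the required embedding is weaker and follows by H\"older interpolation between $L^2$ and $L^{2n/(n-2)}$.

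The main technical point is therefore the Sobolev embedding at the endpoint $p=n/2$; however this is essentially already in the paper's toolkit, since the heat kernel bound of Varopoulos is used elsewhere (e.g.\ in the proof of Proposition~\ref{negativez}). The rest of the argument is soft functional analysis.
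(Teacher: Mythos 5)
Your overall strategy — an a priori bound $\|W(\Delta_g+1)^{-1/2}\|_{L^2\to L^2}\lesssim\|W\|_{L^{2p}}$ via H\"older and a Sobolev-type embedding, then approximation by compactly supported potentials and local Rellich compactness — is exactly the structure of the paper's proof, and your approximation step (approximating $V$ in $L^p$ and using $|\sqrt{|a|}-\sqrt{|b|}|\le\sqrt{|a-b|}$) is a fine variant of the paper's (which instead approximates $\sqrt{|V|}$ directly in $L^{2p}$). The dualization and H\"older reduction to $(\Delta_g+1)^{-1/2}\colon L^2(M)\to L^{2p'}(M)$ is also correct.

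However, there is a genuine gap in your proposed proof of the endpoint embedding $(\Delta_g+1)^{-1/2}\colon L^2(M)\to L^{2n/(n-2)}(M)$ by ``subordination plus interpolation.'' The Varopoulos bound gives $\|e^{-t\Delta_g}\|_{L^2\to L^\infty}\le Ct^{-n/4}$, and interpolating with $\|e^{-t(\Delta_g+1)}\|_{L^2\to L^2}\le e^{-t}$ with parameter $\theta$ satisfying $\tfrac1q=\tfrac{1-\theta}{2}$ yields $\|e^{-t(\Delta_g+1)}\|_{L^2\to L^q}\lesssim t^{-n\theta/4}e^{-t}$. Plugging this into the subordination formula gives
\[
\int_0^\infty t^{-1/2}\, t^{-n\theta/4}\,e^{-t}\,dt,
\]
which converges if and only if $\theta<2/n$, i.e.\ $q<\tfrac{2n}{n-2}$. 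At the critical exponent $q=\tfrac{2n}{n-2}$ one has $\theta=2/n$ and the integrand is $t^{-1}e^{-t}$, giving a logarithmic divergence at $t=0$; the $+1$ in $\Delta_g+1$ only helps at large $t$ and does not cure this. So the argument you sketch proves the embedding only for $p>n/2$ and fails precisely at $p=n/2$, which is the case you single out as the key technical point. The endpoint Sobolev embedding \emph{is} true — it is equivalent to ultracontractivity by Varopoulos's theorem — but the proof of that equivalence is more delicate (Moser/Nash iteration or a weak-type endpoint argument), not a one-line interpolation. The paper sidesteps this by directly invoking the Sobolev embedding $H^1(M)\subset L^{2n/(n-2)}(M)$ from \cite[Proposition 2.1]{GuHa}, combined with boundedness of $(\Delta_g+1)^{-1/2}\colon L^2(M)\to H^1(M)$. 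To repair your argument you should either cite that embedding directly, or invoke Varopoulos's theorem itself (rather than the naive subordination computation) as the bridge from the heat kernel bound to the critical Sobolev inequality.
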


\begin{proof}
We follow \cite[Lemma 4.2]{Frank_2015}. First  we shall show that 
\begin{equation}
\label{eq_indiv_prop_1}
\|W(\Delta_g+1)^{-\frac{1}{2}}\|_{\mathcal{L}(L^2(M), L^2(M))}\le C\| W\|_{L^{2p}(M)},\quad W\in L^{2p}(M).
\end{equation}
Indeed, we have 
\begin{equation}
\label{eq_indiv_prop_1_0_1}
(\Delta_g+1)^{-\frac{1}{2}}: L^2(M)\to H^{1}(M),
\end{equation}
is bounded, and therefore, by Sobolev's embedding $H^{1}(M)\subset L^{\frac{2n}{n-2}}(M)$, which is valid on an asymptotically conic manifold of dimension $n\ge 3$, see \cite[Proposition 2.1]{GuHa},  we get 
\begin{equation}
\label{eq_indiv_prop_1_0_2}
(\Delta_g+1)^{-\frac{1}{2}}: L^2(M)\to L^{\frac{2n}{n-2}}(M)
\end{equation}
 is also  bounded.  Using H\"older's inequality,  the logarithmic convexity of $L^p$ norms, and \eqref{eq_indiv_prop_1_0_1}, \eqref{eq_indiv_prop_1_0_2}, we obtain that 
\begin{align*}
\|W& (\Delta_g+1)^{-\frac{1}{2}} f\|_{L^2(M)}\le \|W\|_{L^{2p}(M)} \|(\Delta_g+1)^{-\frac{1}{2}} f\|_{L^{\frac{2p}{p -1}}(M)}\\
&\le \|W\|_{L^{2p}(M)} \|(\Delta_g+1)^{-\frac{1}{2}} f\|_{L^2(M)}^{1-\frac{n}{2p}}  \|(\Delta_g+1)^{-\frac{1}{2}} f\|_{L^{\frac{2n}{n-2}}(M)}^{\frac{n}{2p}}\\
&\le 
C  \|W\|_{L^{2p}(M)}\|f\|_{L^2(M)},
\end{align*}
showing \eqref{eq_indiv_prop_1}. 

Let $W_j\in C_0^\infty(M)$ be such that $W_j\to \sqrt{|V|}$ in $L^{2p}(M)$. By Rellich's compactness theorem, the operator $W_j(\Delta_g+1)^{-\frac{1}{2}}$ is compact on $L^2(M)$,  and it follows from \eqref{eq_indiv_prop_1} that 
$W_j(\Delta_g+1)^{-\frac{1}{2}}\to \sqrt{|V|}(\Delta_g+1)^{-\frac{1}{2}}$ in $\mathcal{L}(L^2(M), L^2(M))$. The proof is complete. 
\end{proof}

Setting
\[
\sqrt{V(x)}=\begin{cases}\frac{V(x)}{\sqrt{|V(x)|}},  & V(x)\ne 0,\\ 
0, & V(x)=0,
\end{cases}
\]
and combining Proposition \ref{prop_comp_Frank} with  \cite[Lemma B.1]{Frank_2015}, we get that the quadratic form 
\[
\| (\Delta_g)^{1/2}u\|^2_{L^2(M)}+(\sqrt{V}u, \sqrt{|V|}u)_{L^2(M)}, 
\]
equipped with the domain $H^{1}(M)$,  is closed and sectorial. Associated to the quadratic form is an $m$--sectorial operator 
with domain $\subset H^{1}(M)$,  which we shall denote by $\Delta_g+V$. The spectrum of $\Delta_g+V$ in $\C\setminus [0,\infty)$ consists of isolated eigenvalues of finite algebraic multiplicity, see \cite[Proposition B. 2]{Frank_2015}.

Now interpolating between the estimate, valid for $z\in \C\setminus[0,\infty)$, 
\[
\|(\Delta_g -z)^{-1}\|_{L^2(M)\to L^2(M)}=\frac{1}{d(z)},
\]
and the uniform estimate  \eqref{eq_KRS_manifold}, with $p=\frac{2(n+1)}{n+3}$, we obtain the following result. 
\begin{cor}
\label{cor_Guillarmou_Hassell_2014}
Let $(M,g)$ be an asymptotically conic non-trapping manifold of dimension $n\ge 3$. Then for all $p\in [\frac{2(n+1)}{n+3},2]$, there is a constant $C>0$ such that for all $z\in \C\setminus[0,\infty)$,  
\begin{equation}
\label{eq_KRS_manifold_non-uniform}
\| (\Delta_g -z)^{-1}\|_{L^{p}(M)\to L^{p'}(M)}\le Cd(z)^{(n+1)(\frac{1}{p}-\frac{1}{2})-1}|z|^{\frac{1}{2}-\frac{1}{p}}.
\end{equation}
\end{cor}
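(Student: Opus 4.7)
The plan is to obtain \eqref{eq_KRS_manifold_non-uniform} by a direct Riesz--Thorin interpolation between the two endpoint bounds, $p=2$ and $p_0 := \frac{2(n+1)}{n+3}$. At $p=2$, the self-adjointness of $\Delta_g$ combined with $\mathrm{Spec}(\Delta_g) = [0,\infty)$ yields the spectral-theoretic identity
$$
\|(\Delta_g - z)^{-1}\|_{L^2(M) \to L^2(M)} = \frac{1}{d(z)}, \quad z \in \C \setminus [0,\infty).
$$
At $p=p_0$, the hypothesis $p_0 \in [\frac{2n}{n+2}, \frac{2(n+1)}{n+3}]$ of Theorem~\ref{thm_GuHa} is met, and a short computation gives $n\big(\tfrac{1}{p_0} - \tfrac12\big) - 1 = -\tfrac{1}{n+1}$, so
$$
\|(\Delta_g - z)^{-1}\|_{L^{p_0}(M) \to L^{p_0'}(M)} \le C |z|^{-1/(n+1)}.
$$

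For $p \in [p_0, 2]$, I would write $\tfrac{1}{p} = \tfrac{1-\theta}{p_0} + \tfrac{\theta}{2}$ with $\theta \in [0,1]$, which solves to
$$
\theta = \tfrac{n+3}{2} - (n+1)\tfrac{1}{p}, \qquad 1-\theta = (n+1)\bigl(\tfrac{1}{p} - \tfrac12\bigr).
$$
Since $\tfrac{1}{p'} = 1 - \tfrac{1}{p}$, the dual relation $\tfrac{1}{p'} = \tfrac{1-\theta}{p_0'} + \tfrac{\theta}{2}$ automatically holds with the same $\theta$. Thus Riesz--Thorin applied to the single linear operator $T = (\Delta_g - z)^{-1}$, viewed as a consistent bounded map on the compatible endpoint couples $(L^{p_0}, L^{p_0'})$ and $(L^2, L^2)$, delivers
$$
\|(\Delta_g - z)^{-1}\|_{L^p \to L^{p'}} \le \bigl(C |z|^{-1/(n+1)}\bigr)^{1-\theta} \bigl(d(z)^{-1}\bigr)^{\theta}.
$$
Plugging in the values of $\theta$ and $1-\theta$ gives the $|z|$-exponent
$$-\tfrac{1-\theta}{n+1} = -\bigl(\tfrac{1}{p}-\tfrac12\bigr) = \tfrac12 - \tfrac{1}{p}$$
and the $d(z)$-exponent
$$-\theta = \tfrac{n+1}{p} - \tfrac{n+3}{2} = (n+1)\bigl(\tfrac{1}{p} - \tfrac12\bigr) - 1,$$
matching \eqref{eq_KRS_manifold_non-uniform} exactly.

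There is no real obstacle here: the argument is essentially exponent bookkeeping for a complex interpolation between a uniform-in-$z$ bound and a spectral-gap bound. The only minor point to verify is the consistency of the two endpoint realizations of $(\Delta_g - z)^{-1}$ on a dense common subspace (such as $C_0^\infty(M) \subset L^{p_0} \cap L^2$), which is immediate from the functional-calculus definition of the resolvent for $z \in \C \setminus [0,\infty)$. I would therefore expect this corollary to take only a few lines to record in the paper.
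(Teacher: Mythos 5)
Your proof is correct and follows exactly the approach the paper uses: interpolate between the spectral-theoretic $L^2\to L^2$ bound $d(z)^{-1}$ and the uniform Kenig--Ruiz--Sogge estimate of Theorem~\ref{thm_GuHa} at the endpoint $p_0 = \frac{2(n+1)}{n+3}$, where the exponent evaluates to $-\frac{1}{n+1}$. The paper states this in one sentence without carrying out the exponent bookkeeping, which you have verified correctly.
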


We shall now proceed to prove Theorem \ref{thm_indiviual_asymptotically conic}. In doing so we  shall follow \cite[Theorem 3.2]{Frank_Simon}.  Let $\lambda\in \C$ be an eigenvalue and $\psi\in H^{1}(M)$ be the corresponding eigenfunction of $\Delta_g+V$, 
\[
(\Delta_g+V)\psi=\lambda\psi.
\]

\textbf{(i)}  Let $0< \gamma\le \frac{1}{2}$.  Assume first that  $\lambda\in \C\setminus[0,\infty)$. Let us choose  $p>1$  such that  
\begin{equation}
\label{eq_8_1}
\gamma+\frac{n}{2}=\frac{p}{2-p}, 
\end{equation} 
and notice that then  $\frac{2n}{n+2}< p\le \frac{2(n+1)}{n+3}$ and $\frac{2(n+1)}{n-1}\le p'< \frac{2n}{n-2}$.  

By Sobolev's embedding, we have $\psi\in L^{\frac{2n}{n-2}}(M)$, and thus,  $\psi\in L^r(M)$ for $r\in [2,\frac{2n}{n-2}]$, by interpolation. In particular, $\psi\in L^{p'}(M)$, and by H\"older's inequality, we get 
\[
\|V\psi\|_{L^p(M)}\le \|V\|_{L^{\frac{p}{2-p}}(M)}\|\psi\|_{L^{p'}(M)}= \|V\|_{L^{\gamma+\frac{n}{2}}(M)}\|\psi\|_{L^{p'}(M)}.
\] 
We have
\[
\psi=(\Delta_g-\lambda)^{-1}(\Delta_g-\lambda)\psi=-(\Delta_g-\lambda)^{-1}(V\psi).
\]
Hence, using \eqref{eq_KRS_manifold}, we get 
\begin{equation}
\label{eq_8_1_1}
\begin{aligned}
\|\psi\|_{L^{p'}(M)}&\le \|(\Delta_g-\lambda)^{-1} \|_{L^p(M)\to L^{p'}(M)}\|V\psi\|_{L^p(\R^n)}\\
&\le C|\lambda|^{\frac{n}{2}(\frac{2}{p}-1)-1}\|V\|_{L^{\gamma+\frac{n}{2}}(M)}\|\psi\|_{L^{p'}(M)},
\end{aligned}
\end{equation}
which implies \eqref{eq_8_0} in view of 
\[
\frac{n}{2}\bigg(\frac{2}{p}-1\bigg)-1=-\frac{\gamma}{\gamma+\frac{n}{2}}. 
\]

Assume now that  $\lambda\in (0,\infty)$. Then 
for $\varepsilon>0$, we set 
\[
\psi_\varepsilon=(\Delta_g-\lambda-i\varepsilon)^{-1}(\Delta_g-\lambda)\psi=f_\varepsilon (\Delta_g)\psi,
\]
where 
\[
f_\varepsilon(t)=\frac{t-\lambda}{t-\lambda-i\varepsilon}, \quad t\in \R. 
\]
By the spectral theorem, we have
\[
\|\psi_\varepsilon-\psi\|_{L^2(M)}^2=\|f_\varepsilon (\Delta_g)\psi-\psi\|_{L^2(M)}^2=\int |f_\varepsilon(t)-1|^2d (E_{\Delta_g}(t)\psi, \psi)_{L^2(M)},
\]
where $d E_{\Delta_g}(t)$ is the spectral measure of $\Delta_g$. Using the dominated convergence theorem together with the fact that $f_\varepsilon(t)\to 1$ as $\varepsilon\to 0$ for all $t\ne \lambda$, and that $E_\lambda=0$ as $\lambda$ is not an eigenvalue of $\Delta_g$, we conclude that $\psi_\varepsilon\to \psi$ in $L^2(M)$.  

On the other hand, we have 
\[
\psi_\varepsilon=-(\Delta_g-\lambda-i\varepsilon)^{-1}(V\psi).
\]
Choosing $p>1$ satisfying \eqref{eq_8_1} and using \eqref{eq_KRS_manifold}, we obtain that 
\begin{equation}
\label{eq_8_2}
\|\psi_\varepsilon\|_{L^{p'}(M)}\le C|\lambda|^{\frac{n}{2}(\frac{2}{p}-1)-1}\|V\|_{L^{\gamma+\frac{n}{2}}(M)}\|\psi\|_{L^{p'}(M)},
\end{equation}
i.e. $\psi_\varepsilon$ is uniformly bounded in $L^{p'}(M)$. Passing to a subsequence, we may assume that there exists $\tilde \psi\in L^{p'}(M)$ such that $\psi_{\varepsilon}\to \tilde \psi$ in the weak $*$ topology of $L^{p'}(M)$.  It follows that  $ \psi=\tilde \psi\in L^{p'}(M)$.  By the lower semi-continuity of the norm and \eqref{eq_8_2}, we get 
\begin{equation}
\label{eq_8_2_1}
\|\psi\|_{L^{p'}(M)}\le \liminf_{\varepsilon\to 0}\|\psi_{\varepsilon}\|_{L^{p'}(M)}  \le  C|\lambda|^{\frac{n}{2}(\frac{2}{p}-1)-1}\|V\|_{L^{\gamma+\frac{n}{2}}(M)}\|\psi\|_{L^{p'}(M)},
\end{equation}
which shows \eqref{eq_8_0} when $\lambda\in (0,\infty)$.

\textbf{(ii)} Let $V\in L^{\frac{n}{2}}(M)$. Setting $p=\frac{2n}{n+2}$, and arguing as in the case (i) above, for $\lambda\in \C\setminus\{0\}$, we obtain that 
\[
\|\psi\|_{L^{p'}(M)}\le C \|V\|_{L^{\frac{n}{2}}(M)}\|\psi\|_{L^{p'}(M)}.
\]
The case $\lambda=0$ is handled similarly using that 
\[
\|(\Delta_g-i\varepsilon)^{-1}\|_{L^p(M)\to L^{p'}(M)}\le \mathcal{O}(1), 
\]
in view of \eqref{eq_KRS_manifold}. The claim (ii) follows. 

 \textbf{(iii)} Let $\gamma> \frac{1}{2}$, and let $\lambda\in \C\setminus[0,\infty)$ be an eigenvalue of $\Delta_g+V$, and $\psi\in H^{1}(M)$ be the corresponding eigenfunction.  Choosing  $p>1$ satisfying \eqref{eq_8_1}, we have
 $\frac{2(n+1)}{n+3}<p<2$ and  $2<p'<\frac{2(n+1)}{n-1}$. Using that $\psi\in L^{p'}(M)$ and \eqref{eq_KRS_manifold_non-uniform}, similarly to above, we obtain that 
 \begin{align*}
\|\psi\|_{L^{p'}(M)}&\le \|(\Delta_g-\lambda)^{-1} \|_{L^p(M)\to L^{p'}(M)}\|V\psi\|_{L^p(M)}\\
&\le C\delta(\lambda)^{(n+1)(\frac{1}{p}-\frac{1}{2})-1}  |\lambda|^{\frac{1}{2}-\frac{1}{p}}\|V\|_{L^{\gamma+\frac{n}{2}}(M)}\|\psi\|_{L^{p'}(M)}, 
\end{align*}
which implies \eqref{eq_8_0_iii}  in view of the fact that  $\frac{1}{p}=\frac{1+\gamma+\frac{n}{2}}{2(\gamma+\frac{n}{2})}$. This completes the proof of Theorem \ref{thm_indiviual_asymptotically conic}.

\section{Bounds on sums of eigenvalues for Schr\"odinger operators with complex potentials} 

\label{sec_bounds_sum_higher}

\subsection{Short range potentials. Proof of Theorem \ref{thm_main_sums_asymp}} 
Let $V\in L^p(M)$, $\frac{n}{2}\le p\le \frac{n+1}{2}$, and let $q=\frac{p(n-1)}{n-p}$. Then Theorem \ref{thm_resolvent_Schatten_laplacian} implies that for $z\in \C\setminus[0,\infty)$, we have $ \sqrt{V}(\Delta_g-z)^{-1}\sqrt{|V|}\in \mathcal{C}_{q}(L^2(M))$ and
\begin{equation}
\label{eq_100_1}
\| \sqrt{V}(\Delta_g-z)^{-1}\sqrt{|V|}\|_{\mathcal{C}_{q}(L^2(M))}\le C|z|^{-1+\frac{n}{2p}} \|V\|_{L^{p}(M)}. 
\end{equation}

We claim that the map 
\begin{equation}
\label{eq_100_10}
\C\setminus [0,\infty) \ni z\mapsto  \sqrt{V}(\Delta_g-z)^{-1}\sqrt{|V|}
\end{equation}
is holomorphic with values in $\mathcal{C}_{q}(L^2(M))$. First let us check that \eqref{eq_100_10} is holomorphic with values in 
$\mathcal{L}(L^2(M), L^2(M))$.  Indeed, letting $z_0\in \C\setminus [0,\infty)$, we write 
\begin{equation}
\label{eq_100_10_new_t}
\sqrt{V}(\Delta_g-z)^{-1} \sqrt{|V|} =\sqrt{V}\sum_{j=0}^\infty (z-z_0)^j(\Delta_g-z_0)^{-j-1}  \sqrt{|V|}
\end{equation}
and notice that 
\begin{align*}
\|\sqrt{V}(\Delta_g-z_0)^{-j-1}  \sqrt{|V|}\|_{\mathcal{L}(L^2(M), L^2(M))}\le \|\sqrt{V}(\Delta_g-z_0)^{-1}\|_{\mathcal{L}(L^2(M), L^2(M))}\\
 \|  (-\Delta-z_0)^{-1} \sqrt{|V|}\|_{\mathcal{L}(L^2(M), L^2(M))}\|(\Delta_g-z_0)^{-1}\|^{j-1}_{\mathcal{L}(L^2(M), L^2(M))}\le C^{j+1},
\end{align*}
for some $C>0$. Here we have used that the operators $\sqrt{V}(-\Delta-z_0)^{-1}$, $ (\Delta_g-z_0)^{-1} \sqrt{|V|}$ are bounded on $L^2(M)$ as seen by arguing as in the proof of \eqref{eq_indiv_prop_1}. This shows that the series \eqref{eq_100_10_new_t}
 converges in $\mathcal{L}(L^2(M), L^2(M))$ for $|z-z_0|$ small, and therefore, the map \eqref{eq_100_10} is holomorphic with values in $\mathcal{L}(L^2(M), L^2(M))$. In particular, if $T\in \mathcal{C}_1(L^2(M))$, i.e.  of trace class, the map 
 \begin{equation}
\label{eq_100_10_new_t_2}
\C\setminus [0,\infty) \ni z\mapsto  \langle \sqrt{V}(\Delta_g-z)^{-1}\sqrt{|V|}, T\rangle  
 \end{equation}
is holomorphic. Using the density of $\mathcal{C}_1(L^2(M))$ in $\mathcal{C}_{q'}(L^2(M))$, the bound \eqref{eq_100_1}, and  H\"older's inequality in Schatten classes, we conclude that the map \eqref{eq_100_10_new_t_2} is holomorphic for all $T\in \mathcal{C}_{q'}(L^2(M))$, establishing the claim.

Consider the holomorphic function 
\[
h(z):=\text{det}_{ \ceil{q}}(1+ \sqrt{V}(\Delta_g-z)^{-1}\sqrt{|V|}), \quad z\in \C\setminus [0,\infty),
\] 
where $ \ceil{q}$ is the smallest integer  $\ge q$, and $\text{det}_{ \ceil {q}}$ is the regularized determinant, see \cite[Chapter 9]{Simon_trace}. As explained in  \cite[proof of Theorem 16]{Frank_Sabin}, using \eqref{eq_100_1}, we get 
 \begin{equation}
\label{eq_11_1}
\log|h(z)|\le C\big\| \sqrt{V}(\Delta_g-z)^{-1}\sqrt{|V|}\big\|_{\mathcal{C}_{q}}^{q}\le C|z|^{(-1+\frac{n}{2p})q} \|V\|^{q}_{L^{p}(M)},
\end{equation}
uniformly in $z\in \C\setminus[0,\infty)$.

Combining Proposition \ref{prop_comp_Frank} and Lemma B.1 of \cite{Frank_2015}, we conclude that the following version of the Birman--Schwinger principle holds: $z\in \C\setminus [0,\infty)$ is an eigenvalue of $\Delta_g+V$ if and only if 
\begin{equation}
\label{eq_11_1_B_Schwinger}
\text{Ker}\,  (1+\sqrt{V}(\Delta_g-z)^{-1}\sqrt{|V|})\ne \{0\}. 
\end{equation}
An application of Lemma 3.2 of  \cite{Frank_2015} gives that \eqref {eq_11_1_B_Schwinger} is equivalent to the fact that $h(z)=0$ and that the order of vanishing of $h$ at $z$ agrees with the algebraic multiplicity of $z$ as an eigenvalue of  $\Delta_g+V$.  

At this point we are exactly in the same situation as in \cite[Theorem 16]{Frank_Sabin}. Here we may remark that the proof of Theorem 16 in  \cite{Frank_Sabin} is based on a result of Borichev, Golinskii and Kupin \cite{Borichev_Golinskii_Kupin}, concerning the distribution of zeros of a holomorphic function in the unit disc, growing rapidly at a boundary point.  The proof of Theorem  \ref{thm_main_sums_asymp} is therefore complete.

\subsection{Long range potentials. Proof of Theorem \ref{thm_sums_long_range_asym}} 

First we have the following result: let $\gamma\ge 1/2$. Then there exists a constant $C>0$ such that for all $W\in L^{2(\gamma+\frac{n}{2})}(M)$ and all $z\in\C\setminus[0,\infty)$,
\begin{equation}
\label{eq_long_Scatt}
\|W(\Delta_g-z)^{-1}W\|_{\mathcal{C}_{2(\gamma+\frac{n}{2})}}\le C d(z)^{-1+\frac{n+1}{2(\gamma+\frac{n}{2})}}|z|^{-\frac{1}{2(\gamma+\frac{n}{2})}}\|W\|^2_{L^{2(\gamma+\frac{n}{2})}(M)}.
\end{equation}
Indeed this follows as in \cite[Proposition 2.1]{Frank_2015} by interpolation between \eqref{toprove} with $p=\frac{n+1}{2}$ and the standard bound
\[
\|W(\Delta_g-z)^{-1}W\|_{L^2(M)\to L^2(M)}\le d(z)^{-1}\|W\|^2_{L^\infty(M)}.
\]
Now an application of \cite[Theorem 3.1]{Frank_2015} to the holomorphic family 
$K(z)= \sqrt{V}(\Delta_g-z)^{-1}\sqrt{|V|}$ completes the proof of Theorem \ref{thm_sums_long_range_asym} exactly in the same way as in   \cite[Theorem 1.2]{Frank_2015}.

\begin{appendix}

\section{Proof of Lemma \ref{Guillarmou_Hassell_remark}}
\label{app}

We shall follow the proof of Lemma 3.3 in \cite{GHS} closely. Let $a<b<c\le 0$ and let $\alpha:=a-c-1<-1$ and $\beta:=b-c-1<-1$.  We shall show the estimate \eqref{eq_10_4_conv} for   $\|(\lambda- i \varepsilon)^{b+it}* f\|_{L^\infty_\lambda}$, as the bound  \eqref{eq_10_4_conv} for $\|(\lambda+ i \varepsilon)^{b+it}* f\|_{L^\infty_\lambda}$ can be proved similarly. 

To that end, let $\chi_-^z$ be the family of distributions on $\R$ holomorphic in $z\in \C$ given by 
\[
\chi_-^z(\lambda)=\frac{\lambda_-^z}{\Gamma(z+1)}, \quad \Re\, z>-1,
\]
where 
\[
\lambda_-^z=\begin{cases} 0 & \text{if}\quad \lambda>0,\\
|\lambda|^z & \text{if}\quad \lambda< 0.
\end{cases}
\]
We have $\chi_-^z(-\lambda)=\chi_+^z(\lambda)$.  Recall from  \cite[Section 3.2)]{Hormander_books_1} that when $\Re\, z>-1$, we have
\begin{equation}
\label{(x-i0)^z} 
(\lambda-i0)^{z}=\lambda_+^z+e^{-i\pi z}\lambda_-^z,
\end{equation}
and from  \cite[Example 7.1.17]{Hormander_books_1} that for $\eps>0$ and $z\in\cc$, we have 
\begin{equation}
\label{fourierx-i0} 
\mc{F}((\lambda-i\eps)^{-z})(\xi)=2\pi e^{iz\pi/2}e^{\eps \xi}\chi_-^{z-1}(\xi),
\end{equation}
and 
\begin{equation}
\label{fourier_chi} 
\mc{F}(\chi_+^z)(\xi)= e^{-i(z+1)\pi/2}(\xi-i0)^{-z-1}.
\end{equation}

Consider the family of operators $A_t$ for $t\in\rr$ given by 
\begin{equation}
\label{eq_def_A_t}
A_t: C_0^\infty(\rr)\to \mc{D}'(\rr), \quad A_tf:=\eta_t* f, 
\end{equation}
where 
\begin{equation}
\label{eq_def_A_t_eta_t_c<0}
\hat{\eta}_t(\xi)= \frac{2\pi e^{i(-\beta-it)\pi/2-i\pi(c+1)}e^{\eps \xi}\xi_-^{-\beta-1-it}}{\Gamma(-b-it)(\sigma+e^{-i(\alpha+1)\pi/2}(\xi-i0)^{-\alpha-1})}, 
\end{equation}
when $c<0$, and 
\begin{equation}
\label{eq_def_A_t_eta_t_c=0}
\hat{\eta}_t(\xi)=\frac{2\pi e^{-i(b-1+it)\pi/2}e^{\eps \xi}\xi_-^{-b-it}}{\Gamma(-b-it)(\sigma-e^{-i\pi a/2}(\xi-i0)^{-a})},
 \end{equation}
 when $c=0$, 
and $\sigma\in \cc$, $|\sigma|=1$ and $\sigma\notin\{ ie^{-i\alpha\pi/2},-ie^{i\alpha\pi/2}, e^{ia\pi/2}\}$. 
In view of  \eqref{(x-i0)^z},  we see that  $\hat{\eta}_t\in \mc{S}'(\rr)$.

We notice that for all $t\in \R$,  $\hat{\eta_t}\in L^1_{\rm loc}(\rr)$. Furthermore, using that $|\frac{1}{\Gamma(-b-it)}|\le Ce^{\pi |t|}$,  we have,  for $|\xi|\ge 1$,
\begin{equation}
\label{eq_eta_l_1_-3}
|\p_{\xi}\hat{\eta_t}(\xi)|\le Ce^{\frac{3\pi|t|}{2} }(1+|t|)|\xi|^{-\beta+\alpha-1},
\end{equation}
and for $|\xi|\le 1$, we get
\begin{equation}
\label{eq_eta_l_1_-2}
|\p_{\xi}\hat{\eta_t}(\xi)|\le Ce^{\frac{3\pi|t|}{2}}(1+|t|)|\xi|^{-\beta-2},
\end{equation}
and therefore, 
\[ 
\pl_\xi \hat{\eta_t}\in L^p(\rr)\cap L^1(\rr,\cjg \xi\cjd^{\delta}d\xi) \textrm{ for  some } p\in(1,2), \, \delta>0.
\]
By Hausdorff--Young's inequality, we see that  $u(\lambda):=\lambda \eta_t(\lambda)\in L^{p'}(\rr)$ with $p'\in (2,\infty)$ being the dual exponent to $p$. We also have 
\begin{equation}
\label{eq_eta_l_1_-1}
\begin{aligned}
|u(\lambda)-u(\lambda')|&\le (2\pi)^{-1}\int |e^{i\xi \lambda}-e^{i\xi\lambda'}||\hat u(\xi)|d\xi\le C \int |\xi|^{\delta}|\lambda-\lambda'|^{\delta}|\hat u(\xi)|d\xi\\
&\le C|\lambda-\lambda'|^{\delta}\|\hat u\|_{L^1(\rr,\cjg \xi\cjd^{\delta}d\xi)},
\end{aligned}
\end{equation}
showing that $u=\lambda \eta_t\in C^{\delta}(\R)$. Thus,  by H\"older inequality, we get 
\begin{equation}
\label{eq_eta_l_1}
\int_\rr |\eta_t(\lambda)|d\lambda\leq C\Big(\int_{|\lambda|>1}|\lambda\eta_t|^{p'}d\lambda\Big)^{\frac{1}{p'}}+||\lambda\eta_t||_{C^{\delta}}
\int_{|\lambda|<1}|\lambda|^{-1+\delta}d\lambda<\infty.
\end{equation}
It follows from \eqref{eq_eta_l_1} combined with Hausdorff--Young's inequality, \eqref{eq_eta_l_1_-3},  \eqref{eq_eta_l_1_-2} and  \eqref{eq_eta_l_1_-1} that 
\[
 \|\eta_t\|_{L^1(\rr)}\leq C(1+|t|)e^{\frac{3\pi|t|}{2}},
\]
and therefore, $A_t$ extends as a bounded operator on $L^\infty$ with norm
\[ 
\|A_t\|_{L^\infty(\R)\to L^\infty(\R)}\leq C(1+|t|)e^{\frac{3\pi|t|}{2}},
\]
where the constant $C>0$ is independent of $\eps$ and $t$.

Next let $B$ be the operator 
\[
\begin{gathered} 
B: C_0^\infty(\rr)\to C^\infty(\rr), \quad Bf:= (\sigma\chi^{c}_++\chi_+^a)* f
\end{gathered}
\]
which is also equal to 
\begin{equation}
\label{eq_def_B_mult}
B=\mc{F}^{-1}\mu\mc{F}
\end{equation}
 with 
\begin{equation}
\label{eq_def_B_mult_1}
\mu(\xi) :=\sigma e^{-i(c+1)\pi/2}(\xi-i0)^{-c-1}+e^{-i(a+1)\pi/2}(\xi-i0)^{-a-1},
\end{equation}
in view of \eqref{fourier_chi}.    

If $c<0$ then $\mu\in L^1_{\rm loc}(\R)\cap C^{\infty}(\R\setminus\{0\})$. Using also the fact that the distribution $(\xi-i0)^z$ is of polynomial growth when $\Re\, z>-1$, we have $\mu\hat f\in L^1(\R)$ for any $f\in C^\infty_0(\R)$.  Thus, the operator 
$B:C_0^\infty(\R)\to L^\infty(\rr)$ is bounded.  

Now if $c=0$ then $Bf:= \sigma H*f+\chi_+^a* f$, where $H$ is the Heaviside function. The fact that the convolution with the  Heaviside function maps $C^\infty_0$ functions into $L^\infty$ functions implies that the operator $B:C_0^\infty(\R)\to L^\infty(\rr)$ is bounded also in the case $c=0$.

Thus, the composition $A_tB: C_0^\infty(\R)\to L^\infty(\R)$ is bounded in all cases $c\le 0$.  We claim that 
\begin{equation}
\label{eq_prod_A_tB}
A_tBf=(\lambda-i\eps)^{b+it}* f, \quad f\in C_0^\infty(\R).
\end{equation}
Indeed, \eqref{eq_prod_A_tB} follows from   \eqref{eq_def_A_t},      \eqref{eq_def_B_mult},   and the equality 
\[
\hat{\eta_t}\mu=\mc{F}((\lambda-i\eps)^{b+it})
\]
obtained from \eqref{eq_def_A_t_eta_t_c<0}, \eqref{eq_def_A_t_eta_t_c=0} \eqref{eq_def_B_mult_1}, and \eqref{fourierx-i0}. In the case $c=0$, we also use that 
\[
\xi_-^{-b-it}(\xi-i0)^{-1}=\xi^{-b-1-it}, \quad b<0.
\]

We thus get for all $\eps>0$ and $t\in\rr$
\begin{equation}
\label{eq_non_resc_1} 
\|(\lambda-i\eps)^{b+it}* f\|_{L^\infty} \leq C(1+|t|)e^{\frac{3\pi|t|}{2}}(\|\chi^{c}_+*f\|_{L^\infty}+\|\chi^{a}*f\|_{L^\infty}).
\end{equation}
Now a scaling argument as in the proof of Lemma 3.3 of \cite{GHS} finishes the proof. Indeed,  letting $f_\tau(\lambda)=f(\tau\lambda)$, we have 
\begin{equation}
\label{eq_non_resc_2} 
\chi^z_+*f_\tau(\lambda)=\tau^{-z-1}(\chi_+^z*f)(\tau\lambda), \quad 
(\lambda-i\eps)^z*f_\tau  (\lambda)=\tau^{-z-1}((\lambda-i\tau\eps)^z*f)(\tau\lambda)
\end{equation}
for all $\tau>0$ and $z\in\cc$. It follows from \eqref{eq_non_resc_1}  and \eqref{eq_non_resc_1} that for each $\tau>0$
\[
\tau^{-b}||(\lambda-i\tau\eps)^{b+it}* f||_{L^\infty} \leq C(1+|t|)e^{\frac{3\pi|t|}{2}}(\tau^{-c}||\chi^{c}_+*f||_{L^\infty}+\tau^{-a}||\chi_+^{a}*f||_{L^\infty})
\]
and choosing $\tau:=||\chi_+^{a}*f||^{1/(a-c)}_{L^\infty}||\chi_+^{c}*f||^{-1/(a-c)}_{L^\infty}$, we obtain the desired estimate \eqref{eq_10_4_conv}. The proof of Lemma \ref{Guillarmou_Hassell_remark} is complete.

\section{Microlocal structure of the spectrally localized resolvent}

\label{app_2}

In this appendix, we analyze the microlocal structure of the spectrally localized resolvent $\phi(\frac{\Delta_g}{z})(\Delta_g-(z\pm i0))^{-1}$,  where $z>0$ and  $\phi\in C^\infty_0(((1-\delta/4)^2,(1+\delta/4)^2))$ is such that $\phi(t)=1$ for $t\in ((1-\delta/8)^2,(1+\delta/8)^2)$,  for $\delta>0$  small. In doing so, we use the notation and results established in the works \cite{GHS}, \cite{GuHaSi_III}, and \cite{HaWu2008}. 

\begin{prop}
\label{prop_B_1}
Let $\phi$ be as above. For all $\mu > 0$, the operator $\phi(\frac{\Delta_g}{\mu^2})$ is a pseudodifferential operator 
in the following senses:

(i) \emph{High energy case}. For $h = \mu^{-1} \leq 2$, the operator $\phi(h^2 \Delta_g)$ is a semiclassical scattering pseudodifferential operator with microsupport in $\{ (z, \zeta) \mid |\zeta|_g \in ((1-\delta/4)^2,(1+\delta/4)^2) \}$ where $\zeta$ is the semiclassically-rescaled cotangent variable, i.e. $\zeta_i$ is the symbol of $-i h \partial_{z_i}$. 

(ii) \emph{Low energy case}. For $\mu \in (0, 2)$, the operator $\phi(\frac{\Delta_g}{\mu^2})$ is a pseudodifferential operator in the class $\Psi^0_k(M, \Omega_{k,b}^{1/2}) + \mathcal{A}^{\mathcal{E}}(M^2_{k,b}, \Omega_{k,b}^{1/2})$ where $\mathcal{E}$ is an index family for the boundary hypersurfaces of $M^2_{k,b}$, satisfying $\mathcal{E}_{\bfo} = 0$, $\mathcal{E}_{\zf} = n$, $\mathcal{E}_{\lbo} = \mathcal{E}_{\rbo} = n/2$, $\mathcal{E}_{\lb} = \mathcal{E}_{\rb} = \mathcal{E}_{\bfc} = \infty$. That is, it is the sum of a pseudodifferential operator in the class defined in \cite[Section 5]{GuHaSi_III} and a conormal function which is smooth across the diagonal, but has nontrivial behaviour at the boundary hypersurfaces $\lbo$ and $\rbo$. 
\end{prop}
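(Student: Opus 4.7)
The plan is to use the Helffer-Sjöstrand formula
\begin{equation*}
\phi(A) = -\frac{1}{\pi}\int_{\C} \bar\partial\tilde\phi(w)\,(A-w)^{-1}\,d^2 w,
\end{equation*}
where $\tilde\phi\in C_c^\infty(\C)$ is an almost analytic extension of $\phi$, applied with $A=h^2\Delta_g$ for part (i) and $A=\mu^{-2}\Delta_g$ for part (ii). Since $\supp(\phi)\subset((1-\delta/4)^2,(1+\delta/4)^2)\subset(0,\infty)$, one may arrange $\supp(\bar\partial\tilde\phi)$ to lie in a fixed compact set $K\Subset\C\setminus[0,\infty)$, so the spectral parameter $w$ in the integrand is uniformly bounded away from $[0,\infty)\supset\mathrm{Spec}(A)$. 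Each part then reduces to a uniform structure theorem for the resolvent over $w\in K$ together with holomorphic integration in $w$.

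For part (i), in the semiclassical scattering calculus on $M$, the operator $h^2\Delta_g-w$ is elliptic of order $2$ uniformly for $w\in K$ and $h\in(0,1/2]$; a standard parametrix construction (see \cite{Vasy_Zworski_2000}) produces $(h^2\Delta_g-w)^{-1}$ as a semiclassical scattering pseudodifferential operator of order $-2$ with full symbol holomorphic in $w\in K$. Inserting into the Helffer-Sjöstrand formula, each term in the full symbol expansion of the integrand is a rational function of $w$ with poles at $|\zeta|_g^2$ (and possibly its derivatives for lower-order terms). Integration against $\bar\partial\tilde\phi$ then turns each symbolic term into a linear combination of $\phi^{(j)}(|\zeta|_g^2)$ times polynomials in the subprincipal data, all supported in $\{|\zeta|_g^2\in\supp\phi\}$. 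Since $\phi$ is Schwartz, the full expansion defines an operator of order $-\infty$ with the claimed microsupport.

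For part (ii), write $(\mu^{-2}\Delta_g-w)^{-1}=\mu^2(\Delta_g-\mu^2 w)^{-1}$. For $w\in K$ the parameter $\mu^2 w$ remains in $\C\setminus[0,\infty)$ at distance at least $c\mu^2$ from the spectrum, which is precisely the off-spectral low-energy regime analyzed in \cite{GuHaSi_III}. Their parametrix construction, followed by error removal via a Neumann series in the $k$-calculus, realizes $(\Delta_g-\mu^2 w)^{-1}$ as a polyhomogeneous element of the sum $\Psi_k^{-2}(M,\Omega_{k,b}^{1/2})+\mathcal{A}^{\mathcal{E}_0}(M^2_{k,b},\Omega_{k,b}^{1/2})$, holomorphic in $w\in K$, with infinite-order vanishing at $\lb$, $\rb$, $\bfc$; this last property is crucial and holds because $w$ is bounded away from the positive real axis, so the outgoing/incoming Legendrian contributions that appear for the limiting resolvents $(\Delta_g-(E\pm i0))^{-1}$, $E>0$, never enter. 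The extra factor $\mu^2$ raises the order of vanishing at the faces where $\mu\to 0$ — namely $\bfo$, $\zf$, $\lbo$, $\rbo$ — by the corresponding multiplicities, giving precisely the index family $\mathcal{E}$ in the statement; the combination $\mu^2\cdot\Psi_k^{-2}$ absorbs into $\Psi_k^0$. Integration in $w$ against $\bar\partial\tilde\phi$ preserves both the $\Psi_k^0$ and conormal $\mathcal{A}^{\mathcal{E}}$ parts.

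The main obstacle is verifying that the parametrix construction of \cite{GuHaSi_III} — symbolic inversion near the diagonal, normal-operator inversion at $\bfo$ and $\zf$, composition in the $k$-calculus, and Neumann-series error correction — depends holomorphically on $w\in K$ with values in a fixed polyhomogeneous class, so that the $w$-integration commutes with extraction of the leading behavior at each boundary hypersurface of $M^2_{k,b}$. Each step of that construction is manifestly analytic in $w$ since $K$ is bounded away from the characteristic set, and once this analyticity is recorded the proposition follows from the standard machinery of polyhomogeneous conormal distributions together with the $k$-calculus composition results of \cite{GuHaSi_III}.
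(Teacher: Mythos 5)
Your part (i) coincides with the paper's argument (Helffer--Sj\"ostrand plus the semiclassical scattering calculus), so that part is fine.

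For part (ii) you take a genuinely different route: you push the Helffer--Sj\"ostrand formula all the way through, using the structure of the off-spectral low-energy resolvent $(\Delta_g - \mu^2 w)^{-1}$, whereas the paper applies Helffer--Sj\"ostrand only for the piece near the diagonal and reads the off-diagonal conormal orders directly from the spectral measure via $\phi(\Delta_g/\mu^2) = \int \phi(\lambda^2/\mu^2)\, dE_{\sqrt{\Delta_g}}(\lambda)\, d\lambda$. Your strategy is plausible in principle, but as written the step where you extract the index family $\mathcal{E}$ is incorrect. You claim that the factor $\mu^2 = k^2$ in $\mu^2(\Delta_g - \mu^2 w)^{-1}$ simply raises the orders of the resolvent's index family $\mathcal{E}_0$ at $\zf$, $\lbo$, $\rbo$, $\bfo$ by the corresponding multiplicities of $k$. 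This misses the essential cancellations in the $w$-integral: for every integer $j\geq 0$ one has $\int_\C \bar\partial\tilde\phi(w)\, w^j\, d^2 w = 0$ by integration by parts and holomorphy of $w^j$, and because $\phi$ vanishes identically near $w=0$ these identities apply to all the coefficients of the resolvent expansion at the faces that are polynomial in $z=\mu^2 w$. Concretely, the leading term of $(\Delta_g - \mu^2 w)^{-1}$ at $\zf$ is the Green's function $\Delta_g^{-1}(m,m')$, which is nonvanishing and independent of $w$; multiplying by $\mu^2$ gives a term of $\zf$-order $2$, not $n$, and it is only the $w$-integration that annihilates it. The surviving contributions come from the non-analytic-in-$z$ terms of the resolvent expansion (half-integer powers of $z$ for $n$ odd, $\log z$ terms for $n$ even), which encode precisely the jump of the resolvent across $[0,\infty)$, i.e.\ the spectral measure. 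Without tracking these cancellations you would derive orders $2$ at $\zf$ and (using the resolvent orders $n-2$ at $\lbo,\rbo,\bfo$ recalled in the paper's remark) order $n$ at $\lbo,\rbo,\bfo$, none of which match $\mathcal{E}$. Once you do track the cancellations correctly, you have in effect reproduced the spectral-measure computation the paper carries out, so the spectral measure is the cleaner and in fact the necessary tool here. Your observation about rapid vanishing at $\lb$, $\rb$, $\bfc$ (no Legendrian/oscillatory contributions for $w$ off the real axis) is correct.
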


\begin{proof}
(i) This follows by expressing the operator $\phi(h^2 \Delta_g)$ using the Helffer-Sj\"ostrand formula for the self-adjoint functional calculus, 
\[
\phi(h^2\Delta_g)=\frac{1}{2\pi i}\int_{\C} \bar{\p}\tilde \phi(z) (h^2\Delta_g-z)^{-1}d\overline{z}\wedge dz,
\]
where $\tilde \phi$ is an almost holomorphic extension of $\phi$, see \cite[Theorem 8.1]{DiSj}. In terms of the notation for the spaces of semiclassical scattering pseudodifferential operators used in  \cite{Vasy_Zworski_2000}, we have  $\phi(h^2\Delta_g)\in \Psi^{-\infty,0,0}_{\text{sc},h}(M)$. 

(ii) The same argument applies to show that the operator $\phi(\frac{\Delta_g}{\mu^2})$ is pseudodifferential in a neighbourhood of the diagonal on the space $M^2_{k,sc}$. We also need to understand the behaviour of the kernel of this operator away from the diagonal. Here, we recall from \cite{GuHaSi_III} that the spectral measure is conormal and vanishes to order $n-1$ at $\zf$, order $n/2 - 1$ at $\lbo$ and $\rbo$ and order $-1$ at $\bfo$ as a b-half-density on $\MMkb$, while it is Legendrian (oscillatory) at $\lb$, $\rb$ and $\bfc$. As a result, the integral 
\begin{equation}
\phi \big(\frac{\Delta_g}{\mu^2} \big) = \int \phi \big(\frac{\lambda^2}{\mu^2}\big) dE_{\sqrt{\Delta_g}}(\lambda)d\lambda
\label{phi-Delta}\end{equation}
is conormal on $\MMkb$ and vanishes to order $n$ at $\zf$, order $n/2$ at $\lbo$ and $\rbo$, order $0$ at $\bfo$ and to order $\infty$ at $\lb$, $\rb$ and $\bfc$. 
\end{proof}

\begin{rem} The pseudodifferential nature of $\phi(h^2\Delta_g)$ can also be proved via the spectral measure using the results of \cite{GuHaSi_III}. 
Recall from this article that the spectral measure $dE_{\sqrt{\Delta_g}}(\lambda)$ for $\lambda \geq 1$ is a Legendre distribution associated to a pair of  Legendre submanifolds $(L, L_2^\#)$, where $L$ is the flowout by (left) bicharacteristic flow starting from $N^* \Diagb \cap \Sigma_l$ where $N^* \Diagb$ is  the conormal bundle to the diagonal in $M^2_b$. Here $\Sigma_l$ denotes the `left' characteristic variety  of the operator $h^2 \Delta_g - 1$, that is, the set $\{ (z, \zeta, z', \zeta') \mid |\zeta|_g = 1 \}$ where the semiclassical symbol of $h^2 \Delta_g - 1$, acting in the left variable $z$, vanishes. Being a Legendre distribution, the spectral measure may be expressed (up to a trivial kernel, that is, one that is smooth and rapidly vanishing both as $h \to 0$ and as one approaches the boundary of $M^2_b$) as a finite sum of oscillatory integrals associated to neighbourhoods of the submanifold $L$. The phase function for this oscillatory integral takes the form $\lambda \Phi$, where $\Phi$ is independent of $\lambda$. If we then integrate in the $\lambda$ variable as in \eqref{phi-Delta} (with $h = \mu^{-1}$ in the high-energy case), then it is straightforward to check that the phase function $\lambda \Phi$ parametrizes the conormal bundle to the diagonal, and the result is a semiclassical scattering pseudodifferential operator of order $0$. 
\end{rem}

\begin{rem} It is not hard to see that the operator $\phi(\frac{\Delta_g}{\mu^2})$ is microlocally equal to the identity for $|\zeta|_g \in ((1-\delta/8)^2,(1+\delta/8)^2)$, where $\zeta$ is the rescaled cotangent variable. First, the operator $\phi(\frac{\Delta_g}{\mu^2})$ is elliptic in this region.  Next, choose a function $\phi_1$ supported in the interior of the region where $\phi = 1$. Then by functional calculus, $\phi_1(\frac{\Delta_g}{\mu^2}) = \phi(\frac{\Delta_g}{\mu^2})\phi_1(\frac{\Delta}{\mu^2})$, from which it follows that $\phi(\frac{\Delta_g}{\mu^2})$ is microlocally equal to the identity on the elliptic set of $\phi_1(\frac{\Delta_g}{\mu^2})$, which is an arbitrary subset of $\{ (z, \zeta) \mid |\zeta|_g\in ((1-\delta/8)^2,(1+\delta/8)^2)\}$.
\end{rem}

We next consider the microlocal structure of the spectrally localized resolvent. 

\begin{prop}
\label{prop_app_B_2}
The microlocal structure of the operator $\phi(\frac{\Delta_g}{z}) ( \Delta_g - (z \pm i0))^{-1}$, $z>0$, is as follows:

(i) High energy case. Here we use semiclassical notation and write $z = h^{-2}$. The operator $\phi(h^2 \Delta_g) (h^2 \Delta_g - (1 \pm i0))^{-1}$, acting on half-densities, lies in the same microlocal space as the semiclassical resolvent as detailed  in \cite[Theorem 1.1]{HaWu2008}, indeed in a `better'  space as the differential order is $-\infty$ rather than $-2$. 
That is, the spectrally localized resolvent is a sum of three terms $S_1 + S_2 + S_3$, where 
\begin{itemize}
\item $S_1$ is a semiclassical pseudodifferential operator of differential order $-\infty$ and semiclassical order $0$,

\item $S_2$ is an intersecting Legendre distribution associated to the conormal bundle $N^* \Diagb$ and to the propagating Legendrian $L$, and

\item $S_3$ is a conic Legendre pair associated to $L$ and to the outgoing Legendrian $L_2^\#$. 
\end{itemize}
Moreover, $S_2 + S_3$ are microlocally identical to the full resolvent in a neighbourhood of the characteristic variety $\Sigma_l$ of $h^2 \Delta_g - 1$. 

(ii) Low energy case. Let $z \in (0, 2)$. The operator $\phi(\frac{\Delta_g}{z}) (\Delta_g - (z \pm i0))^{-1}$, acting on half-densities, lies in the same microlocal space as the resolvent as detailed in \cite[Theorem 3.9]{GuHaSi_III}, indeed in a better space as the differential order is $-\infty$ rather than $-2$. In detail, the operator $\phi(\frac{\Delta_g}{z}) (\Delta_g - (z \pm i0))^{-1}$ can be decomposed as $S_1 + S_2 + S_3 + S_4$ (with $\sqrt{z}$ playing the role of the spectral parameter on $M^2_{k,b}$) where 
\begin{itemize}
\item $S_1 \in \Psi^{-\infty}(M, \Omega_{k,b}^{1/2})$ is a pseudodifferential operator of order $-\infty$ in the calculus of operators defined in \cite{GHS};

\item $S_2 \in I^{-1/2,\mcB}(\MMkb, (\Nscstar\Diagb, L^{\bfc}_+); \Omegakbh)$ is an intersecting Legendre distribution on $\MMkb$, microsupported close to $\Nscstar\Diagb$;

\item $S_3 \in I^{-1/2,(n-2)/2; (n-1)/2, (n-1)/2; \mcB}(\MMkb, (L^{\bfc}_+, \Lsharp_+); \Omegakbh)$ is a Legendre distribution on $\MMkb$ associated to the intersecting pair of Legendre submanifolds with conic points $(L^{\bfc}_+, \Lsharp_+)$, microsupported away from $\Nscstar\Diagb$;

\item $S_4$ is supported away from $\bfc$ and is such that $e^{\pm i \lambda r} e^{\pm i \lambda r'} R_4$ is
polyhomogeneous conormal  on $\MMkb$.

\end{itemize}

Here $\mcB = (\mcB_{\bfo}, \mcB_{\lbo}, \mcB_{\rbo}, \mcB_{\zf})$ is an index family with minimal exponents (i.e.\ order of vanishing) $\min \mcB_{\bfo} = -2$, $\min \mcB_{\lbo} = \min \mcB_{\rbo} = n/2 - 2$, $\min \mcB_{\zf} = 0$.  In addition $S_4$ vanishes to order $\infty$ at $\lb$ and $\bfc$ and to order $(n-1)/2$ at $\rb$. 

\end{prop}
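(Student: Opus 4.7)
\medskip

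\noindent\textbf{Proof proposal.} The overall strategy is to write the spectrally localized resolvent as the composition
\[
\phi\bigl(\tfrac{\Delta_g}{z}\bigr)(\Delta_g-(z\pm i0))^{-1} = \phi\bigl(\tfrac{\Delta_g}{z}\bigr)\cdot (\Delta_g-(z\pm i0))^{-1},
\]
and to appeal to the known microlocal structure of the full outgoing/incoming resolvent, combined with the pseudodifferential nature of $\phi(\Delta_g/z)$ established in Proposition~\ref{prop_B_1}. The key geometric observation is that the microsupport of $\phi(\Delta_g/z)$ is contained in a small neighbourhood of the characteristic variety $\Sigma_l = \{|\zeta|_g = 1\}$ (in the semiclassical sense, with $h=z^{-1/2}$), while on a slightly smaller neighbourhood $\phi$ is microlocally the identity. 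Since the propagating Legendrian $L$, the outgoing Legendrian $L_2^\#$, and (in the low-energy case) $L^{\bfc}_+$ and $\Lsharp_+$ all lie over $\Sigma_l$, the oscillatory/Legendrian components of the resolvent are preserved by the composition, while the elliptic (purely pseudodifferential) component acquires an improved differential order.

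For part (i), write the full resolvent as in \cite[Theorem~1.1]{HaWu2008} in the form $R_1 + R_2 + R_3$, where $R_1 \in \Psi_{\SC,h}^{-2,0,0}(M)$ is a semiclassical scattering pseudodifferential operator of differential order $-2$, $R_2$ is an intersecting Legendre distribution associated to $(N^*\Diagb, L)$, and $R_3$ is a conic Legendre pair associated to $(L,L_2^\#)$. Composing on the left with $\phi(h^2\Delta_g) \in \Psi_{\SC,h}^{-\infty,0,0}(M)$ and using the standard composition formula in the semiclassical scattering calculus produces $S_1 = \phi(h^2\Delta_g) R_1$ which lies in $\Psi_{\SC,h}^{-\infty,0,0}(M)$, since the differential order is the sum of the differential orders of the factors. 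Composition of a pseudodifferential operator with an intersecting Legendre distribution (respectively a conic Legendre pair) preserves the Legendrian class, as established in the calculus of \cite{GHS} and \cite{HaWu2008}. Moreover, since $\phi$ equals one microlocally on a neighbourhood of $\Sigma_l$ that contains $L$ and $L_2^\#$ (in the high energy regime, these Legendrians project to points of $\Sigma_l$), the principal symbols of $S_2$ and $S_3$ on $L$ and $L_2^\#$ match those of $R_2$ and $R_3$, which gives the statement that $S_2 + S_3$ is microlocally identical to the full resolvent near $\Sigma_l$.

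For part (ii), the argument is parallel but uses the low-energy calculus on $\MMkb$. We decompose the full resolvent $R_{\pm}(\sqrt{z})$ as $R_1'+R_2'+R_3'+R_4'$ as in \cite[Theorem~3.9]{GuHaSi_III}, and compose on the left with $\phi(\Delta_g/z)$ written in the form $\Psi_k^0 + \mathcal{A}^{\mathcal{E}}$ of Proposition~\ref{prop_B_1}(ii). The composition preserves the intersecting Legendre structure of $S_2$ and the intersecting Legendre pair with conic points structure of $S_3$ for the same reason as above; the differential order of the diagonal singularity drops from $-2$ to $-\infty$ because the differential order of $\phi(\Delta_g/z)$ is $-\infty$. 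The index sets at the boundary hypersurfaces of $\MMkb$ are controlled by the pushforward/pullback theorem of Melrose applied to the composition, using that the index family for $\phi(\Delta_g/z)$ has $\mathcal{E}_{\bfo}=0$, $\mathcal{E}_{\lbo}=\mathcal{E}_{\rbo}=n/2$, $\mathcal{E}_{\zf}=n$, $\mathcal{E}_{\bfc}=\infty$, and by combining these with the index family of the resolvent via the standard composition rules at each boundary face.

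The main obstacle will be the careful bookkeeping in the low-energy case: one must verify that the composition formula at each boundary face of $\MMkb$ (in particular at $\bfo$, $\lbo$, $\rbo$, $\zf$) yields the claimed index family $\mathcal{B}$, and that the infinite-order vanishing of $\phi(\Delta_g/z)$ at $\lb$, $\rb$, $\bfc$ transfers to $S_4$ vanishing to order $\infty$ at $\lb$ and $\bfc$. The preservation of the Legendrian structure under composition with a pseudodifferential operator whose microsupport meets the Legendrian transversally at the correct locus is essentially contained in \cite{GHS} and \cite{GuHaSi_III}; the only genuinely new point is that the elliptic/pseudodifferential order improves from $-2$ to $-\infty$ due to the compactly supported spectral cutoff, which is immediate from symbol composition. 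Parts (i) and (ii) then follow by assembling these pieces.
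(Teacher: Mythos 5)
Your strategy is the same as the paper's: decompose both the spectral cutoff and the resolvent into pieces, compose term by term, and observe the improvement of the differential order from $-2$ to $-\infty$ coming from the compact spectral support. Part (i) is essentially complete and agrees with the paper, which also first arranges $R_1$ to be microsupported away from $\Sigma_l$ and $R_2, R_3$ to be microsupported near it, so that $\phi(h^2\Delta_g)(R_2+R_3) = R_2 + R_3$ up to a residual kernel; your phrasing in terms of symbol matching amounts to the same thing.

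In the low-energy case, however, there is a genuine gap at exactly the place you flag as ``the main obstacle'' and then leave unresolved. When you compose the off-diagonal, smooth piece of $\phi(\Delta_g/z)$ --- what the paper calls $B_2$, i.e.\ the $\mathcal{A}^{\mathcal{E}}$ summand of Proposition~\ref{prop_B_1}(ii) --- with the oscillatory Legendrian pieces $R_3$ and $R_4$ of the resolvent, you cannot simply invoke ``standard composition rules at each boundary face'' or Melrose's pushforward theorem: $R_3 + R_4$ is \emph{not} polyhomogeneous conormal at $\bfc$, $\lb$, $\rb$, so the pushforward theorem for polyhomogeneous kernels does not apply to the lifted product on $M^3_{k,b}$ as it stands. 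The paper resolves this by two observations that are absent from your sketch: (a) the rapid vanishing of $B_2$ at $\bfc$ and $\rb$ (the middle faces of the triple space) kills the oscillations in the middle variable, so only the oscillations in the rightmost variable of $R_3+R_4$ survive; and (b) after multiplying by $e^{\mp i\lambda r'}$, the kernel $(R_3 + R_4)e^{\mp i\lambda r'}$ becomes polyhomogeneous at $\rb$, so the product of the lifted kernels on $M^3_{k,b}$ \emph{is} polyhomogeneous and the pushforward theorem applies, yielding $e^{\mp i\lambda r'}$ times a polyhomogeneous kernel with the stated vanishing orders. Without these two steps your index-family bookkeeping cannot get off the ground. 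The composition of $B_2$ with the near-diagonal part $R_1 + R_2$, and of the near-diagonal part $B_1$ of $\phi$ with the full resolvent, go through exactly as you say, following \cite[Section 5]{GHS}.
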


\begin{cor} \label{cor:spec-localized-est}
The estimates \eqref{1stcase}, \eqref{2ndcase}, \eqref{1stcaselow} and \eqref{2ndcaselow} hold if the resolvent $(\Delta_g - (z \pm i0))^{-1}$ is replaced by the spectrally localized resolvent $\phi(\Delta_g/z) (\Delta_g - (z \pm i0))^{-1}$.
\end{cor}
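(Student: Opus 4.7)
The plan is to prove the corollary via the operator identity
\[
\phi(\Delta_g/z)(\Delta_g - (z \pm i0))^{-1} = (\Delta_g - (z \pm i0))^{-1} - z^{-1}\psi(\Delta_g/z),
\]
where $\psi(t) := (1 - \phi(t))/(t-1)$ is a bounded smooth function on $[0, \infty)$ that vanishes identically on the neighborhood of $t = 1$ where $\phi \equiv 1$ and decays like $t^{-1}$ as $t \to \infty$. The first term on the right-hand side satisfies the bounds \eqref{1stcase}--\eqref{2ndcaselow} directly from Proposition \ref{prop:partition of unity}, so the whole problem reduces to verifying the same bounds for $Q_i(\eta)^* z^{-1}\psi(\Delta_g/z) Q_j(\eta)$.

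The key observation is that $\psi(\Delta_g/z)$ admits precisely the same microlocal analysis as $\phi(\Delta_g/z)$ carried out in Proposition \ref{prop_B_1}, since that analysis uses only that $\phi$ is smooth and bounded, via the Helffer--Sj\"ostrand formula. Thus in the high-energy regime (writing $h = z^{-1/2}$), $\psi(h^2 \Delta_g)$ is a semiclassical scattering pseudodifferential operator with microsupport contained in $\{|\zeta|_g^2 \notin ((1-\delta/8)^2,(1+\delta/8)^2)\}$, hence strictly disjoint from the characteristic set $\Sigma_l = \{|\zeta|_g^2 = 1\}$; in the low-energy regime, $\psi(\Delta_g/z)$ decomposes as an element of $\Psi_k^{-\infty}(M, \Omegakbh)$ plus a conormal piece on $M^2_{k,b}$ with the same index family $\mathcal{E}$ as in Proposition \ref{prop_B_1}(ii).

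In the high-energy cases (2.i)--(2.ii), the partition elements $Q_i(\eta), Q_j(\eta)$ are microsupported in a uniform neighborhood of $\Sigma_l$ that is strictly smaller than the spectral window on which $\phi \equiv 1$ (one may always shrink $\delta$ in the construction of \cite{GHS} to guarantee this). Disjointness of semiclassical wavefront sets and standard composition in the scattering calculus then give $Q_i(\eta)^* \psi(h^2\Delta_g) Q_j(\eta) \in h^\infty x^\infty {x'}^\infty C^\infty(\bbar M \times \bbar M)$, and multiplication by $z^{-1} \sim h^2$ yields exactly the right-hand side of \eqref{1stcase}--\eqref{2ndcase}. The low-energy cases (3.i)--(3.ii) proceed analogously in the calculus of \cite{GuHaSi_III} on $M^2_{k,b}$: wavefront-set disjointness at $\bfc$ annihilates the Legendrian singularity there, while the boundary behavior at $\bfo, \zf, \lbo, \rbo$ is controlled by the index family attached to $\psi$.

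The principal obstacle is the careful bookkeeping of boundary orders in the low-energy case, where the factor $z^{-1}\sim \lambda^{-2}$ degrades at $\bfo$. One must verify that the polyhomogeneous vanishing of $\psi(\Delta_g/z)$ at $\lbo$ and $\rbo$ (of order $n/2$, per Proposition \ref{prop_B_1}(ii)), together with the support properties of the partition elements and the cutoffs $\chi(x/\lambda) + \chi(x'/\lambda)$ on the right-hand side of \eqref{1stcaselow}--\eqref{2ndcaselow}, supply enough compensating vanishing to dominate the pointwise bound once $z^{-1}$ is inserted. This reduces to explicit comparison computations analogous to those in \cite[Section 6]{GuHa} used to establish \eqref{1stcaselow}--\eqref{2ndcaselow} for the full resolvent, requiring no new analytic ingredients beyond the microlocal structure of $\psi(\Delta_g/z)$ already produced by the proof of Proposition \ref{prop_B_1}.
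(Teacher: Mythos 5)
Your argument takes a genuinely different route from the paper's. The paper first establishes Proposition~\ref{prop_app_B_2}, which analyzes the microlocal structure of $\phi(\Delta_g/z)(\Delta_g - (z\pm i0))^{-1}$ directly as a composition of the pseudodifferential operator $\phi(\Delta_g/z)$ (described in Proposition~\ref{prop_B_1}) with the full resolvent in the semiclassical scattering and low-energy $b$-calculi, and then observes that the proofs of \eqref{1stcase}--\eqref{2ndcaselow} only use wavefront set location and boundary vanishing orders, which Proposition~\ref{prop_app_B_2} shows are unchanged or improved under spectral localization. You instead invoke the algebraic identity
\[
\phi(\Delta_g/z)(\Delta_g - (z\pm i0))^{-1} = (\Delta_g - (z\pm i0))^{-1} - z^{-1}\psi(\Delta_g/z), \qquad \psi(t) = \frac{1-\phi(t)}{t-1},
\]
which is correct (the $\pm i0$ is harmless since $\phi\equiv 1$ near $t=1$ kills the singularity) and reduces the problem from a Legendrian composition to the study of a single spectral multiplier $\psi(\Delta_g/z)$ with no characteristic-variety wavefront set. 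This is a legitimate and arguably more elementary reduction, and it buys you the elimination of most of the work in Proposition~\ref{prop_app_B_2}. Two points deserve care, however. First, the claim that \emph{all} $Q_i(\eta)$ occurring in cases (2.i)--(2.ii) are microsupported in a neighbourhood of $\Sigma_l$ strictly inside the spectral window is not literally true, since $\sum_i Q_i = \Id$ forces at least one piece to be elliptic away from $\Sigma_l$; the cleaner observation is that pairs $(i,j)$ falling in (2.i)/(2.ii) necessarily have disjoint microsupports (otherwise the pseudodifferential part of $Q_i^*(\Delta_g-z)^{-1}Q_j$ would already violate the residual bound), and then $Q_i(\eta)^*\psi(h^2\Delta_g)Q_j(\eta)$ is residual by standard pseudodifferential composition regardless of where $\psi$ is microsupported. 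Second, $\psi$ is not compactly supported --- it decays only like $t^{-1}$ as $t\to\infty$ --- so the statement that $\psi(\Delta_g/z)$ has ``precisely the same'' boundary index family as $\phi(\Delta_g/z)$ from Proposition~\ref{prop_B_1}(ii) does not transfer verbatim; the integral $\int\psi(\lambda^2/z)\,dE_{\sqrt{\Delta_g}}(\lambda)\,d\lambda$ extends to all $\lambda$ and the decay of $\psi$ must be used, which can shift the orders at $\bfo$ and $\zf$. You correctly single out the low-energy boundary bookkeeping as the remaining obstacle; it is genuinely where the work is, and the proof is incomplete without it, but the overall strategy is sound and different in kind from what the paper does.
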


\begin{proof}[Proof of Corollary~\ref{cor:spec-localized-est}] The proofs of these estimates only used the location of the wavefront set of the resolvent kernel, together with the vanishing orders of the resolvent on the boundary hypersurfaces of $M^2_{k,b}$ at $z = 0$. In view of Proposition~\ref{prop_app_B_2}, the same proof applies verbatim to the spectrally localized resolvent.
\end{proof}

\begin{proof}[Proof of Proposition~\ref{prop_app_B_2}]

(i) We study the composition of the operator $\phi(h^2 \Delta_g)$ with the incoming or outgoing resolvent, $(h^2 \Delta_g - (1 \pm i0))^{-1}$.  We know from \cite[Theorem 1.1]{HaWu2008} that the actual resolvent can be decomposed into a sum of three terms $R_1 + R_2 + R_3$
as in the proposition (except that $R_1$ will have differential order $-2$). 
We may assume that $R_2$ and $R_3$ are microsupported in the region  where $|\zeta|_g \in ((1-\delta/8)^2,(1+\delta/8)^2)$, and $R_1$ is  microsupported in the region where $|\zeta|_g \notin ((1-\delta/16)^2,(1+\delta/16)^2)$. The composition $S_1 := \phi(h^2 \Delta_g) R_1$ is another semiclassical pseudodifferential operator, of semiclassical order $0$ and differential order $-\infty$. On the other hand,  the operator $\phi(h^2 \Delta_g)$ is microlocally equal to the identity on the microsupport of $R_2$ and $R_3$, so using \cite[Section 7]{GHS}, we find that the composition of $\phi(h^2 \Delta_g)$ with $R_2 + R_3$ is equal to $R_2 + R_3$ up to an operator that is residual in all senses, that is, a smooth kernel that vanishes rapidly as $h \to 0$ or upon approach to the boundary of $M^2_b$. So we can take $S_2 = R_2$ and $S_3 = R_3$ up to a residual kernel. 

(ii) Similarly, in the low energy case the actual resolvent has a decomposition into $R_1 + R_2 + R_3 + R_4$ having properties as in the proposition (with $R_1$ of differential order $-2$). 
We also need to decompose the operator $\phi(\frac{\Delta_g}{z}) = B_1 + B_2$ into two parts, where $B_1$ is supported close to the diagonal on the space $M^2_{k,b}$, and $B_2$ has  empty wavefront set. This second piece $B_2$ can be taken to vanish to infinite order at $\bfc$, $\lb$ and $\rb$, and to be polyhomogeneous conormal to $\bfo, \lbo, \rbo$ and $\zf$ vanishing to order $0$ at $\bfo$, $n/2$ at $\lbo$ and $\rbo$ and order $n$ at $\zf$.  When we apply $B_1$ to the resolvent, the argument is just as in the high energy case, using \cite[Section 5]{GHS} instead of \cite[Section 7]{GHS}. 

To understand what happens when we apply $B_2$ to the resolvent, we view the composition of operators as pushforward of the product of the Schwartz kernels on a `triple space' $M^3_{k,b}$ down to $M^2_{k,b}$, as was done in the appendix of \cite{GuHa2008}. As a multiple of a nonvanishing b-half-density on $M^2_{k,b}$ we find that $B_2$ (multiplied by $|dk/k|^{1/2}$, $k=\sqrt{z}$, which is a purely formal factor) is polyhomogeneous conormal, with no log terms at leading order,  and vanishes to order $n$ at zf, $0$ at $\bfo$ and $n/2$ at $\lbo$ and $\rbo$. On the other hand, we can decompose the resolvent kernel as the sum of  $R_1 + R_2$, supported near the diagonal, and $R_3 + R_4$, which is microsupported in the set where  $|\zeta|_g \in ((1-\delta/8)^2,(1+\delta/8)^2)$, where $\zeta$ is the cotangent variable rescaled by a factor $\sqrt{z}$.

The composition of $B_2$ with $R_1+R_2$ can be treated by lifting both kernels to the space $M^3_{k,b}$ and pushing forward. Since $B_2$  has no wavefront set, the composition has no wavefront set, so it is polyhomogeneous conormal, and the order of vanishing can be read off as $n$ at $\zf$, $n/2$ at $\lbo$, $n/2 - 2$ at $\rbo$, $-2$ at $\bfo$, and $\infty$ at $\lb, \rb$ and $\bfc$. 
This lies in a better space than claimed in the proposition. 

The composition of $B_2$ with $R_3 + R_4$ can also be analyzed by lifting both kernels to $M^3_{k,b}$ and then pushing forward. Although $R_3 + R_4$ is not polyhomogeneous conormal at the boundary hypersurfaces $\bfc$, $\lb$ and $\rb$, when lifted to $M^3_{k,b}$ and multiplied by the lift of $B_2$, the rapid vanishing of $B_2$ at $\bfc$ and $\rb$ means that the product of the two kernels is rapidly decreasing as the `middle variable' (the right variable of $B_2$ and the left variable of $R_3 + R_4$) tends to the boundary. As for the right variable of $R_3 + R_4$, after multiplying the kernel of $R_3 + R_4$ by  $e^{\mp i\lambda r'}$ (where $r' = 1/x'$ is the right radial variable)  it becomes polyhomogeneous conormal also at $\rb$. So the product of the kernels $B_2$ (in the left and middle variables) and $(R_3 + R_4) e^{\mp i\lambda r'}$ (in the middle and right variables) on $M^3_{k,b}$ \emph{is} polyhomogeneous conormal. After pushing forward to $M^2_{k,b}$ a calculation similar to that done in \cite[Appendix]{GuHa2008} shows that the result is $e^{\mp i\lambda r'}$ times a polyhomogeneous kernel which vanishes to order $n-2$ at $\zf$, $-2$ at $\bfo$, $\min(n/2, n-2)$ at $\lbo$, $n/2 - 2$ at $\rbo$, $(n-1)/2$ at $\rb$ and $\infty$ at $\lb$ and $\bfc$, with no log terms to leading order except possibly at $\lbo$ in the case $n=4$. Again this is in a better space than is claimed in the proposition. This completes the proof. 
  
\end{proof}
  
\end{appendix}

\section*{Acknowledgements}

We are grateful to the referees for their very helpful comments and suggestions, 
and we thank Julien Sabin and Adam Sikora for useful discussions. 
This project has received funding from the European Research Council (ERC) under the European Union’s Horizon 2020 research and innovation programme (grant agreement No. 725967)
The research of C.G. was also partially supported by ANR grants 13-BS01-0007-01.  
A.H.  acknowledges the support of the Australian Research Council through Discovery Grants DP150102419 and DP160100941. 
The research of K.K. is partially supported by the National Science Foundation(DMS 1500703, DMS 1815922). C.G. finally thanks the hospitality of the Mathematical Sciences Institute at ANU where part of this work was done, and ARC grant DP160100941 for supporting the visit.


\begin{thebibliography}{0}

\bibitem{Abramov_Aslanyan_Davies}
Abramov, A., Aslanyan, A., Davies, E. B., \emph{Bounds on complex eigenvalues and resonances},  
J. Phys. A \textbf{34} (2001), no. 1, 57--72.



\bibitem{Borichev_Golinskii_Kupin}
Borichev, A.,  Golinskii, L.,  Kupin, S., \emph{A Blaschke-type condition and its application to complex Jacobi matrices}, 
Bull. Lond. Math. Soc. \textbf{41} (2009), no. 1, 117--123.

\bibitem{Bogli_Sabine}
B\"ogli, S., \emph{Schr\"odinger operator with non-zero accumulation points of complex eigenvalues}, Comm. Math. Phys. \textbf{352} (2017), no. 2, 629--639.

\bibitem{Ch} J. Cheeger, \emph{Spectral geometry of singular Riemannian spaces}. 
J. Differential Geom. \textbf{18} (1983) no 4, 575--657.

\bibitem{CT} J. Cheeger and M. Taylor, \emph{On the diffraction of waves by conical singularities. I, II}, 
Comm. Pure App. Math. \textbf{35} (1982), 275--331, 487--529. 

\bibitem{Chen}
Chen, X., \emph{Stein-Tomas restriction theorem via spectral measure on metric measure spaces}, Math. Zeitschrift \textbf{289} (2018), no3-4, pp 829--835. 

\bibitem{Cordes} H. Cordes, \emph{A Global Parametrix for Pseudodifferential Operators over with Applications}, SFB 72 Preprints, Bonn 1976. 

\bibitem{Cuenin}
Cuenin, J. C., \emph{Eigenvalue bounds for Dirac and fractional Schr\"odinger operators with complex potentials}, J. Funct. Anal. \textbf{272} (2017), no. 7, 2987--3018.

\bibitem{Cuenin_Kenig}
Cuenin, J. C., Kenig, C., \emph{$L^p$  resolvent estimates for magnetic Schr\"odinger operators with unbounded background fields}, Comm. Partial Differential Equations \textbf{42} (2017), no. 2, 235--260. 

\bibitem{Demuth_Hansmann_Katriel_2009}
Demuth, M.,  Hansmann, M.,  Katriel, G., \emph{On the discrete spectrum of non--selfadjoint operators},  J. Funct. Anal. \textbf{257} (2009), no. 9, 2742--2759. 

\bibitem{Demuth_Hansmann_Katriel_LT}
 Demuth, M.,  Hansmann, M.,  Katriel, G.,  \emph{Lieb--Thirring type inequalities for Schr\"odinger operators with a complex-valued potential}, Integral Equations Operator Theory \textbf{75} (2013), no. 1, 1--5. 
 
 \bibitem{Demuth_Hansmann_Katriel_2app}
Demuth, M.,  Hansmann, M.,  Katriel, G.,  \emph{Eigenvalues of non--selfadjoint operators: a comparison of two approaches}, Mathematical physics, spectral theory and stochastic analysis, 107--163, Oper. Theory Adv. Appl., 232, Birkh\"auser/Springer Basel AG, Basel, 2013. 
 


\bibitem{DiSj}
Dimassi, M.,  Sj\"ostrand, J.,  \emph{Spectral asymptotics in the semi-classical limit}, London Mathematical Society Lecture Note Series, 268. Cambridge University Press, Cambridge, 1999. 


\bibitem{Enblom_A}
Enblom, A.,  \emph{Estimates for eigenvalues of Schr\"odinger operators with complex-valued potentials},  Lett. Math. Phys. \textbf{106} (2016), no. 2, 197--220. 


\bibitem{FaKreVe}
Fanelli, L.,  Krejcirik, D.,   Vega, L., \emph{Spectral stability of Schr\"odinger operators with subordinated complex potentials},   J. Spectr. Theory., to appear. 

\bibitem{Frank_2011} 
Frank, R., \emph{Eigenvalue bounds for Schr\"odinger operators with complex potentials},  Bull. Lond. Math. Soc. \textbf{43} (2011), no. 4, 745--750.

\bibitem{Frank_2015} 
Frank, R., \emph{Eigenvalue bounds for Schrödinger operators with complex potentials. III.}, 
Trans. Amer. Math. Soc. \textbf{370} (2018), no. 1, 219 -- 240.


\bibitem{Frank_Laptev_Lieb_Seiringer}
Frank, R., Laptev, A.,  Lieb, E.,  Seiringer, R., \emph{Lieb--Thirring inequalities for Schr\"odinger operators with complex-valued potentials}, Lett. Math. Phys. \textbf{77} (2006), no. 3, 309--316. 

\bibitem{Frank_Sabin} 
Frank, R., Sabin, J.,  \emph{Restriction theorems for orthonormal functions, Strichartz inequalities, and uniform Sobolev estimates},  Amer. J. of Math., \textbf{139} (2017), no. 6, 1649 --1691. 

\bibitem{Frank_Simon} 
Frank, R., Simon, B., \emph{Eigenvalue bounds for Schr\"odinger operators with complex potentials. II}, J. Spectr. Theory \textbf{7} (2017), no. 3, 633 --658. 

\bibitem{FrHi} R. Froese, P. Hislop, \emph{Spectral analysis of second-order elliptic operators on noncompact manifolds}, Duke Math Journal \textbf{58} (1989), no 1, 103--129.

\bibitem{GaGa} Garcia-Cuerva, J.,  Gatto, A. E., \emph{Boundedness properties of fractional integral operators associated to n-doubling measures}, Studia Mathematica \textbf{162} (2004), 245--261.



\bibitem{Gohberg_Krein}
 Gohberg, I.,  Krein, M.,  \emph{Introduction to the theory of linear nonselfadjoint operators}, Translations of Mathematical Monographs, Vol. 18 American Mathematical Society, Providence, R.I. 1969

\bibitem{Gr}  Grigor'yan, A.,  \emph{Gaussian upper bounds for the heat kernel on arbitrary manifolds}, 
J. Diff. Geom. \textbf{45} (1997), 33--52.


\bibitem{GuHa2008}
Guillarmou, C., Hassell, A., \emph{Resolvent at low energy and Riesz transform for Schr\"odinger operators on asymptotically conic manifolds. I.},  Math. Ann. \textbf{341} (2008), no. 4, 859--896.


\bibitem{GuHa} Guillarmou, C.,  Hassell, A.,  \emph{Uniform Sobolev estimates for non-trapping metrics},
Journal Inst. Math. Jussieu, \textbf{13}, Issue 3, (2014), 599-632.



\bibitem{GHS} Guillarmou, C. Hassell, A.,  Sikora, A.,  \emph{Restriction and spectral multiplier theorems on asymptotically conic manifolds.}
Analysis and PDE 6 (2013), no 4, 893-950.

\bibitem{GuHaSi_III}
Guillarmou, C.,  Hassell, A., Sikora, A., \emph{Resolvent at low energy III: The spectral measure},  Trans. Amer. Math. Soc. \textbf{365} (2013), no. 11, 6103--6148. 

\bibitem{HTW}
Hassell, A.,  Tao, T.,  Wunsch, J., \emph{Sharp Strichartz estimates on nontrapping asymptotically conic manifolds}, 
Amer. J. Math. \textbf{128} (2006), no. 4, 963--1024. 




\bibitem{HaVa} Hassell, A., Vasy, A.,  \emph{The resolvent for Laplace-type operators on asymptotically conic spaces}, Ann. Inst. Fourier (Grenoble) \textbf{51} (2001) no.5, 1299--1346.

\bibitem{HaWu2008}
Hassell, A., Wunsch, J., \emph{The semiclassical resolvent and the propagator for non-trapping scattering metrics}, Adv. Math. \textbf{217} (2008), no. 2, 586--682. 

\bibitem{HaZh} Hassell, A.,   Zhang, J.,  \emph{Global-in-time Strichartz estimates on non-trapping asymptotically conic manifolds}, Anal. PDE \textbf{9} (2016) 151--192. 

\bibitem{Hormander_books_1}
H\"ormander, L.,  \emph{The analysis of linear partial differential operators. I. Distribution theory and Fourier analysis},  Classics in Mathematics. Springer-Verlag, Berlin, 2003. 


\bibitem{Ho3} H\"ormander, L.,  \emph{The analysis of linear partial differential operators. III. Pseudodifferential operators}, Grundlehren der Mathematischen Wissenschaften, 274. Springer-Verlag, Berlin, 1985. 

\bibitem{Keller_61}
Keller, J., \emph{Lower bounds and isoperimetric inequalities for eigenvalues of the Schr\"odinger equation},  J. Mathematical Phys. \textbf{2} (1961),  262--266.

\bibitem{Kenig_Ruiz_Sogge}
Kenig, C., Ruiz, A.,  Sogge, C., \emph{Uniform Sobolev inequalities and unique continuation for second order constant coefficient differential operators}, Duke Math. J. \textbf{55} (1987), no. 2, 329--347. 

\bibitem{Laptev_Safronov}
Laptev, A., Safronov, O., \emph{Eigenvalue estimates for Schr\"odinger operators with complex potentials},  
Comm. Math. Phys. \textbf{292} (2009), no. 1, 29--54. 




\bibitem{Lieb_Thirring}
Lieb, E. H., Thirring, W., \emph{Inequalities for the moments of the eigenvalues of the Schr\"odinger Hamiltonian and their relation to Sobolev inequalities}, Studies in Mathematical Physics. Princeton University Press, Princeton, 1976, 269--303. 


\bibitem{McCarthy}
 McCarthy, C., $c_p$,  Israel J. Math. \textbf{5} (1967), 249--271.

\bibitem{Melrose}
Melrose, R., \emph{Spectral and scattering theory for the Laplacian on asymptotically Euclidian spaces}, Spectral and Scattering Theory (Sanda 1992), Dekker, New York, 1994, pp. 85--130.

\bibitem{Melrose_Zworski}
Melrose, R., Zworski, M.,  \emph{Scattering metrics and geodesic flow at infinity}, Invent. Math. \textbf{124} (1996), 389--436. 

\bibitem{Mizutani_critical}
Mizutani, H., \emph{Uniform Sobolev estimates for Schr\"odinger operators with scaling-critical potentials and applications}, preprint 2016, \textsf{https://arxiv.org/abs/1609.03253}. 



\bibitem{Mizutani}
Mizutani, H., \emph{Eigenvalue bounds for non-self-adjoint Schr\"odinger operators with the inverse-square potential}, preprint 2016, \textsf{https://arxiv.org/abs/1607.01727}. 

\bibitem{Parenti}  C. Parenti, \emph{Operatori pseudo-differenziali in $\RR^n$ e applicazioni}, Ann. Mat. Pura Appl. (4) \textbf{93} (1972), 359--389. 

\bibitem{RoTa}  I. Rodnianski, T. Tao, \emph{Effective limiting absorption principles, and applications}, Comm. Math. Phys. \textbf{333} (2015), no. 1, 1--95. 

\bibitem{Sambou}
Sambou, D.,  \emph{Lieb--Thirring type inequalities for non-self-adjoint perturbations of magnetic Schr\"odinger operators},
J. Funct. Anal. \textbf{266} (8) (2014), 5016--5044.


\bibitem{Schrohe}  E. Schrohe, \emph{Spaces of weighted symbols and weighted Sobolev spaces on manifolds}, Pseudodifferential operators (Oberwolfach, 1986), 360--377, Lecture Notes in Math., 1256, Springer, Berlin, 1987.




\bibitem{Simon_trace}
 Simon, B.,  \emph{Trace ideals and their applications},  London Mathematical Society Lecture Note Series, 35. Cambridge University Press, Cambridge-New York, 1979.
 

 
 
\bibitem{Simon_operator_theory} 
Simon, B., \emph{Operator theory. A Comprehensive Course in Analysis, Part 4.},  American Mathematical Society, Providence, RI, 2015.   

\bibitem{Sommerfeld} A. Sommerfeld, \emph{Mathematische Theorie der Diffraction}, Math. Ann. \textbf{47} (1896), no. 2-3, 317--374. 


\bibitem{Va} Varopoulos, N., \emph{Hardy-Littlewood Sobolev for semigroups,} 
J. Funct. Anal. \textbf{63} (1985) no 2, 240--260.
 

\bibitem{Vasy_Zworski_2000}
Vasy, A., Zworski, M., \emph{Semiclassical estimates in asymptotically Euclidean scattering}, Comm. Math. Phys. \textbf{212} (2000), no. 1, 
205--217. 


\end{thebibliography}
\end{document}